\newcommand\ab{\mathsf{a}}
\newcommand\Aff{\text{\sf Aff}}
\newcommand\al{\alpha}
\newcommand\AND{\quad\text{and}\quad}
\newcommand\bd{\partial}
\newcommand\DL{\mathsf{DL}}
\newcommand\dist{\mathsf{d}}
\newcommand\Ex{\mathsf{E}}
\newcommand\gab{\boldsymbol\gamma}
\newcommand\gb{\mathbf{g}}
\newcommand\gf{\mathfrak{g}}
\newcommand\hb{\mathbf h}
\newcommand\Hb{{\mathbb H}}
\newcommand\Hc{{\mathcal H}}
\newcommand\Hf{{\mathfrak H}}
\newcommand\hor{\wt\pi}
\newcommand\HT{\operatorname{\sf HT}}
\newcommand\im{\mathfrak{i}\,}
\newcommand\kb{\mathbf k}
\newcommand\Lap{\mathfrak L}
\newcommand\la{\lambda}
\newcommand\Mart{\mathcal{M}}
\newcommand\ms{\mathbf m}
\newcommand\of{\mathfrak{o}}
\newcommand\Prob{\mathsf{Pr}}
\newcommand\pb{\mathbf p}
\newcommand\pp{\mathsf{p}}
\newcommand\qq{\mathsf{q}}
\newcommand\R{\mathbb{R}}
\newcommand\Sol{\text{\sf Sol}}
\newcommand\uf{\mathfrak{u}}
\newcommand\uno{\mathbf{1}}
\newcommand\vf{\mathfrak{v}}
\newcommand\wh{\widehat}
\newcommand\wt{\widetilde}
\newcommand\xf{\mathfrak{x}}
\newcommand\Xf{\mathfrak{X}}
\newcommand\yf{\mathfrak{y}}
\newcommand\zf{\mathfrak{z}}
\newcommand\Zf{\mathfrak{Z}}
\newcommand\Z{\mathbb Z}
\numberwithin{equation}{section}
\newtheoremstyle{mythm}
  {9pt}
  {9pt}
  {\itshape}
  {0pt}
  {\bfseries}
  {}
  { }
  {\thmnumber{(#2)}\thmname{ #1}\thmnote{ #3}}
\newtheoremstyle{mydef}
  {9pt}
  {9pt}
  {\normalfont}
  {0pt}
  {\bfseries}
  {}
  { }
  {\thmnumber{(#2)}\thmname{ #1}\thmnote{ #3}}
\theoremstyle{mythm}
\newtheorem{thm}[equation]{Theorem.}
\newtheorem{pro}[equation]{Proposition.}
\newtheorem{lem}[equation]{Lemma.}
\newtheorem{cor}[equation]{Corollary.}
\theoremstyle{mydef}
\newtheorem{rmk}[equation]{Remark.}
\begin{document}$\,$ \vspace{-1truecm}
\title{Brownian motion and Harmonic functions \\
on $\Sol(\pp,\qq)$}
\author{{\bf Sara BROFFERIO,
Maura SALVATORI, and Wolfgang WOESS}
\\ $\,$\\
{\normalfont Dedicated to Alessandro Fig\`a-Talamanca \\ on the occasion of his 70th
birthday}
}
\address{\parbox{.8\linewidth}{Sara Brofferio\\
Laboratoire de Math\'ematiques\\ 
Universit\'e de Paris-Sud b\^at 425\\  
91405 Orsay Cedex, France\\}}
\email{sara.brofferio@math.u-psud.fr}
\address{\parbox{.8\linewidth}{Maura Salvatori\\
Dipartimento di Matematica\\ 
Universit\`a degli Studi di Milano\\ 
Via Saldini, 50\\  
20133 Milano, Italy\\}}
\email{maura.salvatori@unimi.it}
\address{\parbox{.8\linewidth}{Wolfgang Woess\\
Institut f\"ur Mathematische Strukturtheorie,\\ 
Technische Universit\"at Graz,
Steyrergasse 30, 8010 Graz, Austria\\}}
\email{woess@TUGraz.at}\date{\today}
\subjclass[2010] {58J65, 
31C12, 
60J50
}
\keywords{Sol-group, hyperbolic plane, horocyclic product, Laplacian, 
Brownian motion, central limit theorem, rate of escape, boundary,
positive harmonic functions}

\begin{abstract}
The Lie group $\Sol(\pp,\qq)$ is the semidirect product 
induced by the action of $\R$ on $\R^2$ which is given by 
$(x,y) \mapsto (e^{\pp z}x,e^{-\qq z}y)$, $z \in \R$. Viewing 
$\Sol(\pp,\qq)$ as a 3-dimensional manifold, it carries a natural Riemannian metric
and Laplace-Beltrami operator. We add a linear drift term in the $z$-variable
to the latter, and study the associated Brownian motion with drift. We derive
a central limit theorem and compute the rate of escape. Also, we introduce
the natural geometric compactification of $\Sol(\pp,\qq)$ and explain how
Brownian motion converges almost surely to the boundary in the resulting
topology. We also study all positive harmonic functions for the Laplacian
with drift, and determine explicitly all minimal harmonic functions.
All this is carried out with a strong emphasis on understanding and using
the geometric features of $\Sol(\pp,\qq)$, and in particular the fact that 
it can be described as the horocyclic product of two hyperbolic planes with 
curvatures $-\pp^2$ and $-\qq^2$, respectively. 
\end{abstract}

\maketitle

\markboth{{\sf S. Brofferio, M. Salvatori, and W. Woess}}
{{\sf $\Sol(\pp,\qq)$}}
\baselineskip 15pt


\section{Introduction}\label{sect:intro}
$\Sol(\pp,\qq)$ is the group of all matrices of the form
\begin{equation}\label{eq:solmatrix}
\gf = \begin{pmatrix} e^{\pp z} & x & 0         \\
	0         & 1 & 0         \\
	0         & y & e^{-\qq z}
\end{pmatrix}\,,\quad x, y, z \in \R\,.
\end{equation}
The parameters $\pp$ and $\qq$ are positive real numbers.
It will be useful to think separately of $\Sol(\pp,\qq)$ as a Lie group and
as a manifold. In the latter situation, we shall often write $\zf = (x,y,z)$
or also $\xf$ or $\yf$ for its elements, instead of $\gf$.
Its length element is
$$
ds^2 = d_{\pp,\qq}s^2 = e^{-2\pp z}\,dx^2 + e^{2\qq z}\,dy^2 + dz^2\,,
$$
which is invariant under the left action of $\Sol(\pp,\qq)$
on itself as an isometry group. If we identify the element $\gf$ of
\eqref{eq:solmatrix} with $(x,y,z)$, then $\Sol(\pp,\qq)$ is $\R^3$
topologically (but of course not metrically). In those coordinates,
the group product is
\begin{equation}\label{eq:product}
(a,b,c) \cdot  (x,y,z) = \bigl( e^{\pp c}x + a, e^{-\qq c}y + b, c+z\bigr)\,.
\end{equation}
The purpose of this case study is to describe the behaviour of Brownian
motion in space and time, and to determine all positive harmonic functions
on $\Sol(\pp,\qq)$ with respect to its Laplace-Beltrami operator and
the variant where a ``vertical'' drift term (in $z$) is added to the latter.
More precisely, we shall derive a central limit theorem for Brownian motion
with drift, describe convergence of this process to the natural geometric
boundary at infinity, and we shall determine all positive eigenfunctions
of those Laplacians. The experienced reader will know how intimately such
stochastic and potential theoretic features are linked with each other.

Before we can explain the results, we need some details. 
%
To start, let $\Hb = \{ x + \im w : x \in \R\,,\; w > 0 \}$ be
hyperbolic upper half plane with the standard length element
$w^{-2}(dx^2 + dw^2)$. We can pass to the logarithmic model by substituting
$z = \log w\,$, and in those coordinates the length element becomes
$e^{-2z}dx^2 + dz^2$. Now we also change curvature by modifying the length
element into
$$
ds^2 = d_{\pp}s^2 = e^{-2\pp z}\,dx^2+ dz^2\,.
$$
We write $\Hb(\pp)$ for the hyperbolic plane with this parametrization and
metric. Then we have the natural projections
\begin{equation}\label{eq:projections}
\begin{aligned}
&\pi_1: \Sol(\pp,\qq) \to \Hb(\pp)\,,\quad (x,y,z) \mapsto (x,z)\\
&\pi_2: \Sol(\pp,\qq) \to \Hb(\qq)\,,\quad (x,y,z) \mapsto (y,-z)\,.
\end{aligned}
\end{equation}
The \emph{horocycle at level $z$} in $\Hb(\pp)$ is the set
$\{(x,z) : x \in \R\}$, and we write $\hor(x,z) = z$. Thus, we get another
natural projection $\hor: \Hb(\pp) \to \R$. We also consider $\hor$ as a
projection of $\Sol(\pp,\qq)$ onto $\R$, where $\hor(x,y,z)=z$. We shall 
write $\dist$ for each of the metrics induced by the respective length
elments; it will usually be evident from the context to which of the
underlying spaces this refers -- or else, that space will appear in the index.
(On $\R$ we then have $\dist_{\R}(z_1,z_2) = |z_1-z_2|$.)
Note that our projections preserve distances in the following sense:
\begin{equation}\label{eq:preservedist}
\begin{aligned}
\dist_{\Sol}\bigl((x,y_1,z_1),(x,y_2,z_2)\bigr)
&= \dist_{\Hb(\qq)}\bigl((y_1,-z_1),(y_2,-z_2)\bigr)\,,\\
\dist_{\Sol}\bigl((x_1,y,z_1),(x_2,y,z_2)\bigr)
&= \dist_{\Hb(\pp)}\bigl((x_1,z_1),(x_2,z_2)\bigr)\,,\AND\\
\dist_{\Sol}\bigl((x,y,z_1),(x,y,z_2)\bigr) &= |z_1-z_2|\,.
\end{aligned}
\end{equation}
A main structural feature is that the manifold $\Sol(\pp,\qq)$ is made up
by two hyperbolic planes (with respective curvatures $-\pp^2$ and $-\qq^2$) that are
glued together by identifying opposite horocycles: it can be seen as the
\emph{horocyclic product} of $\Hb(\pp)$ and $\Hb(\qq)$,
\begin{equation}\label{eq:horpro}
\Sol(\pp,\qq) = \{ (\uf, \vf) \in \Hb(\pp) \times \Hb(\qq) :
\hor(\uf) + \hor(\vf) = 0\}\,,
\end{equation}
with its metric arising naturally from those two hyperbolic planes.

We remark here that there are various different types of horocyclic
products. $\Sol(\pp,\qq)$ has two sister structures. One is the
\emph{Diestel-Leader graph} $\DL(\pp,\qq)$, which is the horocyclic
product of two regular trees with degrees $\pp+1$ and $\qq+1$, respectively,
where $\pp, \qq \ge 2$ are integer. One of its interesting features is
that when $\pp=\qq$, it is a Cayley graph of the \emph{lamplighter group}
$(\Z/\pp\Z) \wr \Z$. Random walks and harmonic functions on $\DL(\pp,\qq)$ have
been studied intensively by {\sc Bertacchi}~\cite{Be},
{\sc Woess}~\cite{Wo-lamp}, {\sc Bartholdi and Woess}~\cite{BaWo} and
{\sc Brofferio and Woess}~\cite{BrWo1}, \cite{BrWo2}.
The other sister structure is
\emph{treebolic space} $\HT(\pp,\qq)$, which is the horocyclic product of
$\Hb(\pp)$ and the tree with degree $\qq+1$, where $\pp > 0$ (real) and
$\qq \ge 2$ (integer). When $\pp=\qq$, the Baumslag-Solitar group
$\langle a, b \mid ab = b^{\qq}a \rangle$ acts on $\HT(\qq,\qq)$ with
compact quotient. The study of potential theory and Brownian motion
on treebolic space is harder than on $\Sol$ and on $\DL$ (where random walk
replaces Brownian motion), first of all because of the conceptual and technical difficulty
in constructing the right Laplacian(s) on the 2-dimensional complex
$\HT$. This is ongoing work of {\sc Bendikov, Saloff-Coste, Salvatori and
Woess}~\cite{BSSW1}, \cite{BSSW2}.

Brownian motion and random walks on $\Sol(1,1)$ made a brief appearance
in the work of {\sc Lyons and Sullivan}~\cite{LySu}. Harmonic functions for random
walks on $\Sol(1,1)$ also appear in {\sc Raugi}~\cite[Exemple 2, p. 677]{Ra}.

$\HT$, $\DL$ and $\Sol$ are also objects of great interest in relation
with geometric group theory. Quasi-isometries of those spaces have been
studied by {\sc Farb and Mosher}~\cite{FaMo} (for $\HT(\pp,\pp)$) and by
{\sc Eskin, Fisher and Whyte}~\cite{EFW1}, \cite{EFW2} (for $\DL$ and $\Sol$).
The last two papers also contain a good description of several aspects
of the geometry of $\Sol$.

The \emph{Laplace operator} with \emph{vertical drift parameter} $\ab \in \R$
on $\Sol(\pp,\qq)$ is
\begin{equation}\label{eq:lap-sol}
\Lap_{\ab} = \Lap_{\ab}^{\Sol(\pp,\qq)}
= \frac12\left(e^{2\pp z}\,\frac{\bd^2}{\bd x^2} + e^{-2\qq z}\,\frac{\bd^2}{\bd y^2}
+ \frac{\bd^2}{\bd z^2}\right) + \ab \,\frac{\bd}{\bd z}\,.
\end{equation}
The Laplace-Beltrami operator arises for $\ab = (\qq-\pp)/2$.

As a matter of fact, this involves a small abuse of terminology:
in differential geometry, the ``true'' Laplace-Beltrami operator
would be \emph{twice} the one which we are using.
Here, we are following the probabilistic habits: with the factor $\frac12$,
in the standard Euclidean situation, the Laplacian is the infinitesimal
generator of standard Brownian motion. The situation is similar here.

Under the projection $\pi_1$, the operator $\Lap_a$ projects onto the
operator on $\Hb(\pp)$ given by
\begin{equation}\label{eq:lap1-hyp}
\Lap^{\Hb(\pp)}_{\ab} = \frac12\left(e^{2\pp z}\,\frac{\bd^2}{\bd x^2}
+ \frac{\bd^2}{\bd z^2}\right) + \ab \,\frac{\bd}{\bd z}\,.
\end{equation}
By ``projects'' we mean that for a $C^2$-function $f_1$ on $\Hb(\pp)$, one
(obviously) has $\Lap_{\ab} (f_1\circ\pi_1) = (\Lap^{\Hb(\pp)}_{\ab} f_1)\circ\pi_1\,$.
This is the Laplace-Beltrami operator on $\Hb(\pp)$ when $\ab = -\pp/2$.

Analogously, under the projection $\pi_2$ (where the sign of $z$ is changed),
$\Lap_{\ab}$ projects onto the operator on $\Hb(\qq)$ given by
\begin{equation}\label{eq:lap2-hyp}
\Lap^{\Hb(\qq)}_{-\ab} = \frac12\left(e^{2\qq z}\,\frac{\bd^2}{\bd y^2}
+ \frac{\bd^2}{\bd z^2}\right) - \ab \,\frac{\bd}{\bd z}\,.
\end{equation}

And finally, $\Lap_{\ab}$ projects under $\hor$ onto the operator on $\R$ given by
\begin{equation}\label{eq:lap-tilde}
\wt\Lap_{\ab} = \frac12\,\frac{d^2}{d z^2} + \ab \,\frac{d}{d z}\,.
\end{equation}

Coming back to the outline of the contents of this paper, some
basic preliminaries are laid out in \S \ref{sect:basic}.
Our first, probabilistic object of study is then Brownian motion with drift
$\Zf_t=(X_t, Y_t, Z_t)_{t > 0}$ on $\Sol(\pp,\qq)$, i.e., the diffusion
process whose infinitesimal generator is $\Lap_{\ab}\,$.
The projections of $\Zf_t$ on $\Hb(\pp)$, $\Hb(\qq)$
and $\R$ are the diffusions whose infinitesimal generators are
the respective projected operators defined above.

\smallskip

In \S \ref{sect:clt}, we describe this process in terms of stochastic
integrals and first derive a central limit theorem for $(X_t,Y_t, Z_t)$.
Combining this with estimates from \S 2 for the metric of $\Sol$,
we also obtain a central limit theorem for $\dist(\Zf_t, \of)$. Its form for
the case $\ab =0$ is somewhat different from what happens for $\ab \ne 0$.
As a corollary, we get the linear rate of escape:
$$
\frac{\dist(\Zf_t, \of)}{t} \to |\ab| \quad\text{almost surely, as\;}
t \to \infty\,,
$$
where $\of = (0,0,0)$.
This is the same as the rate of escape for the projected (``vertical'')
Brownian motion
with drift $(Z_t)_{t > 0}$ on $\R$, so that the lateral motion in the
$x$-and $y$-variables does not contribute to that rate.

\smallskip

Since the $\Sol$-group has exponential growth, our process is always transient,
that is, with probability $1$ it eventually leaves each compact set.
\S \ref{sect:boundary} adds more details to the description of how our process
tends to infinity in space. Namely, $\Sol(\pp,\qq)$ has a natural
geometric compactification: since $\Sol(\pp,\qq)$ is a subset of the product
of two hyperbolic planes (or equivalently, hyperbolic disks),
it embeds naturally into the product of two closed unit disks, and the
closure of $\Sol(\pp,\qq)$ in this
bi-disk is the compactification. Topologically, the resulting boundary at infinity
has the shape of
a filled number ``8'', that is, two full closed disks glued together at a 
single (glueing) point . 
It is not a ``visibility'' boundary:  neither the glueing point nor any of the
interior points of the two disks come up as a limit of some geodesic ray in $\Sol$; 
all the other boundary points \emph{are} limits of geodesic rays.

It is a rather straighforward, but nevertheless
informative task to verify that Brownian motion tends almost surely in the
topology of that compactification to a limit random variable that lives
on the boundary at infinity of $\Sol(\pp,\qq)$. If $\ab = 0$ then
this limit is the glueing point deterministically. Otherwise,
that limit random variable lies on one of the two circles that make
up the ``8'' (not their interiors) and its distribution is continuous. 
Thus, when $\ab \ne 0$,
we (almost surely) have the geodesic ray from the origin to the random 
limit point. If $\gab = \bigl(\gab(t)\bigr)_{t \ge 0}$ is that limit geodesic,
then we show that for $\ab \ne 0$, the deviation of $\Zf_t$ from that
ray is at most logarithmic, namely
$$
\limsup_{t \to \infty} \dist(Z_t, \gab)/\log t \le 2/|\ab| 
\quad\text{almost surely.}
$$
This result comprises the analogous one for Brownian motion with drift 
on the hyperbolic plane, where the bound is $1/|\ab|$. For the latter,
we are not aware of a proof that has appeared in print, but there is a correponding
theorem for random walks on free groups, resp. trees, that was 
first shown by {\sc Ledrappier}~\cite{Le}; see 
{\sc Woess}~\cite[Thm. 9.59]{Wo-markov} 
for a general and simple proof. 

\smallskip

The second main body of this
work concerns positive harmonic functions.
These are  the positive $C^2$-functions that are anihilated
by the respective Laplacian. We can also handle positive eigenfunctions.

We start in \S \ref{sect:potential} by displaying some of the potential
theoretic, resp. analytic ingredients that are needed. Then we prove
in \S \ref{sect:main} that
every positive $\Lap_{\ab}$-eigenfunction on $\Sol(\pp,\qq)$ has the
form
$$
h(x,y,z) = h_1(x,z) + h_2(y,-z)\,,
$$
where $h_1$ is a non-negative $\Lap^{\Hb(\pp)}_{\ab}$-eigenfunction on
$\Hb(\pp)$ and
$h_2$ is non-negative $\Lap^{\Hb(\qq)}_{-\ab}$-eigenfunction on $\Hb(\qq)$,
both with the same eigenvalue as $h$.

This decomposition is not unique, but we can also see
where non-uniqueness comes from, namely, harmonic functions that only depend on
the ``height'' $z$.
What we do is indeed to describe all minimal positive eigenfunctions,
based on ideas from the discrete setting of Diestel-Leader graphs,
see \cite{BrWo2}.

Since the positive $\Lap_{\ab}^{\Hb(\pp)}$-eigenfunctions are known
explicitly as integrals of modified Poisson kernels,
the above result leads to
a complete description of all positive $\Lap_{\ab}$-eigenfunctions
on $\Sol(\pp,\qq)$.
Thus, the positive eigenfunctions of the Laplacian on $\Sol(\pp,\qq)$
can be described fully in terms of (modified) Poisson kernels on
each of the two hyperbolic planes that make up our space.

The computations undertaken here are related with the study of Martin
compactifications of symmetric spaces, although the group $\Sol(\pp,\qq)$
embeds into this context only when $\pp=\qq$. The reader is referred to
the book by {\sc Guivarc'h, Ji and Taylor}~\cite{GJT} and the survey
by {\sc Kaimanovich}~\cite{Ka} plus the references given there. In particular,
we get close to answering the question of {\sc Lyons and Sullivan}~\cite{LySu}
to determine the Martin boundary of $\Sol$; we find the minimal boundary
and have a clear idea what the Martin compactification has to be.

We want to underline that the main spirit of this paper is to
study the outlined issues via strong use of the geometry of
$\Sol(\pp,\qq)$ in terms of the two hyperbolic planes and their
horocyclic product.
\\[5pt]
{\bf Acknowledgements.}
We warmly thank M. Yor, who suggested to the first of the three of us
the approach used for proving the central limit via stochastic
integration. Our own approach would have been random walk based, following
the spirit of {\sc Grincevi\v cjus}~\cite{Gr}. Also, we thank A. Grigoryan
for precious input regarding the Harnack inequality used in
Proposition \ref{pro:harnack}. Finally, we acknowledge instructive
hints by V. A. Kaimanovich concerning the literature.

\section{Basic facts}\label{sect:basic}

The first part of this section contains some basic facts regarding
$\Sol(\pp,\qq)$ that are quite straightforward. They are included here
for the sake of the completeness of the picture; most proofs are omitted.

\begin{lem}\label{lem:HaarSol} 
The Riemannian volume element of the $\Sol$-manifold is 
$$
d\zf = e^{(\qq-\pp)z}\, dx \, dy \, dz\,.
$$
This is also the left Haar measure of $\Sol(\pp,\qq)$ as a group. 
The modular function on this group is
$\;\Delta(\gf) = e^{(\qq-\pp)\,\hor(\gf)}\,$,
where $\gf$ is parametrized by $(x,y,z)$ as in \eqref{eq:solmatrix} and
$\hor(\gf)=z$. The group is unimodular if and only if $\pp = \qq$.
\end{lem}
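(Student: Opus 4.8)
The plan is to compute the three claimed quantities by direct calculation from the definitions, organizing the work around the group structure and the Riemannian metric given in the introduction.

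\medskip

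\emph{Riemannian volume element.} First I would recall that for a Riemannian metric written as $ds^2 = \sum_{i,j} g_{ij}\,du^i\,du^j$, the volume element is $\sqrt{\det(g_{ij})}\; du^1\,du^2\,du^3$. Here the length element $d_{\pp,\qq}s^2 = e^{-2\pp z}\,dx^2 + e^{2\qq z}\,dy^2 + dz^2$ is already diagonal in the coordinates $(x,y,z)$, so the metric tensor is $\operatorname{diag}(e^{-2\pp z},\,e^{2\qq z},\,1)$. Its determinant is $e^{-2\pp z}\cdot e^{2\qq z}\cdot 1 = e^{2(\qq-\pp)z}$, whose square root is $e^{(\qq-\pp)z}$. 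This immediately gives $d\zf = e^{(\qq-\pp)z}\,dx\,dy\,dz$. This step is entirely routine.

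\medskip

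\emph{Left Haar measure.} Next I would verify that this same measure is left-invariant under the group action. Using the product formula \eqref{eq:product}, left translation by a fixed element $(a,b,c)$ sends $(x,y,z)$ to $(e^{\pp c}x + a,\; e^{-\qq c}y + b,\; c+z)$. I would compute the Jacobian of this map in the $(x,y,z)$ variables: the map is affine, with the $x$-coordinate scaled by $e^{\pp c}$, the $y$-coordinate scaled by $e^{-\qq c}$, and the $z$-coordinate merely translated, so the Jacobian determinant is $e^{\pp c}\cdot e^{-\qq c} = e^{(\pp-\qq)c}$. Then I would check that $e^{(\qq-\pp)z}\,dx\,dy\,dz$ pulls back correctly: under the substitution the new height is $z' = c+z$, contributing a factor $e^{(\qq-\pp)(c+z)}$, and combining this with the Jacobian factor $e^{(\pp-\qq)c}$ the $c$-dependence cancels, leaving $e^{(\qq-\pp)z}\,dx\,dy\,dz$. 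This confirms left-invariance, and since the Riemannian volume is automatically invariant under the left isometric action already asserted in the introduction, the two measures must agree up to a constant, which the explicit calculation pins down exactly.

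\medskip

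\emph{Modular function and unimodularity.} Finally, for the modular function I would perform the analogous computation for right translation. The right action of $(a,b,c)$ sends $(x,y,z)$ to the product $(x,y,z)\cdot(a,b,c) = (e^{\pp z}a + x,\; e^{-\qq z}b + y,\; z+c)$; here the crucial difference is that the scaling factors $e^{\pp z}$ and $e^{-\qq z}$ depend on the variable point $z$ rather than on the fixed translating element, so right translation does not simply rescale coordinates but acts as a shear in $z$. I would compute how the left Haar measure transforms under this right translation and read off the modular function $\Delta$ as the factor relating $dx\,dy\,dz$ before and after; the expected answer is $\Delta(\gf) = e^{(\qq-\pp)\hor(\gf)} = e^{(\qq-\pp)z}$. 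Unimodularity then holds precisely when $\Delta \equiv 1$, i.e. when $\qq - \pp = 0$, giving the stated criterion $\pp = \qq$. The one point demanding a little care is getting the conventions straight, for instance which variable the exponential factor depends on in right versus left translation, and ensuring the sign of the exponent in $\Delta$ matches the standard convention that $\Delta$ relates left and right Haar measures; this bookkeeping is the only genuine obstacle, and it is a minor one.
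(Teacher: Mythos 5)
Your proposal is correct. The paper gives no proof of this lemma at all (Section 2 explicitly says that most proofs of these basic facts are omitted as routine), and your computation is precisely the standard argument the authors intend: the volume form as $\sqrt{\det(g_{ij})}\,dx\,dy\,dz = e^{(\qq-\pp)z}\,dx\,dy\,dz$, left-invariance because the Jacobian $e^{(\pp-\qq)c}$ of left translation cancels the shift $e^{(\qq-\pp)(c+z)}$ in the density, and the modular function from right translation $(x,y,z)\mapsto(x+e^{\pp z}a,\,y+e^{-\qq z}b,\,z+c)$, whose differential is a unipotent shear with Jacobian determinant $1$, so that the left Haar measure of a right-translated set picks up exactly the factor $e^{(\qq-\pp)c}=e^{(\qq-\pp)\hor(\gf)}$, giving unimodularity if and only if $\pp=\qq$. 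The only step worth writing out explicitly, which you only gesture at, is that the shear has Jacobian $1$; once that is stated, the bookkeeping you flag as the sole obstacle is indeed trivial.
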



Next, consider the group $\Aff(\pp)$ of all matrices of the form
\begin{equation}\label{eq:affmatrix}
\begin{pmatrix} e^{\pp z} & x         \\
	0         & 1
\end{pmatrix}\,,\quad x, z \in \R\,.
\end{equation}
This is nothing but the group of orientation preserving \emph{affine
transformations} of hyperbolic plane, again parametrized by the
logarithmic model and substituting the habitual upper left term  $e^z$
with $e^{\pp z}$. We can identify the group $\Aff(\pp)$ with the surface 
$\Hb(\pp)$ in the same way as we identified $\Sol$ as a group with
$\Sol$ as a manifold. By left multiplication, $\Aff(\pp)$ acts isometrically
on $\Hb(\pp)$. We recall the following. 

\begin{lem}\label{lem:HaarAff} \emph{(a)}
The Riemannian area element of $\Hb(\pp)$ is $e^{-\pp z}\, dx \, dz\,$.
This is also the left Haar measure on the group $\Aff(\pp)$, and the modular 
function on $\Aff(\pp)$ is
$\Delta(\gf) = e^{\pp\,\hor(\gf)}\,,$
where
$\gf =\bigl(\begin{smallmatrix} e^{\pp z} & x \\[1pt] 0 & 1\end{smallmatrix}\bigr)$
and $\hor(\gf)=z$.
\end{lem}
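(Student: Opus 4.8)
The plan is to handle the three assertions in turn, the first two essentially together, since the point is that the left-invariant Riemannian volume \emph{is} a left Haar measure. For the area element I would just read off the metric tensor from the length element $d_{\pp}s^2 = e^{-2\pp z}\,dx^2 + dz^2$ of $\Hb(\pp)$: in the coordinates $(x,z)$ it is diagonal with entries $e^{-2\pp z}$ and $1$, so its determinant is $e^{-2\pp z}$ and the Riemannian area element is $\sqrt{\det}\;dx\,dz = e^{-\pp z}\,dx\,dz$. This disposes of the first claim.

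For the identification with left Haar measure the quickest route is conceptual. By construction $d_{\pp}s^2$ is invariant under the left action of $\Aff(\pp)$ on $\Hb(\pp)$, as recalled just before the statement; hence the associated Riemannian volume form is left-invariant, so it is a left Haar measure, and by uniqueness of Haar measure up to a positive constant it is \emph{the} left Haar measure. To keep things self-contained I would also verify left-invariance by hand: writing the group law in the $(x,z)$-coordinates as $(a,c)\cdot(x,z) = (e^{\pp c}x + a,\; c+z)$, the two-dimensional analogue of \eqref{eq:product}, the left translation $L_{(a,c)}$ has Jacobian $e^{\pp c}$, and under the substitution the density transforms as $e^{-\pp(c+z)}\cdot e^{\pp c} = e^{-\pp z}$, so $e^{-\pp z}\,dx\,dz$ is preserved.

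For the modular function I would compute the effect of \emph{right} translation on this left Haar measure. Right translation by $\gf_0 = (a,c)$ is $(x,z)\mapsto (x,z)\cdot(a,c) = (x + a\,e^{\pp z},\; z+c)$, a shear in the $x$-direction together with a shift in $z$; its Jacobian is therefore identically $1$, while the density picks up the factor $e^{-\pp(z+c)}/e^{-\pp z} = e^{-\pp c}$. Thus right translation scales the left Haar measure by $e^{-\pp\,\hor(\gf_0)}$, and reading off the modular function (in the convention that makes the corresponding formula in Lemma \ref{lem:HaarSol} correct, namely $\Delta = d\mu_L/d\mu_R$) yields the exponent $\pp\,\hor(\gf)$, coming exactly from the single affine scaling $e^{\pp z}$ in the $x$-direction, just as the exponent $\qq-\pp$ for $\Sol$ combines the two scalings $e^{\pp z}$ and $e^{-\qq z}$.

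The only real obstacle is bookkeeping with conventions. The two standard definitions of $\Delta$ -- as the Radon--Nikodym derivative of right Haar measure with respect to left Haar measure, or as its reciprocal -- produce exponents of opposite sign, so the sign in $\Delta(\gf) = e^{\pp\,\hor(\gf)}$ must be pinned down by insisting on consistency with Lemma \ref{lem:HaarSol}; I would state explicitly which convention is in force before quoting the formula, since the bare computation above produces the scaling factor $e^{-\pp\,\hor(\gf_0)}$ under right translation. Everything else is the elementary triangular-Jacobian calculation, so once the convention is fixed there is nothing further to overcome.
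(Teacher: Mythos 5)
The paper itself offers no proof of Lemma \ref{lem:HaarAff}: it is prefaced by ``We recall the following'', and the opening of \S\ref{sect:basic} announces that most proofs in that section are omitted. So your proposal can only be judged on its own terms, and up to the last step it is correct: the area element $e^{-\pp z}\,dx\,dz$ read off from the metric tensor, the identification with left Haar measure (both via the isometric left action and via the Jacobian check with the group law $(a,c)\cdot(x,z)=(e^{\pp c}x+a,\,c+z)$), and the key computation that right translation by $\gf_0=(a,c)$ has Jacobian $1$ and multiplies $\mu_L=e^{-\pp z}\,dx\,dz$ by the factor $e^{-\pp\,\hor(\gf_0)}$ are all accurate.

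The genuine gap is the final sign reconciliation, which is false and contradicts your own computation one sentence earlier. The factor by which right translation scales $\mu_L$ \emph{is} the Radon--Nikodym derivative $d\mu_L/d\mu_R$: the right Haar measure of $\Aff(\pp)$ is Lebesgue measure $dx\,dz$ (right translations have Jacobian $1$ and leave a constant density invariant), so $d\mu_L/d\mu_R=e^{-\pp z}$, not $e^{+\pp z}$. Moreover, this \emph{is} the convention under which Lemma \ref{lem:HaarSol} holds: on $\Sol(\pp,\qq)$, right translation likewise has Jacobian $1$ and scales $e^{(\qq-\pp)z}\,dx\,dy\,dz$ by exactly $e^{(\qq-\pp)\hor(\gf_0)}$, matching the stated modular function there. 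Hence the convention that validates Lemma \ref{lem:HaarSol} forces $\Delta_{\Aff(\pp)}(\gf)=e^{-\pp\,\hor(\gf)}$, whereas the formula $e^{+\pp\,\hor(\gf)}$ corresponds to the opposite convention $\Delta=d\mu_R/d\mu_L$ (equivalently $\Delta(\gf)=|\det\mathrm{Ad}(\gf)|$), under which the $\Sol$ formula would come out as $e^{(\pp-\qq)\hor(\gf)}$. In other words, the two lemmas as printed in the paper carry mutually inconsistent sign conventions, so your strategy of pinning down the sign ``by consistency with Lemma \ref{lem:HaarSol}'' cannot succeed; what that consistency actually pins down is the exponent $-\pp\,\hor(\gf)$. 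A correct write-up should fix one convention explicitly and then either state the $\Aff(\pp)$ formula as $e^{-\pp\,\hor(\gf)}$ or note that the lemma's printed sign presupposes the reciprocal convention.
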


We can interpret the projections $\pi_1$ and $\pi_2$ of \eqref{eq:projections}
as homomorphisms from the group $\Sol(\pp,\qq)$ onto $\Aff(\pp)$ and
$\Aff(\qq)$, respectively. In the same way, $\hor$ is a homomorphism onto
the additive group $\R$.

\begin{lem}\label{lem:reversible}
\emph{(a)} The Laplacian $\Lap_{\ab}$ on $\Sol(\pp,\qq)$ is reversible
(self-adjoint) with respect to the measure
$$
\ms_{\ab}(d\zf)= e^{(2\ab + \pp - \qq)z}\,d\zf = e^{2\ab z} \, dx\,dy\,dz\,  .
$$
\emph{(b)} The Laplacian $\Lap^{\Hb(\pp)}_{\ab}$ on $\Hb(\pp)$ is reversible
with respect to the measure
$$
e^{2\ab z} \, dx\,dz\,.
$$
\emph{(c)} The Laplacian $\wt\Lap_{\ab}$ on $\R$ is reversible
with respect to the measure
$$
e^{2\ab z} \, dz\,.
$$
\end{lem}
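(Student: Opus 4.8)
The plan is to establish reversibility through the standard divergence-form criterion. Recall that any second-order operator written in \emph{divergence form}
$L = \rho^{-1}\operatorname{div}\!\bigl(\rho\, A\, \nabla\,\cdot\,\bigr)$, with a symmetric coefficient matrix field $A$, is symmetric with respect to the measure $\rho\, dx$ (Lebesgue measure weighted by the density $\rho$): for $f,g \in C_c^\infty$ a single integration by parts gives $\int (Lf)\,g\,\rho\,dx = -\int (A\nabla f)\cdot\nabla g\,\rho\,dx$, which is manifestly symmetric in $f$ and $g$ once one uses $A = A^{\!\top}$. The boundary terms vanish because each of the three underlying spaces is topologically a Euclidean space ($\R^3$, $\R^2$, $\R$) and the test functions are compactly supported. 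So the entire content of the lemma reduces to exhibiting each of the three operators in this divergence form with the claimed density.

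Concretely, I would take $\rho = e^{2\ab z}$ in \emph{all three} cases, and read off the diagonal matrix $A$ from the second-order part: for (a) I set $A = \tfrac12\operatorname{diag}\!\bigl(e^{2\pp z},\,e^{-2\qq z},\,1\bigr)$ in the coordinates $(x,y,z)$; for (b), $A = \tfrac12\operatorname{diag}\!\bigl(e^{2\pp z},\,1\bigr)$ in $(x,z)$; and for (c) the scalar $A = \tfrac12$. The verification is then one short computation done once and specialized: the $x$- and $y$-entries of $\rho A$ are independent of $x$ and $y$ respectively, so differentiating them produces no first-order terms and yields back the factors $\tfrac12 e^{2\pp z}\partial_x^2$ and $\tfrac12 e^{-2\qq z}\partial_y^2$; the $z$-entry $\tfrac12 e^{2\ab z}$, upon applying $\partial_z$ and dividing by $\rho$, produces exactly $\tfrac12\partial_z^2 + \ab\,\partial_z$. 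In each case this reproduces the operator on the nose. For (a) I would also record that the stated density is consistent with the two displayed forms of the measure via Lemma \ref{lem:HaarSol}: since $d\zf = e^{(\qq-\pp)z}\,dx\,dy\,dz$, we get $\ms_{\ab}(d\zf) = e^{(2\ab+\pp-\qq)z}\,d\zf = e^{2\ab z}\,dx\,dy\,dz$. As a sanity check, at the Laplace--Beltrami value $\ab = (\qq-\pp)/2$ the density becomes $e^{(\qq-\pp)z}$, i.e. the Riemannian volume, recovering the expected self-adjointness of the Laplace--Beltrami operator with respect to the Riemannian measure.

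Honestly, there is no real obstacle here: the three statements are literally the same calculation carried out in one, two, and three variables, and all the work is bookkeeping of the drift produced by the $z$-dependence of $\rho$. The one point worth flagging is the precise meaning of ``reversible (self-adjoint).'' If one wants self-adjointness in the strict operator-theoretic sense, the genuine issue would be essential self-adjointness of each operator on $C_c^\infty$, which requires a separate argument; but for the reversibility statement as it is used probabilistically it suffices to verify the symmetric bilinear-form identity above, which is exactly what the divergence-form presentation delivers, and that is what I would do.
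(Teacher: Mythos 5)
Your proof is correct and follows essentially the same route as the paper, whose proof is just the hint that symmetry of $\int f\,(\Lap_{\ab}g)\, e^{2\ab z}\,dx\,dy\,dz$ in $f$ and $g$ ``is straightforward by partial integration.'' Your divergence-form presentation $\Lap_{\ab} = \rho^{-1}\operatorname{div}(\rho A \nabla\,\cdot\,)$ with $\rho = e^{2\ab z}$ is exactly that partial integration made explicit, so it is a fleshed-out version of the paper's argument rather than a different one.
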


\begin{proof}[Proof (hint)] For proving (a) one has to show that for
compactly supported
$C^2$-functions $f, g$ on $\Sol$, one has
$$
\int\!\!\!\!\int\!\!\!\!\int f(x,y,z)\,\,\Lap_{\ab}g(x,y,z)\, e^{2\ab z}
\, dx\,dy\,dz =
\int\!\!\!\!\int\!\!\!\!\int \Lap_{\ab}f(x,y,z)\,\,g(x,y,z)\, e^{2\ab z}
\, dx\,dy\,dz\,.
$$
This is straightforward by partial integration. (b) and (c) are analogous.
\end{proof}

Our Laplacian is invariant under the group action
of $\Sol$. Let $\gf_0 = (a,b,c)$ be a group element, and define the translate
of a function $f$ on $\Sol$ as
$\tau_{\gf_0}f(\gf) = f(\gf_0\gf)\,$, that is,
\begin{equation}\label{eq:gtranslate}
\tau_{\gf_0}f(x,y,z) = f\bigl( e^{\pp c}x + a, e^{-\qq c}y + b, c+z\bigr)
\,.
\end{equation}

\begin{lem}\label{lem:lap-inv} For any $\gf_0 \in \Sol(\pp,\qq)$,
$$
\Lap_{\ab}(\tau_{\gf_0}f) = \tau_{\gf_0}(\Lap_{\ab}f)\,.
$$
\end{lem}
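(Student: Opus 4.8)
The plan is to verify the identity by a direct chain-rule computation, exploiting the precise way in which the exponential coefficients of $\Lap_{\ab}$ interact with the dilation factors produced by the group product \eqref{eq:product}. Writing $\gf_0 = (a,b,c)$ and setting $g = \tau_{\gf_0}f$, formula \eqref{eq:gtranslate} gives $g(x,y,z) = f(X,Y,Z)$ with $X = e^{\pp c}x + a$, $Y = e^{-\qq c}y + b$, and $Z = c + z$. First I would record the three second-order partials obtained from the chain rule: since $\bd X/\bd x = e^{\pp c}$, $\bd Y/\bd y = e^{-\qq c}$, and $\bd Z/\bd z = 1$, and since each of $X$, $Y$, $Z$ depends on only one of $x$, $y$, $z$, there are no mixed contributions, so one gets $g_{xx} = e^{2\pp c} f_{XX}$, $g_{yy} = e^{-2\qq c} f_{YY}$, $g_{zz} = f_{ZZ}$, together with $g_z = f_Z$ (all partials of $f$ being evaluated at $(X,Y,Z)$).

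Substituting these into the definition \eqref{eq:lap-sol} of $\Lap_{\ab}$, the crucial cancellation is that the coefficient $e^{2\pp z}$ combines with $e^{2\pp c}$ to give $e^{2\pp(z+c)} = e^{2\pp Z}$, and likewise $e^{-2\qq z}\cdot e^{-2\qq c} = e^{-2\qq Z}$. Hence
$$
\Lap_{\ab} g = \frac12\bigl(e^{2\pp Z}\, f_{XX} + e^{-2\qq Z}\, f_{YY} + f_{ZZ}\bigr) + \ab\, f_Z\,,
$$
which is precisely $(\Lap_{\ab}f)(X,Y,Z) = \tau_{\gf_0}(\Lap_{\ab}f)(x,y,z)$, as desired.

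There is no serious obstacle here beyond careful bookkeeping; the computation succeeds because the additive shift $z \mapsto z+c$ in the third coordinate is exactly what converts the $z$-dependent coefficients of $\Lap_{\ab}$ into their $Z$-dependent counterparts, while the matching multiplicative dilations in $x$ and $y$ are absorbed when the second derivatives square them. Conceptually this is no accident: the length element $ds^2 = e^{-2\pp z}\,dx^2 + e^{2\qq z}\,dy^2 + dz^2$ is invariant under the left action of $\Sol(\pp,\qq)$ (as noted after \eqref{eq:solmatrix}), so the associated Laplace--Beltrami operator is automatically left-invariant, and the drift term $\ab\,\bd/\bd z$ is left-invariant because $\hor$ is a homomorphism onto $\R$ (equivalently, the identity $g_z = f_Z$ shows that $\bd/\bd z$ commutes with $\tau_{\gf_0}$). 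One could therefore alternatively invoke the general invariance of the Laplace--Beltrami operator under isometries and treat the drift separately, but the explicit computation above is the shortest route and also makes the underlying mechanism transparent.
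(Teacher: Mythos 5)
Your proof is correct and is exactly the argument the paper has in mind: the paper dismisses the lemma with ``the proof is completely elementary, using \eqref{eq:gtranslate},'' and your chain-rule computation, with the cancellation $e^{2\pp z}e^{2\pp c} = e^{2\pp Z}$ and $e^{-2\qq z}e^{-2\qq c} = e^{-2\qq Z}$, is precisely that elementary verification carried out in full.
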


The proof is completely elementary, using \eqref{eq:gtranslate}.

\smallskip

We shall need the following observations on the metric.
Regarding our hyperbolic planes  in the logarithmic model, let us remark
here that the metric of $\Hb(\pp)$ is linked with the standard one 
of $\Hb= \Hb(1)$ by the formula
\begin{equation}\label{eq:metric-p-1}
\dist_{\Hb(\pp)}\bigl((x,z)\,,\,(x',z')\bigr)
= \frac{1}{\pp}\,\dist_{\Hb(1)}\bigl((\pp x,\pp z)\,,\,(\pp x',\pp z')\bigr).
\end{equation}


While for Diestel-Leader graphs, there is an explicit formula for
the graph metric in terms of the two underlying trees \cite{Be},
we do not have such a formula on $\Sol$. However, we have at least
the following distance estimates.

\begin{pro}\label{pro:metric} For all 
$\zf = (x,y,z)\in \Sol(\pp,\qq)$, with $x,y \ne 0$ 
in \emph{(iv)}, 
\begin{align}
\dist_{\Sol}(\of, \zf) &\ge |z|\,, \tag{i}\\
\dist_{\Sol}(\of, \zf) 
&\ge 2 \,\frac{\log|x|}{\pp} +  2 \,\frac{\log|y|}{\qq}
-|z| - \left(\frac{1}{\pp} + \frac{1}{\qq}\right) 
\log \dist_{\Sol}(\of, \zf) \,,\tag{ii}\\
\dist_{\Sol}(\of, \zf) 
&\le \dist_{\Hb(\pp)}\bigl((x,z)\,,\, (0, 0)\bigr)
\,+\, \dist_{\Hb(\qq)}\bigl((y,-z)\,,\, (0,0)\bigr)
- |z|\tag{iii}\\
&\le c + \frac{2\log(1+|x|)}{\pp} +\frac{2\log(1+|y|)}{\qq}+|z|\,,\notag\\ 
\dist_{\Sol}(\of, \zf)
&\le c' + \left|\frac{\log|x|}{\pp} 
+ \frac{\log|y|}{\qq}\right|\tag{iv}\\
&\qquad+ \min \left\{ \left| \frac{\log|x|}{\pp} \right| +
\left| \frac{\log|y|}{\qq} + z \right|\,,\,   
\left| \frac{\log|x|}{\pp} -z \right| +
\left| \frac{\log|y|}{\qq}\right|\right\},\notag
\end{align}
where $c, c' > 0$.
\end{pro}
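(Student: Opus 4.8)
The plan is to get the two lower bounds (i), (ii) from the distance-decreasing projections combined with an analysis of a geodesic's extremal heights, and the two upper bounds (iii), (iv) by exhibiting explicit paths. For (i), recall that $\hor\colon\Sol(\pp,\qq)\to\R$ is $1$-Lipschitz: the $z$-part of the length element is just $dz^2$, so the $\hor$-image of any path from $\of$ to $\zf$ is a path from $0$ to $z$ of no greater length, whence $\dist_\Sol(\of,\zf)\ge|z|$. The same reasoning shows that $\pi_1,\pi_2$ are distance-decreasing, so $\dist_\Sol(\of,\zf)$ dominates each of $\dist_{\Hb(\pp)}\bigl((0,0),(x,z)\bigr)$ and $\dist_{\Hb(\qq)}\bigl((0,0),(y,-z)\bigr)$; these feed into both (ii) and the upper bounds, and I will freely use the distance-preserving relations \eqref{eq:preservedist}.

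For (ii) I would fix a unit-speed geodesic $\ga$ from $\of$ to $\zf$ of length $D=\dist_\Sol(\of,\zf)$ and set $Z^+=\max_t z(t)$, $Z^-=\min_t z(t)$. Two facts combine. First, horizontal motion is cheap only at large height: from $e^{-\pp z}|\dot x|\le 1$ and $e^{-\pp z}\ge e^{-\pp Z^+}$ one gets $|x|=\bigl|\!\int\dot x\bigr|\le e^{\pp Z^+}\!\int e^{-\pp z}|\dot x|\,dt\le e^{\pp Z^+}D$, i.e.\ $Z^+\ge\frac1\pp(\log|x|-\log D)$, and symmetrically $-Z^-\ge\frac1\qq(\log|y|-\log D)$. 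Second, a path starting at height $0$, visiting both $Z^+$ and $Z^-$ and ending at $z$ has vertical variation $\int|\dot z|\,dt\ge 2Z^+-2Z^--|z|$, while $D\ge\int|\dot z|\,dt$. Substituting the height bounds produces a lower bound of exactly the shape of (ii). The main obstacle is the precise logarithmic correction: the crude estimate $|x|\le e^{\pp Z^+}D$ gets \emph{doubled} when passed through $2Z^+$, so to reach the stated constant $\bigl(\tfrac1\pp+\tfrac1\qq\bigr)$ the apex must be located more sharply. Here I would use that along a geodesic the momenta $e^{-2\pp z}\dot x=p$ and $e^{2\qq z}\dot y=q$ are conserved (translations in $x$ and $y$ are isometries of the length element); writing $|x|=|p|\int e^{2\pp z}\,dt$ and analysing this integral near the turning point $z=Z^+$ pins $Z^+=\frac1\pp\log|x|+O(1)$ with a genuinely subdominant error, which is what brings the correction down to logarithmic order in $D$.

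For the upper bound (iii) the engine is the elementary inequality $\sqrt{S_1^2+S_2^2-c^2}\le S_1+S_2-c$ for $S_1,S_2\ge c\ge 0$ (equivalently $(S_1-c)(S_2-c)\ge 0$). Taking $S_i$ to be the $\pi_i$-speed and $c=|\dot z|$, and using the identity, read off from the length element, that the squared $\Sol$-speed equals $(\pi_1\text{-speed})^2+(\pi_2\text{-speed})^2-\dot z^2$, this integrates to
\[
\dist\text{-length}_{\Sol}(\ga)\le \text{length}_{\Hb(\pp)}(\pi_1\ga)+\text{length}_{\Hb(\qq)}(\pi_2\ga)-\int_\ga|\dot z|\,dt
\]
for every path $\ga$. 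I would then take $\ga$ to be an explicit ``descend, then ascend'' path: drop to height $\approx-\frac1\qq\log|y|$ and carry out the $\Hb(\qq)$-motion in $y$ where it is cheap, rise to height $\approx\frac1\pp\log|x|$ and carry out the $\Hb(\pp)$-motion in $x$, then settle at height $z$, each leg's cost being a genuine hyperbolic distance by \eqref{eq:preservedist}. The net vertical displacement $z$ forces $\int|\dot z|\ge|z|$, and the $-\int|\dot z|$ term supplies the $-|z|$ saving. The genuine obstacle is that the $\Hb(\pp)$- and $\Hb(\qq)$-geodesics have incompatible height profiles (one rises then falls, the other falls then rises), so a single $\Sol$-path cannot trace both exactly; reconciling this while keeping the two projected lengths down to $\dist_{\Hb(\pp)}\bigl((0,0),(x,z)\bigr)+\dist_{\Hb(\qq)}\bigl((0,0),(y,-z)\bigr)$ is the crux, and I would sanity-check it against the sharp cases $x=0$ and $y=0$, where (iii) holds with equality.

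Finally, the second line of (iii) and the estimate (iv) are routine once the first line of (iii) is available. For (iii) I would insert the standard upper estimates in the logarithmic model, namely $\dist_{\Hb(\pp)}\bigl((0,0),(x,z)\bigr)\le \frac2\pp\log(1+|x|)+|z|+O(1)$ (via \eqref{eq:metric-p-1} and the triangle inequality through $(x,0)$) together with its $\Hb(\qq)$-analogue, and collect terms. For (iv) I would instead keep the apex heights $\frac1\pp\log|x|$ and $\frac1\qq\log|y|$ explicit and optimise the \emph{order} of the two legs according to the sign of $\frac{\log|x|}{\pp}+\frac{\log|y|}{\qq}$; this is precisely what yields the minimum of the two symmetric expressions displayed in (iv).
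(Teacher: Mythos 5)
Your parts (i) and (iv), and the second line of (iii), are essentially the paper's own arguments (the paper proves (iv) with exactly your path: vertical segment to the apex height $\tfrac1\pp\log|x|$, a crossing of constant cost $c_\pp$, vertical segment to $-\tfrac1\qq\log|y|$, a crossing of cost $c_\qq$, then vertical to $z$, taking the better of the two orders). The genuine gap is the first line of (iii): you correctly isolate the crux — the incompatible height profiles of the two hyperbolic geodesics — but you leave it unresolved, and without it there is no proof. The missing idea in the paper is a splitting, not a single path tracing both geodesics. Assume $z\ge 0$; split the $\Hb(\pp)$-geodesic from $(0,0)$ to $(x,z)$ at the \emph{first} point $(x',z)$ at height $z$, and the $\Hb(\qq)$-geodesic from $(0,0)$ to $(y,-z)$ at the \emph{last} point $(y',0)$ at height $0$. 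The two middle sub-arcs have monotone heights running from $0$ to $z$, resp.\ from $0$ to $-z$, hence compatible profiles; synchronizing them by height gives one $\Sol$-path $t\mapsto\bigl(x(t),y(t),t\bigr)$, $t\in[0,z]$, whose length your key inequality (in the equivalent form $\sqrt{A+B+1}\le\sqrt{A+1}+\sqrt{B+1}-1$) bounds by the sum of the two sub-arc lengths minus $z$. The two discarded sub-arcs are not traced by this path at all: they are paid for separately, at exactly their hyperbolic cost, inside the totally geodesic fibers of \eqref{eq:preservedist}, namely from $(0,0,0)$ to $(0,y',0)$ in the fiber $x=0$, and from $(x',y,z)$ to $(x,y,z)$ in the fiber with $y$ fixed. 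Since $(x',z)$ and $(y',0)$ lie on the respective geodesics, the pieces sum to the two full hyperbolic distances, which is precisely (iii).

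Concerning (ii): your factor-of-two objection is well founded, but be aware that it applies to the paper itself. The paper's proof is exactly your ``crude'' argument ($\pp M\ge\log|x|-\log D$, $-\qq m\ge\log|y|-\log D$, $D\ge 2(M-m)-|z|$ with $M,m$ the extremal heights and $D$ the distance), and it therefore yields the coefficient $2\bigl(\tfrac1\pp+\tfrac1\qq\bigr)$ in front of $\log D$, not the displayed $\tfrac1\pp+\tfrac1\qq$; the constant in the statement does not match the proof (and (ii) is never used later in the paper, so nothing downstream depends on it). Your proposed sharpening, however, is not a proof as it stands: the claim $Z^+=\tfrac1\pp\log|x|+O(1)$ via momentum conservation and a turning-point analysis is hyperbolic-plane reasoning that does not transfer to $\Sol$. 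Along a $\Sol$-geodesic the vertical equation is $\ddot z=-\pp\, p^2e^{2\pp z}+\qq\, q^2e^{-2\qq z}$ with \emph{both} momenta present; at a height maximum the two terms can nearly balance ($\Sol$ even has straight horizontal locally geodesic lines where they balance exactly), so a geodesic may linger near its top height, and $\int e^{2\pp z}\,dt$ need not be comparable to $e^{2\pp Z^+}$. So either supply a genuinely new argument for the stated constant, or prove (ii) with the doubled coefficient, for which your argument (= the paper's) is already complete.
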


\begin{proof}
Inequality (i) is clear.

\smallskip

For (ii), Let $\zf(t)=\bigl(x(t),y(t),z(t)\bigr)_{t\in[0, d]}$ be a geodesic 
path in $\Sol$ from $\of$ to $\zf$, where $d=\dist_{\Sol}(\of , \zf)$. 
Let 
$$ 
M=\max \{ z(t) :t\in[0, d]\} \AND m= \min\{ z(t) :t\in[0, d]\}\,,
$$
so that $M \ge 0$ and $m \le 0$. Then
$$
\begin{aligned}
\dist_{\Sol}(\zf\,,\,\of)&=
\int_0^d\sqrt{e^{-2\pp z(t)}\dot{x}(t)^2+e^{2\qq z(t)}\dot{y}(t)^2+\dot{z}(t)^2}
\,dt\\
&\ge e^{-\pp M}\int_0^d\sqrt{\dot{x}(t)^2+\dot{z}(t)^2}\,dt
\ge e^{-\pp M} \sqrt{x^2+z^2}
\end{aligned}
$$
Thus
$$ 
\pp  M\ge \log \sqrt{x^2+z^2} - \log \dist_{\Sol}(\of,\zf)\,,\; 
\text{and analogously}\;
-\qq m\ge \log \sqrt{y^2+z^2} - \log \dist_{\Sol}(\of,\zf)\,.
$$
Now let $\zf_M$ and $\zf_m$ be points on the geodesic from 0 to $\zf$ with 
heights $M$ and $m$, respectively. Then, according to which of the two
``comes first'', (i) yields that
either
$$
\begin{aligned}
\dist_{\Sol}(\of,\zf) &= \dist_{\Sol}(\of,\zf_M)+ \dist_{\Sol}(\zf_M, \zf_m)+\dist_{\Sol}(\zf_m,\zf)
\ge M + (M-m) + (z-m)\,,\quad\text{or}\\
\dist_{\Sol}(\of,\zf) &= \dist_{\Sol}(\of,\zf_m)+ \dist_{\Sol}(\zf_m, \zf_M)+\dist_{\Sol}(\zf_M,\zf)
\ge -m + (M-m) + (M-z)\,.
\end{aligned}
$$
We see that 
$
\dist_{\Sol}(\of,\zf) \ge 2(M-m) - |z|\,,
$
and combining this with the above, we obtain (ii). 

\smallskip

For proving the first part of (iii), we may suppose without loss of generality that 
$z \ge 0$. 

Note that in the logarithmic model of $\Hb(\pp)$, 
any geodesic arc is either vertical (i.e., of the form
$t \mapsto (x_0,t)$, where $x_0$ is fixed and $t$ varies in an interval),
or else it can be realised as $t \mapsto \bigl(t,z(t)\bigr)$, where $z(t)$ is a strictly 
concave function of $t$ varying in an interval. 

Let $(x',z)$ be the first (``leftmost'') point on the geodesic arc 
from $(0,0)$ to $(x,z)$ in $\Hb(\pp)$ with second coordinate $z$, 
and  let $(y',0)$ be the last (``rightmost'')
point on the geodesic arc from $(0,0)$ to $(y,-z)$ in $\Hb(\qq)$
with second coordinate $0$. 
We may have $x'=x$ or $y'=0$, but in any case, the geodesic arc from 
$(0,z)$ to $(x',z)$ in $\Hb(\pp)$ is strictly increasing in both
coordinates, while the geodesic arc from $(y',0)$ to $(y,-z)$ in $\Hb(\qq)$
is strictly inreasing in the first and strictly decreasing in the second
variable.
That is, these two arcs can be parametrised, respectivley,  as
$$
t \mapsto \bigl( x(t),t \bigr) \AND t \mapsto \bigl(y(t),-t\bigr)\,,
$$
where $t \in [0\,,\,z]$ and $\dot x(t), \dot y(t) > 0$.
Now we can ``synchronise'' the two in order
to get the curve 
$$
t \mapsto \bigl( x(t),y(t), t \bigr)\,,\quad  t \in [0\,,\,z]\,,
$$
that connects $(0,y',0)$ with $(x',y,z)$ in $\Sol(\pp,\qq)$. 
The length of this curve majorises the distance between these two points in 
$\Sol(\pp,\qq)$ and is
$$
\begin{aligned}
\int_{0}^{z} 
&\sqrt{e^{-2\pp t}\, \dot x(t)^2 + e^{2\qq t} \,\dot y(t)^2 +1}\,\,dt\\
&\le \int_{0}^{z} \left(\sqrt{e^{-2\pp t}\, \dot x(t)^2  + 1}
+ \sqrt{e^{2\qq t}\, \dot y(t)^2 + 1} - 1\right)dz\\
&= \dist_{\Hb(\pp)}\bigl((0,0)\,,\, (x', z)\bigr)
+ \dist_{\Hb(\qq)}\bigl((y',0)\,,\, (y,-z)\bigr)
- z\,.
\end{aligned}
$$
Now, by  \eqref{eq:preservedist}, 
$$
\dist_{\Sol}(\of\,,\, \zf) 
\le \underbrace{\dist_{\Sol}\bigl((0,0,0)\,,\, (0, y',0)\bigr)}_{ 
\displaystyle =\dist_{\Hb(\qq)}\bigl((0,0)\,,\, (y',0)\bigr)}
+\, \dist_{\Sol}\bigl((0,y',0)\,,\, (x', y,z)\bigr) 
+ \underbrace{\dist_{\Sol}\bigl((x',y,z)\,,\, (x, y,z)\bigr)}_{ 
\displaystyle =\dist_{\Hb(\pp)}\bigl((x',z)\,,\, (x,z)\bigr)}
$$
We insert the upper bound for the middle term that we derived above.
Since 
$$
\begin{aligned}
\dist_{\Hb(\pp)}\bigl((0,0)\,,\, (x', z)\bigr)
+ \dist_{\Hb(\pp)}\bigl((x',z)\,,\, (x,z)\bigr) &=
\dist_{\Hb(\pp)}\bigl((0,0)\,,\, (x, z)\bigr)\AND\\
\dist_{\Hb(\qq)}\bigl((0,0)\,,\, (y',-z)\bigr)
+ \dist_{\Hb(\qq)}\bigl((y',-z)\,,\, (y,-z)\bigr) &=
\dist_{\Hb(\qq)}\bigl((0,0)\,,\, (y,-z)\bigr),
\end{aligned}
$$
the proposed inequality follows. For the second part of (iii), we use
\eqref{eq:metric-p-1}:
$$
\begin{aligned}
\dist_{\Hb(\pp)}\bigl((x,z)\,,\, (0, 0)\bigr) 
&\le |z| + \frac{1}{\pp}\dist_{\Hb(1)}\bigl((\pp x,0)\,,\, (0, 0)\bigr) 
= |z| +\frac{1}{\pp} \log \frac{\sqrt{ (\pp x)^2 + 4 } + |\pp x|}
{\sqrt{ (\pp x)^2 + 4 } - |\pp x|} \\
&\le |z| + \frac{1}{\pp}\log \bigl((\pp x)^2 + \pp|x| + 1 \bigr) 
\le |z| + \frac{2\log(1+\pp)}{\pp}  + \frac{2\log(1+|x|)}{\pp} \,.
\end{aligned}
$$
Combining this with the analogous bound for 
$\dist_{\Hb(\qq)}\bigl((y,-z)\,,\, (0,0)\bigr)$, the inequality follows.

\smallskip

For proving (iv), first note that for all $x \ne 0$,
$$
\dist_{\Hb(\pp)}\Bigl(\bigl( 0, \tfrac{\log|x|}{\pp}\bigr)\,,\,
  \bigl( x, \tfrac{\log|x|}{\pp}\bigr)\Bigr) =  c_{\pp}
$$
depends only on $\pp$. 
Then, using \eqref{eq:preservedist},
$$
\begin{aligned}
\dist_{\Sol}(\of\,,\, \zf) 
&\le \dist_{\Sol}\Bigl(\of\,,\, \bigl( 0,0, \tfrac{\log|x|}{\pp}\bigr)\Bigr)  
+ \dist_{\Sol}\Bigl(\bigl( 0,0, \tfrac{\log|x|}{\pp}\bigr)\,,\,
		    \bigl( x,0, \tfrac{\log|x|}{\pp}\bigr)\Bigr)\\
&\quad+ \dist_{\Sol}\Bigl(\bigl( x,0, \tfrac{\log|x|}{\pp}\bigr)\,,\,
		    \bigl( x,0, -\tfrac{\log|y|}{\qq}\bigr)\Bigr)\\
&\quad+  \dist_{\Sol}\Bigl(\bigl( x,0, -\tfrac{\log|y|}{\qq}\bigr)\,,\,
		    \bigl( x,y, -\tfrac{\log|y|}{\qq}\bigr)\Bigr)
+ \dist_{\Sol}\Bigl(\bigl( x,y, -\tfrac{\log|y|}{\qq}\bigr)\,,\,
		    \zf\Bigr)\\
&= \Big|\tfrac{\log|x|}{\pp}\Big| + c_{\pp}  
+ \Big|\tfrac{\log|x|}{\pp}+ \tfrac{\log|y|}{\qq}\Big|+ c_{\qq}
+ \Big|\tfrac{\log|y|}{\qq}+z\Big|\,.	    
\end{aligned} 		    
$$
Exchanging the roles of $x$ and $y$, as well as of $\pp$ and $\qq$, the inequality
follows.
\end{proof}


\section{Central limit theorem and rate of escape}\label{sect:clt}

Let $\Zf_t=(X_t,Y_t , Z_t)$, $t \ge 0$, be the continuous diffusion on
$\Sol(\pp,\qq) \equiv \R^3$ whose infinitesimal generator is $\Lap_\ab$.
If the starting point is $\of = (0,0,0)$, then  $\Zf_t$ is given
by the stochastic integrals
\begin{equation}\label{eq:stochint}
\left \{ \begin{array}{l}
\displaystyle Z_t= \ab\, t + W_t\,,\\[12pt]
\displaystyle X_t= \int_0^t e^{\pp Z_s}\, dW^{(1)}_s\\[12pt]
\displaystyle Y_t= \int_0^t e^{-\qq Z_s}\, dW^{(2)}_s\,,
\end{array}\right.
\end{equation}
where  $(W_t, W_t^{(1)},W_t^{(2)})_{t \ge 0}$ are three independent standard
Brownian motions. (We do not
attach a superscript to the one that defines the coordinate $Z_t$, because
this is the most important one that determines the behaviour of all three.)
See for instance {\sc Revuz and Yor}~\cite{RY} or {\sc Protter}~\cite{Pr},
and compare, in particular, with
{\sc Baldi, Casadio Tarabusi, Fig\`a-Talamanca and Yor}~\cite{BCFY}.

For the following central limit theorem, let
$$
\mathcal N = W_1\,, \quad \underline{\mathcal M} = \min \{ W_t : 0 \le t \le 1\}
\AND \overline{\!\mathcal M} = \max \{ W_t : 0 \le t \le 1\}\,,
$$
so that $\mathcal N$ has standard normal distribution.

\begin{thm}\label{thm:clt} \emph{(i)} If $\ab >0$, then as $t \to \infty$
$$
\frac{1}{\sqrt{t}}\Bigl(\log|X_t|-\pp \ab \, t\,,\,
\log|Y_t|\,,\, Z_t-\ab \,t\Bigr)
\to (\pp\, \mathcal N\,,\,0\,,\,\mathcal N) \quad \text{in law.}
$$
\emph{(ii)} If $\ab < 0$, then as $t \to \infty$
$$
\frac{1}{\sqrt{t}}\Bigl(\log|X_t|\,,\,
\log|Y_t|+\qq \ab\, t \,,\, Z_t-\ab \, t\bigr)
\to (0\,,\,\qq\, \mathcal N\,,\,\mathcal N)
\quad \text{in law.}
$$
\emph{(iii)} If $\ab =0$, then as $t \to \infty$
$$
\frac{1}{\sqrt{t}}\Bigl(\log|X_t|\,,\,\log|Y_t|\,,\, Z_t\bigr)
\to (\pp\,\overline{\!\mathcal M}\,,\,-\qq\, \underline{\mathcal M}
\,,\,\mathcal N)
\quad \text{in law.}
$$
\end{thm}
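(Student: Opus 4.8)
Set $\mathcal N_t := W_t/\sqrt t$, which is exactly standard normal for every $t$, so that the third coordinate $Z_t-\ab t = W_t = \sqrt t\,\mathcal N_t$ comes for free; the task is to tie $\log|X_t|$ and $\log|Y_t|$ to this single driving motion $W$. Since $W^{(1)}$ and $W^{(2)}$ are independent of $W$, conditionally on the whole path $(Z_s)_{s\ge 0}$ the integrals $X_t,Y_t$ are continuous centred Gaussian martingales, and by Dambis--Dubins--Schwarz
$$
X_t = \beta^{(1)}_{A_t}\,,\quad Y_t = \beta^{(2)}_{C_t}\,,\qquad
A_t = \int_0^t e^{2\pp Z_s}\,ds\,,\quad C_t = \int_0^t e^{-2\qq Z_s}\,ds\,,
$$
where $\beta^{(1)},\beta^{(2)}$ are standard Brownian motions independent of $Z$. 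The first reduction is that, whenever a clock diverges,
$$
\log|\beta_u| = \tfrac12\log u + \log\bigl|\beta_u/\sqrt u\,\bigr|\,,
$$
and $\beta_u/\sqrt u$ is standard normal independent of the clock, whence $\log|X_t| = \tfrac12\log A_t + O_{\Prob}(1)$ and likewise for $Y_t$; when a clock stays bounded, the corresponding logarithm is itself $O_{\Prob}(1)$ and vanishes after division by $\sqrt t$.

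\textbf{Clock asymptotics.} The heart of the matter is a Laplace/Watson estimate
$$
\log A_t = 2\pp\,\overline Z_t + O_{\Prob}(\log t)\,,\qquad
\log C_t = -2\qq\,\underline Z_t + O_{\Prob}(\log t)\,,
$$
valid whenever the relevant clock diverges, where $\overline Z_t = \max_{0\le s\le t}Z_s$ and $\underline Z_t = \min_{0\le s\le t}Z_s$. The upper bound is trivial ($A_t\le t\,e^{2\pp\overline Z_t}$); for the lower bound one uses that the continuous process $Z$ spends a sojourn time near its running maximum that is bounded away from zero in probability, so that $A_t \ge (\text{that time})\cdot e^{2\pp(\overline Z_t-1)}$ and the logarithm of the sojourn time is $o(\sqrt t)$. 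After dividing by $\sqrt t$ the error terms disappear, and everything reduces to the asymptotics of $\overline Z_t$ and $\underline Z_t$.

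\textbf{The three regimes.} If $\ab>0$ then $Z_s\to+\infty$, so $C_t\to C_\infty<\infty$ almost surely and $\log|Y_t|/\sqrt t\to0$, while the running maximum of a positive-drift Brownian motion is attained near the endpoint: $\overline Z_t - Z_t = \max_{0\le r\le t}\bigl(-\ab r + (W_{t-r}-W_t)\bigr)$ is the maximum of a negative-drift Brownian motion and is tight. Hence $\log|X_t| = \pp Z_t + O_{\Prob}(\log t)$, giving
$$
\tfrac1{\sqrt t}\bigl(\log|X_t|-\pp\ab t\bigr) = \pp\,\mathcal N_t + o_{\Prob}(1)
\AND \tfrac1{\sqrt t}\bigl(Z_t-\ab t\bigr) = \mathcal N_t\,,
$$
and by Slutsky the joint limit of (i) is $(\pp\mathcal N,0,\mathcal N)$. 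The case $\ab<0$ follows from (i) by the involution $(x,y,z)\mapsto(y,x,-z)$, which conjugates $\Lap_{\ab}^{\Sol(\pp,\qq)}$ to $\Lap_{-\ab}^{\Sol(\qq,\pp)}$ and carries $(X_t,Y_t,Z_t)$ to the drift-$(-\ab)$ process on $\Sol(\qq,\pp)$ with the two factors interchanged; unwinding the substitution (and using $\mathcal N\overset{d}{=}-\mathcal N$) yields (ii). Finally, if $\ab=0$ then $Z_s=W_s$, both clocks diverge, and $\overline Z_t,\underline Z_t$ are genuinely of order $\sqrt t$. Here I invoke Brownian scaling: the rescaled path $\widehat W^{(t)}_u := W_{tu}/\sqrt t$, $u\in[0,1]$, has the law of $(W_u)_{0\le u\le 1}$, and the clock estimate gives
$$
\tfrac1{\sqrt t}\bigl(\log|X_t|,\,\log|Y_t|,\,Z_t\bigr)
= \Bigl(\pp\max_{u}\widehat W^{(t)}_u,\,-\qq\min_{u}\widehat W^{(t)}_u,\,\widehat W^{(t)}_1\Bigr) + o_{\Prob}(1)\,.
$$
Since $w\mapsto\bigl(\max w,\min w,w(1)\bigr)$ is continuous on $C[0,1]$, the continuous mapping theorem gives the joint limit $(\pp\,\overline{\!\mathcal M},\,-\qq\,\underline{\mathcal M},\,\mathcal N)$ of (iii).

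\textbf{Main obstacle.} The genuine work is the lower bound in the clock estimate: making rigorous that $Z$ lingers near its running extremum long enough that $\log(\text{sojourn time})=o(\sqrt t)$, equivalently controlling $\log A_t - 2\pp\overline Z_t$ in probability, together with the tightness of $\overline Z_t - Z_t$ (and the almost-sure finiteness of $C_\infty$) in the drift cases. Once these $o_{\Prob}(1)$ errors are secured, the joint convergence follows from Slutsky and the exact law of $W_t/\sqrt t$, and in the driftless case from the functional scaling of Brownian motion; these concluding steps are routine.
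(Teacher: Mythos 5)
Your proposal is correct in outline and shares the paper's skeleton: both start from the Dambis--Dubins--Schwarz representation of $X_t,Y_t$ via the clocks $A_t=V_t(\pp)=\int_0^t e^{2\pp Z_s}ds$ and $C_t=V_t(-\qq)$ (the paper gets independence of the two DDS motions from Knight's theorem, where you get it by conditioning on $Z$), and both reduce the theorem to the asymptotics of $\log A_t$, $\log C_t$ jointly with $W_t$. Where you genuinely diverge is the clock analysis. You run one Laplace-type estimate, $\log A_t = 2\pp\,\overline Z_t + O_{\Prob}(\log t)$ with $\overline Z_t=\max_{s\le t}Z_s$, through all regimes, supplemented by tightness of $\overline Z_t - Z_t$ when $\ab>0$. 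The paper never touches running extrema in the drift cases: it factors $V_t(\pp)=e^{2\pp W_t}\int_0^t e^{2\pp(\ab(s-t)+W_s-W_t)}\,ds$ and identifies the last integral, by time reversal, with the perpetuity $\int_0^t e^{-2\pp(\ab r - W_r)}\,dr$, which converges a.s.; this gives $O_{\Prob}(1)$ errors with no auxiliary lemma. For $\ab=0$ the paper uses Brownian scaling and the convergence of $L^{\tau}$-norms to the $L^{\infty}$-norm on $C[0,1]$ --- which is exactly your Laplace principle, but with a one-line proof. So your route buys a single uniform mechanism for all three cases; the paper's buys a self-contained argument at every step.

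This matters because the step you leave open (the sojourn-time lower bound) is precisely what the paper's devices avoid; fortunately it is true and fillable. For $\ab\ne 0$ you can dispense with the running maximum entirely: $A_t \ge \int_{t-1}^t e^{2\pp Z_s}ds \ge e^{2\pp(Z_t-\omega_t)}$ with $\omega_t=\max_{t-1\le s\le t}|Z_s-Z_t|$ tight, which together with the trivial upper bound and tightness of $\overline Z_t-Z_t$ gives $\log A_t = 2\pp Z_t + O_{\Prob}(\log t)$. For $\ab=0$, L\'evy's theorem ($\overline Z - Z \overset{d}{=} |W|$ as processes) shows the time spent within $1$ of the running maximum on $[0,t]$, $t\ge 1$, stochastically dominates the fixed a.s.\ positive variable $\bigl|\{u\in[0,1]:|W_u|\le 1\}\bigr|$ (Lebesgue measure). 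With these inserted, your argument is complete. One final caveat, which your write-up shares with the paper's one-line proof of (ii): the swap $(x,y,z)\mapsto(y,x,-z)$ honestly yields, for $\ab<0$, the joint limit $(0,\qq\mathcal N,-\mathcal N)$ --- indeed $\log|Y_t|+\qq\ab t\approx -\qq W_t$ while $Z_t-\ab t = W_t$ --- and replacing $\mathcal N$ by $-\mathcal N$ is only a marginal identity, not a joint one. So neither argument literally produces $(0,\qq\mathcal N,\mathcal N)$; the sign of the correlation in statement (ii) appears to be a slip in the theorem as stated, not a defect peculiar to your proof.
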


\begin{proof}
For $\alpha \in \R$, set
$$
V_t(\alpha) = \int_0^t e^{2\alpha Z_s} \,ds,
$$
so that the quadratic variations of $X_t$ and $Y_t$ are
$V_t(\pp)$ and  $V_t(-\qq)$, respectively.
Then by a theorem of {\sc Dambis, Dubin and Schwartz}~\cite{Da}, \cite{DS},
see also \cite[p. 173]{RY}, there exist two standard Brownian motions $(B_t^{(1)})_{t\ge 0}$ and
$(B_t^{(2)})_{t\ge 0}$ such that
\begin{equation}\label{eq:XYB}
X_t = B_{V_t(\pp)}^{(1)} \AND Y_t = B_{V_t(-\qq)}^{(2)}.
\end{equation}
By  a theorem of {\sc Knight}~\cite{Kn}, see \cite[p. 175]{RY},
the processes $(B_t^{(1)})_{t\ge 0}$ and $(B_t^{(2)})_{t\ge 0}$ are independent
in our case.

By the scaling property of Brownian motion, for $i=1,2$ and $\alpha = \pp$,
resp. $\alpha =-\qq$,
$$
\frac{\log \,\bigl|\,B^{(i)}_{V_t(\alpha)}\big/\sqrt{V_t(\alpha)}\,\bigr|}{\sqrt{t}}
=\frac{\log\left|B_1\right|}{\sqrt{t}}\to 0 \quad \text{in law.}
$$
In the following computations we use frequently the following simple fact.
\begin{equation}\label{eq:fact}
\text{If $\;A_t\to A\;$ and
$\;C_t\to 0\;$ in law then  $\;(A_t,C_t)\to (A,0)\;$ in law, as $\;t\to
\infty\,$.}
\end{equation}
Thus
\begin{equation}\label{eq:varid}
\begin{aligned}
&\lim_{t\to\infty}\frac{1}{\sqrt{t}}
\Bigl(\log|X_t|-\pp\,\ab\, t\,,\,\log|Y_t|\,,\, Z_t-\ab \,t\Bigr)
\\
&\qquad=\lim_{t\to\infty}\frac{1}{\sqrt{t}}
\Bigl(\log \sqrt{V_t(\pp)} - \pp \,\ab\, t\,,\,
\log \sqrt{V_t(-\qq)}\,,\, W_t\Bigr)
\quad\text{in law.}
\end{aligned}
\end{equation}

\noindent
Case (i): $\ab >0$.
First observe that for all $\alpha>0$ and $\beta \in \R$,
$$
\lim_{t\to\infty} \int_0^t e^{-\alpha s+\beta W_s}\, ds
=\int_0^\infty e^{-\alpha s+\beta W_s}\, ds
\in (0,+\infty) \quad \text{almost surely,}
$$
since $\lim_{s \to \infty} \log (e^{-\alpha s+\beta W_s})/s=
-\alpha<0$. Therefore
\begin{equation}\label{eq:Vtneg}
\frac{1}{\sqrt{t}}\log \sqrt{V_t(-\alpha)} \to 0 \quad \text{in law, when}\; \alpha > 0.
\end{equation}
Using \eqref{eq:fact}, we get (in law) that
$$
\begin{aligned}\lim_{t\to\infty}
&\frac{1}{\sqrt{t}}\Bigl( \log \sqrt{V_t(\pp)} - \,\pp\,\ab\,t\,,\,
\log \sqrt{V_t(-\qq)}\,,\, W_t\Bigr)\\
&\qquad=\lim_{t\to\infty}
\frac{1}{\sqrt{t}}\Bigl( \log \sqrt{V_t(\pp)} - \,\pp\,\ab\,t\,,\,
0\,,\,W_t \Bigr)\\
&\qquad=\lim_{t\to\infty}
\frac{1}{\sqrt{t}}\left(\frac12
\log \Bigl(\int_0^t e^{2\pp(\ab(s-t)+W_s)}\,ds\Bigr)\,,\,
0\,,\,W_t \right)\\
&\qquad=\lim_{t\to\infty}
\frac{1}{\sqrt{t}}\left(\frac12
\log \Bigl(e^{2\pp W_t}\int_0^t e^{2\pp(\ab(s-t)+W_s-W_t)}\,ds\Bigr)\,,\,
0\,,\,W_t \right)
\end{aligned}
$$
Since $(W_{t-s})_{s\leq t} = (W_t-W_s)_{s\le t}$ in law, 
$$
\begin{aligned}
&\lim_{t\to\infty}
\frac{1}{\sqrt{t}}
\log \Bigl(\int_0^t e^{2\pp(\ab(s-t)+W_s-W_t)}\,ds\Bigr)=\lim_{t\to\infty}
\frac{1}{\sqrt{t}}
\log \Bigl(\int_0^t e^{-2\pp(\ab(t-s)-W_{t-s})}\,ds\Bigr)\\
&\qquad=\lim_{t\to\infty}
\frac{1}{\sqrt{t}}
\log \Bigl(\int_0^t e^{-2\pp(\ab(s)-W_{s})}\,ds\Bigr)=0\\
\end{aligned}
$$
and using
\eqref{eq:fact} once more, we get that 
$$
\lim_{t\to\infty}\frac{1}{\sqrt{t}}
\Bigl(\log|X_t|-\pp\,\ab\, t\,,\,\log|Y_t|\,,\, Z_t-\ab \,t\Bigr)
=
\lim_{t\to\infty}\frac{1}{\sqrt{t}}\Bigl(\pp\,W_t\,,\,0\,,\,W_t \Bigr)
= (\pp\,\mathcal N\,,\,0\,,\,\mathcal N)
$$
in law.

\smallskip

\noindent
Case (ii): $\ab <0$. This is obtained from Case (i) by exchanging the roles
of the $x$- and $y$-coordinates.
\\[5pt]
Case (iii): $\ab=0$. We take up \eqref{eq:varid} and continue to compute, with
all identities holding in law
$$
\begin{aligned}
\lim_{t\to\infty}&\,\frac{1}{\sqrt{t}}
\Bigl(\log \sqrt{V_t(\pp)}\,,\,  \log \sqrt{V_t(-\qq)}\,,\, W_t\Bigr)\\
&=
\lim_{t\to\infty}\frac{1}{2\sqrt{t}}
\left( \log \Bigl(\int_0^t e^{2\pp W_s}\,ds\Bigr)\,,\,
\log \Bigl(\int_0^t e^{-2\qq W_s}\,ds\Bigr)\,,\, 2W_t\right)\\
&=
\lim_{t\to\infty}\frac{1}{2\sqrt{t}}
\left( \log \Bigl(t \int_0^1 e^{2\pp W_{st}}\,ds\Bigr)\,,\,
\log \Bigl(t\int_0^1 e^{-2\qq W_{st}}\,ds\Bigr)\,,\, 2W_t\right)\\
&=
\lim_{t\to\infty}\frac{1}{2\sqrt{t}}
\left( \log \Bigl(t \int_0^1 e^{2\pp \sqrt{t}\,W_{s}}\,ds\Bigr)\,,\,
\log \Bigl(t\int_0^1 e^{-2\qq \sqrt{t}\,W_{s}}\,ds\Bigr)\,,\, 2\sqrt{t}\,W_1
\right)\\[4pt]
\noalign{\noindent[setting$\; \tau = 2\sqrt{t}\,$]}
&=
\lim_{\tau\to\infty}
\left( \log \Bigl(\int_0^1 e^{\tau \pp W_s}\,ds\Bigr)^{1/\tau}\,,\,
\log \Bigl(\int_0^1 e^{-\tau\qq W_s}\,ds\Bigr)^{1/\tau}\,,\, W_1\right)
\end{aligned}
$$
since $(W_{st})_{0\leq s\leq 1}=(\sqrt{t}\,W_s)_{0\leq s\leq 1}$ in law.
Now recall that the $L^{\tau}$-norm on $C([0\,,\,1])$ converges to the $L^{\infty}$-norm
as $\tau \to \infty$. We apply this to the functions
$s \mapsto  e^{\pp W_s}$ and $s \mapsto  e^{-\qq W_s}$, respectively, and then
take logarithms. Thus, almost surely
$$
\begin{aligned}
&\log \Bigl(\int_0^1 e^{\tau\pp W_s}\,ds\Bigr)^{1/\tau}
\to \pp\, \max\{ W_s : 0 \le s \le 1\} \AND\\
&\log \Bigl(\int_0^1 e^{-\tau\qq W_s}\,ds\Bigr)^{1/\tau}
\to -\qq\, \min\{ W_s : 0 \le s \le 1\}.
\end{aligned}
$$
This leads to statement (iii).
\end{proof}

Next, with $\mathcal N$, $\underline{\mathcal M}$
and $\overline{\!\mathcal M}$ as above, we deduce the following 
central limit theorem for the distance of Brownian
motion to the origin.

\begin{thm}\label{thm:clt-dist} If $\ab \ne 0$ then 
$$
\frac{\dist_{\Sol}(\Zf_t\,,\,\of) - |\ab|\,t}{\sqrt{t}} \to \mathcal N
\quad\text{in law, as} \;t \to \infty\,.
$$
If $\ab = 0$ then 
$$
\frac{\dist_{\Sol}(\Zf_t\,,\,\of)}{\sqrt{t}} 
\to 2(\,\overline{\!\mathcal M} - \underline{\mathcal M}\,) - |\mathcal N|
\quad\text{in law, as} \;t \to \infty\,.
$$ 
\end{thm}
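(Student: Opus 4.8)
The plan is to deduce the distance central limit theorems from the coordinate-wise central limit theorem (Theorem \ref{thm:clt}) by sandwiching $\dist_{\Sol}(\Zf_t,\of)$ between the upper and lower bounds of Proposition \ref{pro:metric}. The key observation is that, after dividing by $\sqrt{t}$ or subtracting $|\ab|t$ and dividing by $\sqrt{t}$, both the upper and lower bounds converge to the same limit, so the squeeze pins down the limit of the distance itself. First I would record the scaling behaviour of the various terms appearing in those bounds: quantities of the form $(\log(1+|X_t|))/\sqrt{t}$, $(\log(1+|Y_t|))/\sqrt{t}$, and $\log\dist_{\Sol}(\of,\Zf_t)/\sqrt{t}$, and observe via Theorem \ref{thm:clt} that $\log|X_t|\sim \pp\ab\,t$ and $\log|Y_t| = o(\sqrt{t})$ when $\ab>0$ (and symmetrically when $\ab<0$), while $Z_t \sim \ab\,t$. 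The extra logarithmic correction term $\frac{1}{\pp}+\frac{1}{\qq})\log\dist_{\Sol}(\of,\Zf_t)$ in (ii) is of order $\log t$, hence negligible after dividing by $\sqrt{t}$.

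For the case $\ab\ne 0$, say $\ab>0$, I would proceed as follows. From bound (ii) one has, modulo negligible $O(\log t)/\sqrt{t}$ corrections,
\begin{equation*}
\frac{\dist_{\Sol}(\of,\Zf_t)-|\ab|t}{\sqrt t}
\ge \frac{2\frac{\log|X_t|}{\pp} + 2\frac{\log|Y_t|}{\qq} - |Z_t| - \ab t}{\sqrt t} + o(1).
\end{equation*}
Using Theorem \ref{thm:clt}(i), $\frac{2}{\pp}\log|X_t| \sim 2\ab t$, $\frac{2}{\qq}\log|Y_t| = o(\sqrt t)$, and $|Z_t| = Z_t \sim \ab t$ (since $Z_t>0$ eventually, as $\ab>0$), so the leading terms give $2\ab t - \ab t - \ab t = 0$ and the fluctuation is $\frac{Z_t-\ab t}{\sqrt t}\to\mathcal N$; more care shows the lower bound converges in law to $\mathcal N$. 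For the upper bound I would use (iii): $\dist_{\Sol}(\of,\Zf_t)\le c + \frac{2\log(1+|X_t|)}{\pp}+\frac{2\log(1+|Y_t|)}{\qq}+|Z_t|$, which after subtracting $\ab t$ and dividing by $\sqrt t$ yields the same limit $\mathcal N$ by the identical asymptotics. The squeeze then gives the claim. The case $\ab<0$ follows by the $x\leftrightarrow y$, $\pp\leftrightarrow\qq$ symmetry already used in the proof of Theorem \ref{thm:clt}(ii).

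For the case $\ab=0$ the argument is the same in spirit but the limit is a genuine functional of Brownian motion rather than a normal. Here Theorem \ref{thm:clt}(iii) gives $\frac{\log|X_t|}{\sqrt t}\to \pp\,\overline{\!\mathcal M}$, $\frac{\log|Y_t|}{\sqrt t}\to -\qq\,\underline{\mathcal M}$, and $\frac{Z_t}{\sqrt t}\to\mathcal N$, all \emph{jointly} in law. Feeding these into the lower bound (ii), the leading term $\frac{1}{\sqrt t}\bigl(2\frac{\log|X_t|}{\pp}+2\frac{\log|Y_t|}{\qq}-|Z_t|\bigr)$ converges to $2\overline{\!\mathcal M} - 2\underline{\mathcal M} - |\mathcal N| = 2(\overline{\!\mathcal M}-\underline{\mathcal M})-|\mathcal N|$, while the $\log\dist$ correction is again $O(\log t)$ and washes out; the upper bound (iii) delivers the matching limit $2\frac{\log(1+|X_t|)}{\pp}+2\frac{\log(1+|Y_t|)}{\qq}$ — wait, here (iii) has coefficient $\frac{2}{\pp},\frac{2}{\qq}$ on the logs but also an additive $|Z_t|$, so I must check the upper and lower leading constants actually agree. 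This is the point demanding genuine care: bound (iii) as displayed gives leading term $2(\overline{\!\mathcal M}-\underline{\mathcal M})+|\mathcal N|$ rather than $-|\mathcal N|$, so a naive squeeze does \emph{not} close.

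The main obstacle, therefore, is the $\ab=0$ case: the crude upper bound (iii) and lower bound (ii) do \emph{not} have matching limits, differing precisely in the sign of the $|Z_t|$ contribution. To resolve this I would not rely on (iii) directly but instead invoke the sharper upper bound (iv), which contains the $\min$ of two expressions and tracks the height excursion more faithfully; when $\ab=0$ the process $Z_t$ oscillates and the relevant geodesic is built by first climbing to height $\overline{\!\mathcal M}\sqrt t$ and descending to $\underline{\mathcal M}\sqrt t$, so the true distance grows like $2(\overline{\!\mathcal M}-\underline{\mathcal M})\sqrt t$ with the correct $-|\mathcal N|$ correction coming from the net displacement $Z_t$. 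Concretely I expect that substituting $\frac{\log|X_t|}{\pp}\to\overline{\!\mathcal M}$, $\frac{\log|Y_t|}{\qq}\to-\underline{\mathcal M}$, $Z_t/\sqrt t\to\mathcal N$ into (iv) and simplifying the $\min$ (using $\overline{\!\mathcal M}\ge 0\ge\underline{\mathcal M}$) produces exactly $2(\overline{\!\mathcal M}-\underline{\mathcal M})-|\mathcal N|$, matching the lower bound from (ii) after one verifies the analogous cancellation there. Making this matching rigorous — checking that the two sides of the squeeze genuinely coincide in law, and that the joint convergence of the triple in Theorem \ref{thm:clt}(iii) (not merely marginal convergence) is what is being used — is the delicate part of the proof.
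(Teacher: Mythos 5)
There is a genuine gap in your treatment of the case $\ab \ne 0$. Your upper bound relies on Proposition \ref{pro:metric}(iii), but that bound is not tight when $\ab \ne 0$: for $\ab > 0$ one has $\tfrac{2}{\pp}\log(1+|X_t|) \approx 2\ab t + 2\sqrt{t}\,\mathcal N$ and $|Z_t| \approx \ab t + \sqrt{t}\,\mathcal N$, so the right-hand side of (iii) has leading term $3\ab t$, and
$$
\frac{c + \tfrac{2}{\pp}\log(1+|X_t|) + \tfrac{2}{\qq}\log(1+|Y_t|) + |Z_t| - \ab t}{\sqrt{t}} \;\approx\; 2\ab\sqrt{t} + 3\mathcal N \;\longrightarrow\; +\infty\,,
$$
so your claim that (iii) ``yields the same limit $\mathcal N$ by the identical asymptotics'' is false and the squeeze does not close. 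This is exactly the same phenomenon you correctly diagnosed in the $\ab=0$ case (the sign of the $|Z_t|$ contribution in (iii) is wrong), but it afflicts the drift case just as much. The paper's proof avoids it by using Proposition \ref{pro:metric}(iv) for the upper bound in the case $\ab \ne 0$ as well: choosing the branch $\bigl|\tfrac{\log|x|}{\pp} - z\bigr| + \bigl|\tfrac{\log|y|}{\qq}\bigr|$ of the minimum, the joint convergence in Theorem \ref{thm:clt}(i) shows that $\bigl(\tfrac{\log|X_t|}{\pp} - \ab t\bigr)/\sqrt{t}$ and $(Z_t - \ab t)/\sqrt{t}$ converge to the \emph{same} normal variable $\mathcal N$, so the cross term $\bigl|\tfrac{\log|X_t|}{\pp} - Z_t\bigr|/\sqrt{t} \to 0$; the surviving term $\bigl(\bigl|\tfrac{\log|X_t|}{\pp}\bigr| - \ab t\bigr)/\sqrt{t} \to \mathcal N$ then matches the lower bound (for which the paper simply uses (i), $\dist_{\Sol}(\Zf_t,\of) \ge |Z_t|$, though your route via (ii) also works). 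This perfect correlation of the $X$- and $Z$-fluctuations is the key cancellation your $\ab \ne 0$ argument is missing.

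Your $\ab = 0$ case, by contrast, is essentially the paper's proof: lower bound from (ii) (after disposing of the $\log\dist_{\Sol}$ correction), upper bound from (iv) using the joint convergence of Theorem \ref{thm:clt}(iii), and the simplification of the minimum via $\underline{\mathcal M} \le \mathcal N \le \overline{\!\mathcal M}$ and $\underline{\mathcal M} \le 0 \le \overline{\!\mathcal M}$ giving $2(\,\overline{\!\mathcal M} - \underline{\mathcal M}\,) - |\mathcal N|$ on both sides. Had you applied the same (iv)-based upper bound in the drift case, the proposal would be complete.
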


\begin{proof}
We start with $\ab > 0$. Combining Theorem \ref{thm:clt}(i) with
Proposition \ref{pro:metric}(iv), we obtain in law
$$
\begin{aligned}
\lim_{t\to\infty}\, \frac{\dist_{\Sol}(\Zf_t\,,\,\of) - \ab\,t}{\sqrt{t}}
&\le  \lim_{t\to\infty}
\frac{\left|\frac{\log|X_t|}{\pp}+\frac{\log|Y_t|}{\qq}\right|
  + \left|\frac{\log|Y_t|}{\qq}\right|
  + \left|\frac{\log|X_t|}{\pp}-Z_t\right| -\ab t}{\sqrt{t}} \\
&= \lim_{t\to\infty}\frac{\left|\frac{\log| X_t|}{\pp} \right| -\ab t 
  + \left|\bigl(\frac{\log|X_t|}{\pp}-\ab t\bigr) 
  - \bigl(Z_t-\ab t\bigr)\right| }{\sqrt{t}}\\
&= \lim_{t\to\infty}\frac{\left|\frac{\log| X_t|}{\pp}\right| -\ab t}
  {\sqrt{t}} \qquad \quad(\text{since} \;\Prob[\log| X_t|<0]\to 0)\\
&= \lim_{t\to\infty}\frac{\frac{\log|X_t|}{\pp}  -\ab t}{\sqrt{t}}
= \mathcal{N}\,.
\end{aligned}
$$
On the other hand
$$
\lim_{t\to\infty}\frac{\dist_{\Sol}(\Zf_t\,,\,\of) - \ab\,t}{\sqrt{t}}
\ge \lim_{t\to\infty}\frac{|Z_t|-\ab t}{\sqrt{t}}= \mathcal{N}
\quad \text{in law.}
$$
When $\ab < 0$, the result follows once more by exchanging the roles
of the $x$- and $y$-coordinates.

\smallskip

Now consider the case when $\ab=0$.  Combining Theorem \ref{thm:clt}(iii) with
Proposition \ref{pro:metric}(iv), we obtain in law
$$
\begin{aligned}
\lim_{t\to\infty}\frac{\dist_{\Sol}(\Zf_t\,,\,\of)}{\sqrt{t}}
&\le \bigl|\overline{\!\mathcal M} - \underline{\mathcal M}\bigr|
 +\min\Bigl\{ \bigl|\overline{\!\mathcal M}\bigr|
   + \bigl|\mathcal{N}-\underline{\mathcal M}\bigr|\,,\, 
   \bigl|\underline{\mathcal M}\bigr|
   + \bigl|\overline{\!\mathcal M}-\mathcal{N}\bigr|\Bigr\}\\
&= 2\bigl(\,\overline{\!\mathcal M}-\underline{\mathcal M}\,\bigr)
   -|\mathcal{N}|.
\end{aligned}
$$
This upper bound together with the fact that 
$\dist_{\Sol}(\Zf_t\,,\,\of) \to \infty$ almost surely yields that 
$$
\frac{\log \dist_{\Sol}(\Zf_t\,,\,\of)}{\sqrt{t}} \to 0
$$ 
in law (and in fact almost surely). We can combine this with 
Theorem \ref{thm:clt}(iii) and Proposition \ref{pro:metric}(i), and get
the required lower bound in the case $\ab=0$.
\end{proof}

Compare this with the analogous result of \cite{Be} for simple random walk
with drift on Diestel-Leader graphs.
We next observe the following (denoting expectation by $\Ex$).

\begin{lem}\label{lem:distincrement}
Let $U_n = \max \bigl\{\dist_{\Sol}(\Zf_n\,,\,\Zf_{n+t}) : 
0 \le t \le 1\bigr\}$. Then $\Ex(U_n) < \infty\,$, and
$$
\lim_{n \to \infty} \frac{1}{\log n}
\max \bigl\{\dist_{\Sol}(\Zf_n\,,\,\Zf_{n+t}) : 
0 \le t \le 1\bigr\} = 0 \quad\text{almost surely.}
$$
\end{lem}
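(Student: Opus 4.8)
The plan is to exploit the left-invariance of the process to reduce the whole statement to a single random variable, and then to show that this variable has finite exponential moments of all orders.

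First I would record that, since the generator $\Lap_{\ab}$ is left-invariant (Lemma \ref{lem:lap-inv}) and $\dist_{\Sol}$ is invariant under the left action of $\Sol(\pp,\qq)$, the increments of $\Zf_t$ are stationary. Indeed, the transition semigroup commutes with left translations, so that conditionally on $\Zf_n = \gf$ the process $(\Zf_{n+t})_{t\ge 0}$ has the law of $\gf\cdot\wt\Zf_t$, where $\wt\Zf$ is an independent copy of Brownian motion started at $\of$. Left-invariance of the metric then gives $\dist_{\Sol}(\Zf_n,\Zf_{n+t}) = \dist_{\Sol}(\of,\wt\Zf_t)$, whence
$$
U_n \stackrel{d}{=} U_0 = \max_{0\le t\le 1}\dist_{\Sol}(\of,\Zf_t).
$$
In particular the $U_n$ are identically distributed, and it suffices to understand $U_0$.

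Next I would bound $U_0$ by Proposition \ref{pro:metric}(iii): for every $t$,
$$
\dist_{\Sol}(\of,\Zf_t) \le c + \frac{2}{\pp}\log(1+|X_t|) + \frac{2}{\qq}\log(1+|Y_t|) + |Z_t|,
$$
so that $U_0 \le c + \frac{2}{\pp}\log(1+M^X) + \frac{2}{\qq}\log(1+M^Y) + M^Z$, where $M^X=\max_{0\le t\le 1}|X_t|$, and $M^Y,M^Z$ are defined analogously. Since $Z_t=\ab t + W_t$, we have $M^Z \le |\ab| + \max_{0\le t\le 1}|W_t|$, which has finite exponential moments of all orders by the reflection principle and the Gaussian tails of $\max W$. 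For $M^X$ I would use that $X_t$ is a continuous local martingale with $\langle X\rangle_t = V_t(\pp)=\int_0^t e^{2\pp Z_s}\,ds$; the Burkholder--Davis--Gundy inequality yields, for every $p\ge 1$,
$$
\Ex\bigl[(M^X)^p\bigr] \le C_p\,\Ex\bigl[V_1(\pp)^{p/2}\bigr] \le C_p\,\Ex\bigl[e^{\pp p\max_{0\le s\le 1}Z_s}\bigr] < \infty,
$$
the last bound following again from $\max_{0\le s\le 1}Z_s \le |\ab| + \max_{0\le s\le 1}W_s$. The same argument applied to $Y_t$, whose quadratic variation is $V_1(-\qq)=\int_0^1 e^{-2\qq Z_s}\,ds \le e^{2\qq\max_{0\le s\le 1}(-Z_s)}$, gives $\Ex[(M^Y)^p]<\infty$ for all $p$.

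Combining these three facts through H\"older's inequality yields $\Ex[e^{\lambda U_0}]<\infty$ for every $\lambda>0$; in particular $\Ex(U_n)=\Ex(U_0)<\infty$, which is the first assertion. For the almost sure statement, fix $\ep>0$ and apply the Chernoff bound with $\lambda=2/\ep$:
$$
\Prob[U_n > \ep\log n] = \Prob[U_0 > \ep\log n] \le \Ex\bigl[e^{\lambda U_0}\bigr]\,n^{-\lambda\ep} = \Ex\bigl[e^{2U_0/\ep}\bigr]\,n^{-2},
$$
which is summable in $n$. Borel--Cantelli then gives $U_n \le \ep\log n$ for all large $n$ almost surely, so $\limsup_n U_n/\log n \le \ep$; letting $\ep\downarrow 0$ along a sequence and using $U_n\ge 0$ proves $U_n/\log n\to 0$ almost surely. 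I expect the main obstacle to be the control of $M^X$ and $M^Y$, i.e.\ passing from the stochastic integrals to their running maxima while keeping exponential moments alive; this is exactly where the identity $\langle X\rangle_1 = V_1(\pp)\le e^{2\pp\max_s Z_s}$, the Burkholder--Davis--Gundy inequality, and the Gaussian tails of $\max_{0\le s\le 1}W_s$ are essential, everything else being a routine use of the Markov property and Borel--Cantelli.
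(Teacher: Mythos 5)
Your proof is correct and takes essentially the same approach as the paper's: reduce to $U_0$ by stationarity of the increments, bound $U_0$ via Proposition \ref{pro:metric}(iii), obtain exponential moments by controlling the maxima of $|X_t|,|Y_t|$ with Burkholder--Davis--Gundy and the maximum of $|Z_t|$ with the reflection principle and Gaussian tails, and conclude almost surely by a Borel--Cantelli type argument. The only cosmetic differences are that the paper bounds $\Ex\bigl(V_1(\pp)^{r/2}\bigr)$ via Jensen's inequality rather than by $e^{\pp r \max_s Z_s}$, and finishes by applying the law of large numbers to the i.i.d.\ sequence $e^{rU_n}$ instead of your Chernoff bound plus Borel--Cantelli.
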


\begin{proof} 
The random variables $U_n$, $n \ge 0$, are i.i.d.
Let
$$
X_*=\max \{|X_t| :0\le t\le  1 \}\,,
\quad Y_*=\max \{|Y_t| :0\le t\le 1 \}\,,
\quad Z_*=\max \{|Z_t| :0\le t\le 1 \}\,.
$$
Then by Proposition \ref{pro:metric}(iii), 
$$
U_0 \le c + \frac{2}{\pp}\log(1+|X_*|) + \frac{2}{\qq}\log(1+|Y_*|) + |Z_*|\,.
$$
Observe that by the Burkholder--Davis--Gundy inequality 
\cite[pag. 161]{RY}, we have for every $r>2$ that
$$
\Ex(X_*^r)  \le c_1 
\Ex\left( \left(\int_0^1 e^{2\pp Z_s}\,ds\right)^{r /2}\right)
\le c_1 
\int_0^1 \Ex\bigl(e^{\pp r Z_s}\bigr)\, ds<\infty\,,
$$
where $c_1 > 0$.
The same holds for $Y_*$. For $Z_*\,$, observe that by duality
$$
\begin{aligned} 
\Prob[Z_*>z]
&=\Prob[\max\{W_t-\ab t : 0\le t\le 1\} > z \;\text{ or }\; 
\max\{-W_t+\ab t : 0\le t\le 1\} >z]\\
&\le 4\Prob[W_1>z-|\ab|\,]\,.
\end{aligned}
$$
Thus
$$
\Ex\bigl(e^{r Z_*}\bigr)
=\int_0^\infty\Prob[e^{r Z_*}>u]\, du
\le 4 \, \int_0^\infty\Prob[e^{r (W_1+|\ab|)}>u] \, du 
= \Ex\bigl(e^{r (W_1+|\ab|)}\bigr)<\infty\,.
$$
We find that for all $r > 0$,
$$
\Ex\bigl( e^{r U_n} \bigr) = \Ex\bigl( e^{r U_0} \bigr)
\le e^{c r} \Ex\Bigl( (1+X_*)^{2r/\pp}\, (1+Y_*)^{2r/\qq} \,
e^{r Z_*}\Bigr) < \infty\,.
$$
By the law of large numbers, 
$e^{r U_n}/n \to 0$ almost surely, whence
$\limsup_{n \to \infty} U_n /\log n < 1/r$ almost surely,
for all  $r > 0$.
\end{proof}

Given the (left) action on $\Sol(\pp,\qq)$ on itself
by isometries and the group-invariance of our Laplacian (Lemma 
\ref{eq:gtranslate}), we get that along any time interval $[s\,,\,t]$, 
the increment of our Brownian motion $\Zf_t= (X_t,Y_t,Z_t)$ of 
\eqref{eq:stochint} satisfies 
\begin{equation}\label{eq:increment}
\Zf_s^{-1}\Zf_t = \Zf_{t-s}\quad\text{in law,}
\end{equation}
and for an arbirary number of time intervals which
do not overlap (i.e., they meet at most at the endpoints), the
associated increments are independent.
We now also get the rate of escape for our Brownian motion with drift.

\begin{cor}\label{cor:rate} For any value of $\ab$,
$$
\lim_{t \to \infty} 
\frac{\dist_{\Sol}(\Zf_t\,,\,\of)}{t} = |\ab| \quad\text{almost surely.}
$$ 
\end{cor}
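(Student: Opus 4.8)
The plan is to upgrade the weak law of large numbers already contained in Theorem~\ref{thm:clt-dist} to almost sure convergence, exploiting the subadditive structure of $t \mapsto \dist_{\Sol}(\of,\Zf_t)$ together with the stationary independent increments \eqref{eq:increment}. Write $D_t = \dist_{\Sol}(\of,\Zf_t)$. First I would observe that Theorem~\ref{thm:clt-dist} gives $D_t/t \to |\ab|$ \emph{in probability}: when $\ab \ne 0$ we have $(D_t - |\ab|\,t)/\sqrt t \to \mathcal N$ in law, so
$$
\frac{D_t}{t} - |\ab| = \frac{1}{\sqrt t}\cdot\frac{D_t - |\ab|\,t}{\sqrt t} \to 0 \quad\text{in probability,}
$$
the second factor being bounded in probability and the first tending to $0$; when $\ab = 0$, the random variable $D_t/\sqrt t$ converges in law and hence is tight, so again $D_t/t \to 0 = |\ab|$ in probability. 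It then remains only to prove that $D_t/t$ converges almost surely, since the almost sure limit must agree with the limit in probability.

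For the almost sure statement I would work first along integer times. By left-invariance of the metric and \eqref{eq:increment}, for $m < m+n$ one has $\dist_{\Sol}(\Zf_m,\Zf_{m+n}) = \dist_{\Sol}(\of,\Zf_m^{-1}\Zf_{m+n})$, which is equal in law to $D_n$, while increments over disjoint time intervals are independent. The triangle inequality yields the subadditivity $D_{m+n} \le D_m + \dist_{\Sol}(\Zf_m,\Zf_{m+n})$, and the single-step increment is integrable, $\Ex(D_1) \le \Ex(U_0) < \infty$ by Lemma~\ref{lem:distincrement}. Thus the family $\{\dist_{\Sol}(\Zf_m,\Zf_{m+n})\}_{m<m+n}$ satisfies the hypotheses of Kingman's subadditive ergodic theorem, and $D_n/n$ converges almost surely and in $L^1$ to some limit $\ga$. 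I would stress that one need not verify ergodicity of the increment sequence in order to pin down $\ga$: almost sure convergence forces convergence in probability, so comparing with the previous paragraph gives $\ga = |\ab|$ almost surely.

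Finally I would pass from integer to real times. By Lemma~\ref{lem:distincrement},
$$
\sup_{0 \le s \le 1}\bigl|D_{n+s} - D_n\bigr| \le \max\bigl\{\dist_{\Sol}(\Zf_n,\Zf_{n+s}) : 0 \le s \le 1\bigr\} = o(\log n) = o(n)
$$
almost surely, so for $t \in [n,n+1]$ we have $D_t/t = D_n/n + o(1)$, and therefore $D_t/t \to |\ab|$ almost surely as $t \to \infty$.

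The main obstacle, I expect, is conceptual rather than computational: the triangle inequality delivers only \emph{subadditivity}, not additivity, so a naive application of the strong law to the telescoping sum $\sum_{k=0}^{n-1}\dist_{\Sol}(\Zf_k,\Zf_{k+1})$ would give merely $\limsup_n D_n/n \le \Ex(D_1)$, an upper bound that is in general strictly larger than $|\ab|$. The correct constant $\inf_n \Ex(D_n)/n = |\ab|$ surfaces only through the subadditive ergodic theorem, and is then identified through the central limit theorem; recognizing that these two ingredients together close the argument is the crux.
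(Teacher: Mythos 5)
Your proposal is correct and follows essentially the same route as the paper: Kingman's subadditive ergodic theorem (using the stationary independent increments \eqref{eq:increment} and the integrability from Lemma \ref{lem:distincrement}) gives almost sure convergence of $\dist_{\Sol}(\Zf_t,\of)/t$, and Theorem \ref{thm:clt-dist} identifies the limit in probability, hence almost surely. Your write-up merely makes explicit two steps the paper leaves implicit, namely the tightness argument extracting convergence in probability from the CLT and the passage from integer to real times.
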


\begin{proof}
In view of Lemma \ref{lem:distincrement} and the spatial homogeneity 
\eqref{eq:increment}, the subadditive 
ergodic theorem of {\sc Kingman}~\cite{Ki} implies that $\dist_{\Sol}(\Zf_t\,,\,\of)/t$ converges almost
surely to a constant. (Compare with {\sc Derriennic}~\cite{De}
for the case of discrete time.)
Theorem \ref{thm:clt-dist} implies that the limit is $\ab$ in probability,
whence also with probability $1$.
\end{proof}

\section{Convergence to the boundary at infinity, and the deviation from the
limit geodesic} \label{sect:boundary}

The natural geometric compactification of the hyperbolic plane, in the unit disk
model, is just the closed (Euclidean) disk. In the upper half plane model
$\Hb(\pp)$, the boundary at infinity $\vartheta \Hb(\pp)$ of the compactification
$\wh \Hb(\pp)$ is obtained by adding the bottom line $\vartheta^* \Hb(\pp)=\R$ and the point
at infinity, denoted here by $\varpi_{\pp}\,.$
In the logarithmic model,  convergence to the boundary is as follows:
we have that $(x,z) \to \xi \in \vartheta^* \Hb(\pp)$
when $z \to -\infty$ and $x \to \xi$, and $(x,z) \to \varpi_{\pp}$ if
$|x| + e^z \to \infty$.

Now $\Sol(\pp,\qq)$ embeds into $\Hb(\pp) \times \Hb(\qq)$ via \eqref{eq:horpro}.
Therefore the most natural geometric compactification $\wh \Sol(\pp,\qq)$ of
$\Sol(\pp,\qq)$ is its closure in the compact bidisk $\wh \Hb(\pp) \times \wh \Hb(\qq)$.
(``Bidisk'' because when we use the unit disk model of hyperbolic plane,
this is just the direct product of two closed unit disks.) We assemble a brief
description of convergence to the boundary in the next lemma; no proof is required.
We underline once more the analogy with Diestel-Leader graphs \cite{Wo-lamp} and
treebolic space \cite{BSSW2}. As pointed out in the Introduction, the 
boundary at infinity is topologically a filled number ``8'', that is, two closed
disks glued
together at a single point. This sheds some light on the observations made
by {\sc Lyons and Sullivan}~\cite{LySu}.

\begin{lem} The boundary at infinity $\vartheta \Sol(\pp,\qq)$ of $\Sol(\pp,\qq)$ is
$$
\Bigl(\underbrace{\vartheta^* \Hb(\pp)}_{\displaystyle \R} 
\times \{ \varpi_{\qq} \}\Bigr)
\cup
\Bigl(\{ \varpi_{\pp} \} \times 
\underbrace{\vartheta^* \Hb(\qq)}_{\displaystyle \R}\Bigr)
\cup
\Bigl(\Hb(\pp)\times\{ \varpi_{\qq} \} \Bigr)
\cup
\Bigl(\{ \varpi_{\pp} \} \times \Hb(\qq)\Bigr)
\cup\;\Bigl\{ (\varpi_{\pp},\varpi_{\qq})\Bigr\}\,.
$$
Convergence to the boundary is as follows.
In general, 
$$
\xf = (x,y,z) \to (\xi, \eta) \in \vartheta \Sol(\pp,\qq)\,,\quad
\text{if}\quad (x,z) \to \xi \;\text{ in }\;  \wh \Hb(\pp) \;\text{ and }\;
(y,-z) \to \eta \;\text{ in }\;  \wh \Hb(\qq).
$$ 
This means that
$$
\begin{aligned}
\xf = (x,y,z) &\to (\xi, \varpi_{\qq}) 
\in \vartheta^* \Hb(\pp) \times \{ \varpi_{\qq} \}\,,\quad\text{if}\quad
x \to \xi\;\text{ and }\;z \to -\infty\,,\\
\xf = (x,y,z) &\to (\varpi_{\pp},\eta)
\in \{ \varpi_{\pp} \} \times \vartheta^* \Hb(\qq)\,,\quad\text{if}\quad
y \to \eta \;\text{ and }\; z\to \infty\,,\\
\xf = (x,y,z) &\to \bigl((x_0,z_0), \varpi_{\qq}\bigr) 
\in \Hb(\pp) \times \{ \varpi_{\qq} \}\,,\quad\text{if}\quad
x \to x_0\,,\; z \to z_0\;\text{ and }\;|y| \to \infty\,,\\
\xf = (x,y,z) &\to \bigl(\varpi_{\pp},(y_0,-z_0)\bigr)
\in \{ \varpi_{\pp} \} \times \Hb(\qq)\,,\quad\text{if}\quad
y \to y_0\,,\; z \to -z_0 \;\text{ and }\; |x|\to \infty\,,\\
\xf = (x,y,z) &\to (\varpi_{\pp}, \varpi_{\qq})\,,\quad\text{if}\quad
|x| + e^z \to \infty\;\text{ and }\; |y| + e^{-z} \to \infty\;\,\text{ in }\;\R\,.
\end{aligned}
$$
\end{lem}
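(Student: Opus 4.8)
The plan is to exploit that, by construction, $\wh\Sol(\pp,\qq)$ is the closure of $\Sol(\pp,\qq)$ inside the product compactification $\wh\Hb(\pp)\times\wh\Hb(\qq)$, so that convergence is coordinatewise: a sequence $\xf=(x,y,z)$ in $\Sol(\pp,\qq)$ converges to a pair $(\xi,\eta)$ exactly when $\pi_1(\xf)=(x,z)\to\xi$ in $\wh\Hb(\pp)$ and $\pi_2(\xf)=(y,-z)\to\eta$ in $\wh\Hb(\qq)$. This is precisely the ``in general'' assertion, and it reduces the entire statement to the single--factor convergence criteria recalled at the beginning of the section. What then remains is, first, to determine which pairs $(\xi,\eta)$ actually arise as boundary limits, and second, to translate the factorwise criteria into the five explicit conditions on $(x,y,z)$.

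To identify $\vartheta\Sol(\pp,\qq)$ I would pass to a subsequence along which $z$ has a limit in $[-\infty,+\infty]$ and argue by cases. The one and only coupling between the two hyperbolic factors is the shared height, which enters $\Hb(\pp)$ as $z$ and $\Hb(\qq)$ as $-z$; this sign reversal is the heart of the matter and is what forces the boundary to be a filled ``8'' rather than a full product of the two single--factor boundaries. If $z\to+\infty$, then $|x|+e^{z}\to\infty$ compels $\pi_1(\xf)\to\varpi_\pp$, while the height $-z\to-\infty$ in the second factor lets $\pi_2(\xf)$ tend either to a bottom point $\eta\in\vartheta^*\Hb(\qq)$ (when $y\to\eta$) or to $\varpi_\qq$ (when $|y|\to\infty$); the case $z\to-\infty$ is symmetric; and if $z\to z_0$ is finite, each factor tends either to an interior point or, once its lateral coordinate blows up, to its top point. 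The essential point to check is that the sign reversal genuinely excludes the missing combinations: a bottom point $\xi\in\vartheta^*\Hb(\pp)$ requires $z\to-\infty$, which forces $-z\to+\infty$ and hence $\pi_2(\xf)\to\varpi_\qq$, so no bottom point of $\Hb(\pp)$ can be paired with a bottom or interior point of $\Hb(\qq)$, and vice versa; likewise two interior heights would require $z\to z_0$ and $-z\to -z_0$ simultaneously, putting the limit back inside $\Sol(\pp,\qq)$ rather than on its boundary. Collecting the surviving possibilities yields exactly the five listed pieces, and this verification of the forbidden regimes is the one genuinely nontrivial step.

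It remains to confirm that every piece is actually attained and to read off the explicit conditions. For attainment one exhibits simple sequences: $(\xi,0,-n)\to(\xi,\varpi_\qq)$ realises the first piece, $(0,\eta,n)\to(\varpi_\pp,\eta)$ the second, $(x_0,n,z_0)\to\bigl((x_0,z_0),\varpi_\qq\bigr)$ the third, $(n,y_0,z_0)\to\bigl(\varpi_\pp,(y_0,-z_0)\bigr)$ the fourth, and $(n,n,0)\to(\varpi_\pp,\varpi_\qq)$ the glueing point. Finally, substituting the recalled single--factor criteria — that $(x,z)\to\xi\in\R$ iff $z\to-\infty$ and $x\to\xi$, and $(x,z)\to\varpi_\pp$ iff $|x|+e^{z}\to\infty$, together with the analogues for $\Hb(\qq)$ obtained by using the height $-z$ — into the coordinatewise description turns each of the five cases into its displayed condition on $(x,y,z)$, completing the proof.
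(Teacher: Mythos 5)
Your proof is correct and is essentially the argument the paper has in mind: the authors state this lemma with the remark that ``no proof is required,'' precisely because the compactification is by definition the closure in the bidisk $\wh\Hb(\pp)\times\wh\Hb(\qq)$, so everything reduces to coordinatewise convergence together with the height-reversal constraint $\hor(\uf)+\hor(\vf)=0$, which is exactly the case analysis you carry out. Your writeup simply makes explicit what the paper treats as immediate — the identification of the five admissible pairs, the exclusion of the forbidden combinations via the sign reversal of $z$, and the attainment of each boundary piece by explicit sequences — and all of these steps are accurate.
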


A \emph{geodesic ray} is a continuous mapping $\gab: [0\,,\,\infty) \to\Sol$
(or to any of our other spaces) such that 
$\dist\bigl(\gab(t)\,,\,\gab(s)\bigr) = |t-s|$ for all $s, t$.
Its starting point is $\gab(0)$.
For any $(x_0,z_0) \in \Hb(\pp)$ and $\xi \in \vartheta \Hb(\pp)$, there is
a unique geodesic ray $\bigl(x(t),z(t)\bigr)$ that starts at $(x_0, z_0)$ and
converges to $\xi$. In the case when $\xi = \varpi_{\pp}$ then this is
the upwards going vertical half-line $t \mapsto (x_0, z_0+t)$ in $\Hb(\pp)$.

For $\xf = (x_0,y_0,z_0) \in \Sol(\pp,\qq)$ and a boundary point
$(\varpi_{\pp},\eta) \in \{ \varpi_{\pp} \} \times \vartheta^* \Hb(\qq)$
of $\Sol(\pp,\qq)$, we can consider the (unique) upwards geodesic ray 
starting at $\xf$ given by $\gab_{\xf}^{\eta}(t) 
= \bigl(x_0, y(t), z(t)\bigr)$, where $\bigl(y(t), -z(t)\bigr)_{t\ge 0}$ 
is the geodesic ray from $(y_0,-z_0)$ to $\eta$ in $\Hb(\qq)$.

Analogously, for a boundary point 
$(\xi, \varpi_{\qq}) \in \vartheta^* \Hb(\pp) \times \{ \varpi_{\qq} \}$,
we have the (unique) downwards geodesic ray 
starting at $\xf$ given by $\gab_{\xf}^{\xi}(t) 
= \bigl(x(t), y_0, z(t)\bigr)$, where $\bigl(x(t), z(t)\bigr)_{t\ge 0}$ 
is the geodesic ray from $(x_0,z_0)$ to $\xi$ in $\Hb(\pp)$.
All those geodesics converge to their defining boundary points, as 
$t \to \infty$, and any two geodescis that converge to the same boundary point
are at bounded Hausdorff distance. This is true because it holds in the
hyperbolic plane. 

In the first of the above two cases, it will be most convenient to use
the initial point $\xf = (0,\eta,0)$, and omit the index $\xf$ in that
case. Thus, we can parametrise by $z \ge 0$ and get
$\gab^{\eta}(z)= (0,\eta,z)$. 
Analogously, in the second case, we use the standard initial point 
$\xf = (\xi,0,0)$ and get the corresponding geodesic ray
$\gab^{\xi}(z)= (\xi,0,-z)$, again parametrised by
$z \ge 0$. 
We call these the (upwards, resp. downwards) \emph{vertical} geodesic
rays. We remark that there is no geodesic ray in $\Sol$ 
from any starting point that converges to $(\varpi_{\pp},\varpi_{\qq})$.

Compare with \cite{EFW1}, \cite{EFW2} for further details on the 
geometry.

Let us return to our Brownian motion $\Zf_t= (X_t,Y_t,Z_t)$ of 
\eqref{eq:stochint}. 
 
\begin{pro}\label{pro:finlim}
\emph{(i)} If $\ab > 0$ then 
$$
\lim_{t\to \infty} Y_t = Y_{\infty} 
= \int_0^{\infty} e^{-\qq Z_s}\, dW^{(2)}_s \quad \text{almost
surely.}
$$
That is, $\Zf_t \to (\varpi_{\pp}\,,Y_{\infty}) \in \vartheta \Sol(\pp,\qq)$
almost surely in the topology of $\wh \Sol(\pp,\qq)$.\\[5pt]
\emph{(ii)} If  $\ab < 0$ then 
$$
\lim_{t\to \infty} X_t = X_{\infty} 
= \int_0^{\infty} e^{\pp Z_s}\, dW^{(1)}_s \quad \text{almost
surely.}
$$
That is, $\Zf_t \to (X_{\infty}\,,\varpi_{\qq})$
almost surely in the topology of $\wh \Sol(\pp,\qq)$.\\[5pt] 
In both cases \emph{(i)} and \emph{(ii)}, the respective limiting random variable is a.s. 
finite.
\\[5pt]
\emph{(iii}) If $\ab=0$ then 
$$
|X_t| + e^{Z_t} \to \infty \AND |Y_t| + e^{-Z_t} \to \infty \quad
\text{almost surely.}
$$ 
That is, $\Zf_t \to (\varpi_{\pp}\,,\varpi_{\qq})$
almost surely in the topology of $\wh \Sol(\pp,\qq)$.
\end{pro}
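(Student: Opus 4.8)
The plan is to dispose of the drifted cases (i), (ii) by soft arguments and to isolate the only genuine difficulty in the driftless case (iii).

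\emph{Cases (i) and (ii).} For $\ab>0$ the strong law of large numbers for Brownian motion gives $Z_s=\ab s+W_s\to+\infty$ a.s., with $Z_s/s\to\ab>0$. Hence the terminal quadratic variation of $Y_t$, namely $V_\infty(-\qq)=\int_0^\infty e^{-2\qq Z_s}\,ds$, is finite a.s.; this is exactly the a.s.\ convergence recorded in the proof of Theorem \ref{thm:clt}(i), applied with $\alpha=2\qq\ab>0$ and $\beta=-2\qq$. By the time change \eqref{eq:XYB}, $Y_t=B^{(2)}_{V_t(-\qq)}$ with $V_t(-\qq)\uparrow V_\infty(-\qq)<\infty$, so $Y_t\to Y_\infty:=B^{(2)}_{V_\infty(-\qq)}$ a.s.\ by continuity of $B^{(2)}$ (equivalently: a continuous local martingale converges a.s.\ on the event that its quadratic variation is finite). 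This limit is a.s.\ finite and equals $\int_0^\infty e^{-\qq Z_s}\,dW^{(2)}_s$. Since simultaneously $Z_t\to+\infty$, the criterion for convergence to the boundary described above yields $\Zf_t\to(\varpi_\pp,Y_\infty)$. Case (ii) is identical after exchanging $x\leftrightarrow y$, $\pp\leftrightarrow\qq$ and $z\leftrightarrow -z$.

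\emph{Reduction of (iii).} Now $\ab=0$, so $Z_t=W_t$ is recurrent: $\limsup_t W_t=+\infty$ and $\liminf_t W_t=-\infty$ a.s., whence $V_\infty(\pp)=V_\infty(-\qq)=+\infty$ a.s. By the $x\leftrightarrow y$ symmetry it suffices to prove $|X_t|+e^{Z_t}\to\infty$ a.s. Because $W_t$ is recurrent it converges to neither $+\infty$ nor $-\infty$, which rules out an \emph{honest} limit of $\Zf_t$ on $\vartheta^*\Hb(\pp)\times\{\varpi_\qq\}$ or $\{\varpi_\pp\}\times\vartheta^*\Hb(\qq)$; crucially, however, it does \emph{not} rule these out, nor the piece $\Hb(\pp)\times\{\varpi_\qq\}$, as \emph{subsequential} limits. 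Each offending possibility amounts to a sequence $t_k\to\infty$ along which $W_{t_k}$ stays bounded above while $|X_{t_k}|$ stays bounded. Thus the whole statement reduces to the claim: \emph{for every $C$, almost surely $|X_t|\to\infty$ along the times $t$ with $W_t\le C$.}

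\emph{Proof of the claim (the crux).} I would condition on the path $W$, which is independent of $B^{(1)}$ and which determines the clock $V_\cdot(\pp)$ and the ($W$-measurable) set $S=\{V_t(\pp):W_t\le C\}\subset[0,\infty)$. By \eqref{eq:XYB}, $X_t=B^{(1)}_{V_t(\pp)}$, so the claim reads $|B^{(1)}_v|\to\infty$ as $v\to\infty$ along $v\in S$. The geometric input is that $V_t(\pp)$ is dominated by the running maximum $\overline W_t=\max_{0\le s\le t}W_s$: since $\overline W_t\to\infty$ and, by the reflection principle $\Prob[\overline W_t<a]=\Prob[|W_t|<a]\le 2a/\sqrt{2\pi t}$ together with Borel--Cantelli along $t=2^n$, one has $\overline W_t\ge t^{1/2-\ep}$ eventually, the quantity $V_t(\pp)\ge e^{2\pp\overline W_t-o(\overline W_t)}$ grows faster than any power of $t$. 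This forces $S$ to be sparse at infinity, quantitatively $\int_S v^{-1/2}\,dv=\int_{\{W_t\le C\}}V_t(\pp)^{-1/2}\,dV_t(\pp)<\infty$ a.s.\ (on $\{W_t\le C\}$ the factor $e^{2\pp W_t}$ is bounded while $V_t(\pp)^{-1/2}$ decays super-polynomially). Conditionally on $W$, a first-moment estimate for $B^{(1)}$ using $\Prob[\,|B^{(1)}_v|\le A\,]\le CA/\sqrt v$ and the finiteness of $\int_S v^{-1/2}\,dv$ shows that for each fixed $A$ the set $S\cap\{|B^{(1)}_v|\le A\}$ is a.s.\ bounded; letting $A\to\infty$ gives $|B^{(1)}_v|\to\infty$ along $S$. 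The symmetric statement for $Y_t$ (times with $W_t\ge -C$) follows by replacing $W$ with $-W$ and $\pp$ with $\qq$, and the two together give $\Zf_t\to(\varpi_\pp,\varpi_\qq)$.

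The main obstacle is exactly this crux: extracting a good almost-sure lower bound on the running maximum $\overline W_t$, hence the super-polynomial growth of $V_t(\pp)$ and the sparsity of $S$, and then converting that sparsity into the almost-sure escape of the independent time-changed motion $B^{(1)}$ along $S$. Everything else --- the finite quadratic variation in (i) and (ii), the recurrence of $W$, and the reduction of (iii) to the crux --- is soft.
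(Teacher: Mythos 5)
Your treatment of cases (i) and (ii) is correct and coincides with the paper's own argument: the paper disposes of these cases by quoting [RY, Prop.\ 1.26], which is exactly the fact you use (a continuous local martingale converges a.s.\ on the event that its total quadratic variation is finite, here guaranteed by $Z_s/s\to\ab\neq 0$). Your reduction of case (iii) to the claim that, for every $A$ and $C$, almost surely the set of times $t$ with $W_t\le C$ and $|X_t|\le A$ is bounded, is also correct, as is the conditioning setup ($B^{(1)}$ is indeed independent of $W$) and the sparsity estimate $\int_S v^{-1/2}\,dv<\infty$. The gap is at the step you yourself call the crux. The only way to combine the pointwise bound $\Prob\bigl[|B^{(1)}_v|\le A\bigr]\le CAv^{-1/2}$ with the finiteness of $\int_S v^{-1/2}\,dv$ is Fubini, which yields
$$
\Ex\Bigl[\mathrm{Leb}\bigl(S\cap\{v: |B^{(1)}_v|\le A\}\bigr)\,\Big|\, W\Bigr]
\;\le\; CA\int_S v^{-1/2}\,dv \;<\;\infty\,,
$$
so the bad set $T_A=S\cap\{|B^{(1)}|\le A\}$ has a.s.\ \emph{finite Lebesgue measure}. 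But a set of finite measure need not be bounded, and boundedness of $T_A$ is precisely what your claim requires.

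Moreover, this is not a repairable technicality within your framework: the hypothesis $\int_S v^{-1/2}\,dv<\infty$ alone is genuinely insufficient to conclude $|B^{(1)}_v|\to\infty$ along $S$. Take the deterministic set $S=\bigcup_n [a_n, a_n+2^{-n}]$ with $a_n=(n\log n)^2$. Then $\int_S v^{-1/2}\,dv\le \sum_n 2^{-n}a_n^{-1/2}<\infty$, yet $\sum_n \Prob\bigl[|B^{(1)}_{a_n}|\le A\bigr]\asymp \sum_n 1/(n\log n)=\infty$, and since $\Prob[E_j\cap E_k]\le \Prob[E_j]\cdot 2A/\sqrt{2\pi(a_k-a_j)}$ makes these events quasi-independent for widely spaced indices, Kochen--Stone gives that infinitely many of the events $E_n=\{|B^{(1)}_{a_n}|\le A\}$ occur with positive probability. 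So any correct proof must exploit finer structure of the particular random set $S=V(\{W\le C\})$ than its sparsity in measure --- and that is exactly where the real difficulty of the recurrent case sits. The paper does not attempt a direct argument: it regards the discrete skeleton $(X_n,Z_n)$ as a \emph{centered} right random walk on the affine group $\Aff(\pp)$ (in the notation of [Bro], $A_1=e^{\pp Z_1}$, $B_1=X_1$, with $\Ex[\log A_1]=0$) and invokes Brofferio's theorem [Bro, Thm.\ 1] --- a substantial result occupying a separate paper --- to conclude $(X_n,Z_n)\to\varpi_{\pp}$ a.s.; Lemma \ref{lem:distincrement} then interpolates from integer to continuous time. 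Your paragraph would, in effect, have to reprove that theorem, and it breaks exactly at the measure-versus-boundedness step.
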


\begin{proof}
(i) and (ii) are immediate from the representation \eqref{eq:stochint} via 
\cite[Prop. 1.26]{RY}. 


For (iii), consider $\Xf_t = (X_t, Z_t)$ as a process on the affine group 
$\Aff(\pp)$ of \eqref{eq:affmatrix}.

It also satisfies \eqref{eq:increment} and \eqref{lem:distincrement}.
We consider our process at discrete times:
\begin{equation}\label{eq:xfn}
\Xf_n = (X_n,Z_n)
 = \Xf_1 \cdot (\Xf_1^{-1}\Xf_2) \cdots (\Xf_{n-1}^{-1}\Xf_n)
\end{equation}
is a right random walk on $\Aff(\pp)$.
We can apply a result of {\sc Brofferio}~\cite{Bro}. In the notation of
\cite{Bro}, $A_1 = e^{\pp Z_1}$ and $B_1 = X_1$. 
Since the expectation of
$\log A_1$ is $0$,
and all moment conditions of \cite[Thm. 1]{Bro} are satisfied,
$\Xf_n \to \varpi_{\pp}$ almost surely in $\Hb(\pp)$, as $n \to \infty$ in $\Z$.
By Lemma \eqref{lem:distincrement}, also $\Xf_t \to \varpi_{\pp}$ almost surely,
as $t \to \infty$ in $\R$.

In the same way, $(Y_t,-Z_t) \to  \varpi_{\qq}$ almost surely,
as $t \to \infty$ in $\R$. Statement (iii) follows.
\end{proof}

Thus, when $\ab > 0$, we have the vertical \emph{limit geodesic} 
$\gab^{Y_{\infty}}$ to whose limit point our Brownian motion converges, and when
$\ab < 0$ we have to replace this by $\gab^{X_{\infty}}$. In order to simplify
notation, we just write ${\gab^{\infty}}$ for the respective limit
geodesic in each of those cases.

We now prove that
when $\ab \ne 0$, the convergence of $\Zf_t$ to its boundary limit is
very straight, in the sense that its deviation from ${\gab^{\infty}}$
is of the order of $\log t$. 

\begin{thm}\label{thm:log-dist}
If $\ab \ne 0$ then Brownian motion on $\Sol(\pp,\qq)$ satisfies
$$
\limsup_{t \to \infty} \frac{1}{\log t}\, 
\dist\bigl( \Zf_t\,,\, \gab^{\infty}\bigr) \le 
\frac{2}{|\ab|}\quad\text{almost surely,}
$$
where 
$\dist\bigl( \Zf_t\,,\, \gab^{\infty}\bigr) = 
\inf \bigl\{\dist\bigl( \Zf_t\,,\, \gab^{\infty}(u)\bigr) : u \ge 0 \bigr\}$.
\end{thm}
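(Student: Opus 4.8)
The plan is to treat $\ab>0$; the case $\ab<0$ follows by interchanging the roles of the coordinates $x,y$ and of $\pp,\qq$, replacing the upward limit geodesic $\gab^{Y_\infty}$ by the downward one $\gab^{X_\infty}$. By Proposition \ref{pro:finlim}(i) the limit geodesic is $\gab^{\infty}(u)=\gab^{Y_\infty}(u)=(0,Y_\infty,u)$, $u\ge0$. Since $Z_t\to+\infty$, for all large $t$ the height $u=Z_t$ is admissible, and by left-invariance of the metric I would estimate
$$
\dist_\Sol\bigl(\Zf_t,\gab^{\infty}\bigr)\le \dist_\Sol\bigl(\Zf_t,\gab^{Y_\infty}(Z_t)\bigr)
=\dist_\Sol\bigl(\of,\Zf_t^{-1}(0,Y_\infty,Z_t)\bigr).
$$
A direct computation with the group law \eqref{eq:product} gives $\Zf_t^{-1}(0,Y_\infty,Z_t)=(\xi_t,\zeta_t,0)$, where $\xi_t=-e^{-\pp Z_t}X_t$ and $\zeta_t=e^{\qq Z_t}(Y_\infty-Y_t)$, so that by Proposition \ref{pro:metric}(iii) at height $0$,
$$
\dist_\Sol\bigl(\Zf_t,\gab^{\infty}\bigr)\le c+\frac{2}{\pp}\log\bigl(1+|\xi_t|\bigr)+\frac{2}{\qq}\log\bigl(1+|\zeta_t|\bigr).
$$
Thus everything reduces to the almost-sure growth of $\log^+|\xi_t|$ and $\log^+|\zeta_t|$.

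The essential observation is that, conditionally on the path of the driving motion $W$ (equivalently of $Z$), both $\xi_t$ and $\zeta_t$ are centred Gaussian, because $W^{(1)},W^{(2)}$ are independent of $W$. Indeed, given $W$ the integral $X_t=\int_0^te^{\pp Z_s}dW^{(1)}_s$ is a Wiener integral, whence $\xi_t\mid W\sim N(0,I_t)$ with $I_t=\int_0^te^{2\pp(Z_s-Z_t)}ds$, and likewise $\zeta_t\mid W\sim N(0,J_t)$ with $J_t=\int_t^\infty e^{-2\qq(Z_s-Z_t)}ds$. I would then identify $I_t,J_t$ as exponential functionals of Brownian motion with positive drift: substituting $r=s-t$ gives $J_t=\int_0^\infty e^{-2\qq(\ab r+(W_{t+r}-W_t))}\,dr$, of the same $t$-independent law as $\int_0^\infty e^{-2\qq(\ab r+W_r)}\,dr$; and reversing time, $r=t-s$, gives $I_t=\int_0^te^{-2\pp(\ab r+(W_t-W_{t-r}))}\,dr\le\int_0^\infty e^{-2\pp(\ab r+\widehat W_r)}\,dr$ for a Brownian motion $\widehat W$, again of a fixed law. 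By the classical moment bound for such functionals (see \cite{RY}), $\Ex[J_t^{\,p}]$ is bounded in $t$ for $p<\ab/\qq$ and $\Ex[I_t^{\,p}]$ is bounded in $t$ for $p<\ab/\pp$.

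Using the conditional Gaussianity, for any $\theta>0$ and a standard normal $G$ I would write $\Prob[\,|\zeta_t|>t^{\theta}\,]\le\Prob[J_t>t^{(2-\de)\theta}]+\Prob[\,|G|>t^{\de\theta/2}]$; the second term is super-polynomially small, and Markov applied to the first gives $\Prob[\,|\zeta_t|>t^{\theta}\,]\le C\,t^{-(2-\de)\theta p}$ for $p<\ab/\qq$. This is summable along $t=n\in\mathbb{N}$ as soon as $(2-\de)\theta p>1$, and letting $\de\to0$, $p\uparrow\ab/\qq$ yields summability for every $\theta>\qq/(2\ab)$. Borel--Cantelli then gives $\limsup_n\log^+|\zeta_n|/\log n\le \qq/(2\ab)$ almost surely, and the identical argument with $I_t$ gives $\limsup_n\log^+|\xi_n|/\log n\le \pp/(2\ab)$. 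Substituting into the displayed bound yields, along integers,
$$
\limsup_{n\to\infty}\frac{\dist_\Sol(\Zf_n,\gab^{\infty})}{\log n}\le \frac{2}{\pp}\cdot\frac{\pp}{2\ab}+\frac{2}{\qq}\cdot\frac{\qq}{2\ab}=\frac{2}{\ab}.
$$
Finally I would pass to real times via Lemma \ref{lem:distincrement}: for $t\in[n,n+1]$ one has $\dist_\Sol(\Zf_t,\gab^{\infty})\le U_n+\dist_\Sol(\Zf_n,\gab^{\infty})$ with $U_n/\log n\to0$ almost surely, so the continuous-time $\limsup$ coincides with the one along integers.

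I expect the main obstacle to be the middle step: verifying that $\xi_t$ and $\zeta_t$ are conditionally Gaussian with variances that are genuine Brownian exponential functionals, and then extracting from the merely \emph{polynomial} moments of those functionals a Borel--Cantelli estimate whose exponent lands precisely on $2/|\ab|$. The remaining ingredients---reduction to the origin by left-invariance, the distance estimate of Proposition \ref{pro:metric}(iii), and the interpolation to continuous time---are routine.
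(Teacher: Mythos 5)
Your proof is correct, and it is worth noting where it coincides with and where it departs from the paper's own argument. The outer skeleton is the same: reduction to $\ab>0$, the bound $\dist_{\Sol}\bigl(\Zf_t,\gab^{\infty}\bigr)\le \dist_{\Sol}\bigl(\of,\Zf_t^{-1}(0,Y_\infty,Z_t)\bigr)$ by left-invariance, the application of Proposition \ref{pro:metric}(iii) to the two ``rescaled'' coordinates $e^{-\pp Z_t}X_t$ and $e^{\qq Z_t}(Y_\infty-Y_t)$, Borel--Cantelli along integer times with target exponents $\pp/2\ab$ and $\qq/2\ab$, and interpolation to continuous time via Lemma \ref{lem:distincrement}. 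The genuinely different step is the tail estimate. The paper identifies $e^{\qq Z_t}(Y_\infty-Y_t)$ in law with $Y_\infty$, views $(Y_n,-Z_n)$ as a random walk on the affine group, and invokes Kesten's theorem \cite{Kes} on random difference equations to obtain the tail $\Prob[|Y_\infty|>y]\asymp y^{-2\ab/\qq}$; moreover, since $e^{-\pp Z_t}X_t$ is not itself a martingale limit, the $x$-coordinate there requires an extra decomposition into $\wt X_t$ and an independent copy $\overline{X}_{t,\infty}$ together with the identity $\Ex\bigl(e^{-\kappa(\pp)\pp Z_1}\bigr)=1$. You instead condition on the driving motion $W$, under which both coordinates are centred Gaussian with conditional variances $I_t$, $J_t$ that are exponential functionals of Brownian motion with positive drift, stochastically dominated by (or equal in law to) a fixed such functional; the Dufresne-type fact that these functionals have finite moments exactly of order $p<\ab/\pp$ (resp.\ $p<\ab/\qq$) then feeds Markov's inequality plus the Gaussian tail, and the factor $2$ coming from the square root in the Gaussian scaling reproduces precisely the exponents $\pp/2\ab$ and $\qq/2\ab$. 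Your route is more self-contained: it avoids the renewal-theoretic input of Kesten's theorem and treats the two coordinates symmetrically, with no need for the paper's separate independence argument on the $x$-side. What it gives up is the two-sided asymptotics $\asymp y^{-\kappa}$ that Kesten's theorem supplies (only upper tails are needed for this theorem, so nothing is lost here), and the affine-random-walk picture that the paper reuses elsewhere (e.g.\ in Proposition \ref{pro:finlim}). Two small points of hygiene: the moment bounds for exponential functionals are really due to Dufresne/Yor --- the paper's reference \cite{BCFY} is a closer citation than \cite{RY} --- and you should state explicitly that the conditional Gaussianity of $\zeta_t$ rests on the independence of $W^{(2)}$ from $W$, which holds by the construction \eqref{eq:stochint}.
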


\begin{proof}
Once more, it is sufficient to consider only the case $\ab > 0$.

For each $t \ge 0$, the point $(0,Y_{\infty}, Z_t)$ lies on the
geodesic $\gab^{\infty} = \gab^{Y_{\infty}}$.
We shall show that for integer $n$,
\begin{equation}\label{eq:integ}
\limsup_{n \to \infty} \frac{1}{\log n}\, 
\dist\bigl( \Zf_n\,,\, (0,Y_{\infty}, Z_n)\bigr) \le 
\frac{2}{\ab}\quad\text{almost surely,}
\end{equation}
Together with Lemma \ref{lem:distincrement}, this will yield the result.

The metric $\dist_{\Sol}$ is invariant under the left action of
the group $\Sol(\pp,\qq)$. Using the product formula \eqref{eq:product} and
subsequently Proposition \ref{pro:metric}(iii), we find
$$
\begin{aligned}
\dist_{\Sol}\bigl(\Zf_t\,,(0,Y_\infty,Z_t)\bigr)
&=\dist_{\Sol}\bigl((e^{-\pp Z_t} X_t\,,
e^{\qq Z_t}(Y_t-Y_{\infty}),0),\of\bigr)\\
&\le
c  + \frac{2}{\pp}\log\bigl(1+e^{-\pp Z_t}|X_t|\bigr) 
+ \frac{2}{\qq}\log\bigl(1+e^{\qq Z_t}|Y_t-Y_{\infty}|\bigr) \,.
\end{aligned}
$$
We have
$$   
\begin{aligned}
e^{\qq Z_t}(Y_{\infty}-Y_t)
&=e^{\qq (W_t+at)}\int_t^{\infty} e^{-\qq (W_s+\ab s)}\, dW^{(2)}_s
=\int_t^{\infty} e^{-\qq ((W_s-W_t)+\ab (s- t))}\, dW^{(2)}_s\\
&\hspace{-.28cm}\stackrel{\mathrm{in\ law}}{=}
\int_0^\infty e^{-\qq (W_{s}+\ab s)}\, dW^{(2)}_{s} = Y_{\infty}\,.
\end{aligned}
$$  
Recall from the proof of Proposition \ref{pro:finlim} that $(Y_n,-Z_n)$ can be
interpreted as the right random walk 
$\begin{pmatrix} e^{-\qq Z_n} & Y_n \\ 0 & 1\end{pmatrix}$ 
on the affine group. 
We can apply a theorem of {\sc Kesten}~\cite[Thm. B]{Kes} to the sequence
$(Y_n)$ and its limit $Y_{\infty}\,$. Namely, if we set 
$\kappa(\qq) = 2\ab/\qq$ then 
$\Ex\bigl((e^{-\qq Z_1})^{\kappa(\qq)}\bigr) =1$, whence
$$
\Prob[|Y_{\infty}| > y] \asymp y^{-\kappa(\qq)}\quad\text{as }\;y \to \infty\,.
$$ 
Now take $\delta>1/\kappa$. Then
$$
\begin{aligned}
\sum_{n=2}^{\infty}
\Prob\Bigl[\log\bigl(1+e^{\qq Z_n}|Y_n-Y_{\infty}|\bigr)&> \delta \log n\Bigr]
= \sum_{n=2}^\infty\Prob\Bigl[\log\bigl(1+|Y_{\infty}|\bigr)> \delta \log n\Bigr]\\
&= \sum_{n=2}^\infty \Prob\bigl[|Y_{\infty}|>n^\delta -1]  
\asymp \sum_{n=2}^\infty (n^\delta-1)^{-\kappa(\qq)}<+\infty\,.
\end{aligned}
$$
Thus by the Borel--Cantelli Lemma
$$
\limsup_{n\to\infty}\frac{1}{\log n} 
\log\bigl(1+e^{\qq Z_n}|Y_n-Y_\infty|\bigr)\le \frac{\qq}{2\ab}\quad\text{almost surely.}
$$
We now consider the first coordinate.  For fixed $t$ observe that
$$
\begin{aligned}
e^{-\pp Z_t} X_t&=e^{-\pp (W_t+\ab t)}\int_0^t e^{\pp (W_s+\ab s)}\, dW^{(1)}_s\\
&=\int_0^t e^{-\pp ((W_t-W_s)+\ab (t- s))}\, dW^{(1)}_s\\
\hspace{-.28cm}&\stackrel{\mathrm{in\ law}}{=}
\int_0^t e^{-\pp (W_s +\ab s)}\, dW^{(1)}_s=: \wt X_t\,,
\end{aligned}
$$
and $\wt X_{\infty}= \lim_{t \to \infty} \wt X_t$ exists almost surely.
As above, one finds that 
$$
e^{\pp Z_t}(\wt X_{\infty} - \wt X_t)  
=\int_t^{\infty} e^{-\pp ((W_s-W_t)+\ab (s- t))}\, dW^{(1)}_s =:
\overline {\!X}_{t,\infty}\,,
$$ 
where $\overline {\!X}_{t,\infty}$ is independent from $(\Zf_s)_{0 \le s \le t}$
and has the same law as $\wt X_{\infty}$. Thus, for some constant $C > 0$ and
for any $x > 0$, 
$$
\begin{aligned}
\Prob\bigl[e^{-\pp Z_t} |X_t|>x\bigr] 
&= \Prob\bigl[|\wt X_n |>x\,,\; |\wt X_{\infty}-\wt X_n| \le \tfrac{x}{2}\bigr]
+ \Prob\bigl[|\wt X_n |>x\,,\; |\wt X_{\infty}-\wt X_n| > \tfrac{x}{2}\bigr]\\
&\le \Prob\bigl[|\wt X_{\infty}|>\tfrac{x}{2}\bigr]
  + \Prob\bigl[ e^{-\pp Z_n}|\,\overline{\!X}_{n,\infty}|> \tfrac{x}{2}\bigr]\\
&\le C \,\bigl(\tfrac{x}{2}\bigr)^{\kappa(\pp)} 
+ \Ex\Bigl( \Prob\bigl[ |\,\overline{\!X}_{n,\infty}|> \tfrac{x}{2}e^{\pp Z_n}
\,\big|\, Z_n\bigr]\Bigr)\\
&\le C \,\bigl(\tfrac{x}{2}\bigr)^{-\kappa(\pp)} 
+ C\,\Ex\Bigl( \bigl(\tfrac{x}{2}e^{\pp Z_n}\bigr)^{-\kappa(\pp)}\Bigr)\\
&\le C\, \bigl(\tfrac{x}{2}\bigr)^{-\kappa(\pp)} 
+ C\bigl(\tfrac{x}{2}\bigr)^{-\kappa(\pp)} 
\bigl.\underbrace{\Ex\bigl(e^{-\kappa(\pp)\pp Z_1}\bigr)}_{\displaystyle = 1}
\bigr.^{n}
= 2C \bigl(\tfrac{x}{2}\bigr)^{-\kappa(\pp)}
\end{aligned}
$$
Proceeding as above, the Borel--Cantelli Lemma implies that
$$
\limsup_{n\to\infty}\frac{1}{\log n} \log\bigl(1+ e^{-\pp Z_n} |X_n|\bigr)
\le \frac{\pp}{2\ab}
$$
almost surely.
\end{proof}

\section{Elements of potential theory}\label{sect:potential}

If $\Lap$ is any of our different Laplacians on $\Sol$, $\Hb$, or $\R$,
and $\la \in \R$, then we denote by $\Hc(\Lap, \la)$ the space of all
functions $h$ on our space which satisfy $\Lap h = \la\cdot h$. The positive
cone $\Hc^+(\Lap,\la)$ contains non-zero functions if and only if
$\la \ge \la_{\min}(\Lap)$, the \emph{bottom of the positive spectrum.}
Below we shall clarify what the values of $\la_{\min}$ are in each of
our cases. In any case, $\la_{\min} \le 0$, since the space of
\emph{harmonic functions} $\Hc(\Lap) = \Hc(\Lap, 0)$ contains all
constant functions.

By the \emph{minimum principle,} every non-zero function in $\Hc^+(\Lap,\la)$
must be strictly positive in each point.

A function $h$ in $\Hc^+(\Lap,\la)$ is called \emph{minimal} if $h(0)=1$
and whenever $h \ge f \in \Hc^+(\Lap,\la)$ then $f/h$ is constant.
A basic fact in classical potential theory of Riemannian manifolds
says that every function in $\Hc^+(\Lap,\la)$ can be expressed uniquely
as an integral over the minimal harmonic functions with respect to a
finite Borel measure on the latter set. 

We shall specify this in more detail
in our cases below.

Let us now recall what happens in the case of the standard Laplacian
$$
\Lap^{\Hb}_{-1/2} = \frac12\Bigl(e^{2 z}\,\frac{\bd^2}{\bd x^2} +
 \frac{\bd^2}{\bd z^2}  - \frac{\bd}{\bd z}\Bigr)
$$
on standard hyperbolic plane $\Hb = \Hb(1) = \{ x + \im e^z : x,z \in \R\}$ in
the logarithmic model. 

The minimal harmonic functions are the \emph{Poisson kernels,} which are 
parametrised by the (hyperbolic) boundary 
$\vartheta \Hb = \R \cup \{ \varpi \}$. (Recall that $\varpi=\varpi_1$ is the
point at infinity.) In the logarithmic model the kernels are
\begin{equation}\label{eq:Poisson1}
P\bigl( (x,z), \varpi \bigr) = e^z \AND
P\bigl( (x,z), \xi \bigr) = \frac{(\xi^2+1)e^z}{(\xi-x)^2 + e^{2z}}\,,\quad
\xi \in \R\,.
\end{equation}
We have $\la_{\min}(\Lap^{\Hb}_{-1/2}) = -1/8$, and the minimal elements in
$\Hc^+(\Lap^{\Hb}_{-1/2},\la)$ are the functions
\begin{equation}\label{eq:Poisson2}
P(\cdot,\xi)^{\al(\la)}\,,\quad\text{where}\quad \xi \in \vartheta\Hb
\AND \al(\la)= \frac{1+\sqrt{1+8\la}}{2}\,.
\end{equation}

Next, let us turn our attention to $\Hc^+(\Lap^{\Hb(\pp)}_{\ab},\la)$.

\begin{lem}\label{lem:transfer}
A function $f$ on $\/\Hb(\pp)$ is in
$\Hc\bigl(\Lap^{\Hb(\pp)}_{\ab},\la\bigr)$
if and only if the function on $\/\Hb(1)$ given by
$\bigl(e^{(\ab+\pp/2)z}f\bigr)\!\circ\theta$ is in
$\Hc\bigl(\Lap^{\Hb(1)}_{-1/2},\tfrac{8\la + 4\ab^2 -\pp^2}{8\pp^2}\bigr)\,$,
where $\theta(x,z) = (x/\pp, z/\pp)$.
In particular,
$$
\la_{\min}(\Lap^{\Hb(\pp)}_{\ab}) = -\ab^2/2\,,
$$
and for $\la \ge -\ab^2/2$, the minimal elements in
$\Hc^+\bigl(\Lap^{\Hb(\pp)}_{\ab},\la\bigr)$ are the functions
$$
P_{\pp,\ab,\la}\bigl((x,z),\varpi_{\pp}\bigr) = e^{\alpha z}
\!\!\AND\!\! P_{\pp,\ab,\la}\bigl((x,z),\xi\bigr) = e^{\alpha z}
\left(\frac{(\xi^2+1)}{(\xi-\pp\,x)^2 + e^{2\pp z}}
\right)^{\!\beta}\!\!\!, \quad \xi \in \vartheta^*\Hb(\pp) \equiv\R\,,
$$
where $\alpha = \alpha(\la,\ab)= \sqrt{\ab^2+2\la}-\ab$ and 
$\beta = \beta(\la, \ab,\pp) = \frac{1}{2} + \frac{\sqrt{\ab^2+2\la}}{\pp}\,$.
\end{lem}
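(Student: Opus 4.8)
The plan is to realise the correspondence $f\mapsto g:=\bigl(e^{(\ab+\pp/2)z}f\bigr)\circ\theta$ as a composition of two elementary conjugations: first a rescaling (pullback by $\theta$), which turns curvature $-\pp^2$ into $-1$, and then multiplication by the weight $e^{\gamma z}$ with $\gamma=\ab/\pp+1/2$, a ground-state (Doob) transform that removes the drift. Writing $\tilde f=f\circ\theta$ on $\Hb(1)$, the chain rule applied to $\theta(x,z)=(x/\pp,z/\pp)$ gives
\[
\bigl(\Lap^{\Hb(\pp)}_{\ab}f\bigr)\circ\theta=\pp^2\,\Lap^{\Hb(1)}_{\ab/\pp}\tilde f,
\]
so the pullback conjugates $\Lap^{\Hb(\pp)}_{\ab}$ into $\pp^2\,\Lap^{\Hb(1)}_{\ab/\pp}$ and turns the eigenvalue $\la$ into $\la/\pp^2$.

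Next I would perform the exponential conjugation on $\Hb(1)$. Expanding $\Lap^{\Hb(1)}_{\ab/\pp}(e^{-\gamma z}g)$ by the product rule, the coefficient of the first-order term $\frac{\bd g}{\bd z}$ is $\ab/\pp-\gamma$; demanding that it equal $-1/2$, the drift of $\Lap^{\Hb(1)}_{-1/2}$, forces precisely $\gamma=\ab/\pp+1/2$, which explains the weight in the statement. The zeroth-order term created is the constant $\tfrac18-\tfrac{\ab^2}{2\pp^2}$, so that
\[
\Lap^{\Hb(1)}_{\ab/\pp}\tilde f=e^{-\gamma z}\Bigl(\Lap^{\Hb(1)}_{-1/2}g+\bigl(\tfrac18-\tfrac{\ab^2}{2\pp^2}\bigr)g\Bigr).
\]
Combining the two displays, $\Lap^{\Hb(\pp)}_{\ab}f=\la f$ is equivalent to $\Lap^{\Hb(1)}_{-1/2}g=\mu\,g$ with $\mu=\la/\pp^2-\tfrac18+\tfrac{\ab^2}{2\pp^2}=\tfrac{8\la+4\ab^2-\pp^2}{8\pp^2}$, as claimed; since $\theta$ is a diffeomorphism and $e^{\gamma z}$ never vanishes, every step is reversible and the ``if and only if'' follows.

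From here the ``in particular'' assertions follow by transport. Because $f\mapsto g$ is a linear bijection carrying positive functions to positive functions, $\Hc^+(\Lap^{\Hb(\pp)}_{\ab},\la)$ is nontrivial exactly when $\Hc^+(\Lap^{\Hb(1)}_{-1/2},\mu)$ is; using $\la_{\min}(\Lap^{\Hb(1)}_{-1/2})=-1/8$ and solving $\mu\ge-1/8$ yields $\la\ge-\ab^2/2$, so $\la_{\min}(\Lap^{\Hb(\pp)}_{\ab})=-\ab^2/2$. For minimality I would observe that the ratio of two transformed functions satisfies $g_1/g_2=(f_1/f_2)\circ\theta$ (the weight $e^{\gamma z}$ cancels), so the transform is an order-isomorphism of the two positive cones preserving the relation ``$f/h$ is constant''; hence it maps minimal elements bijectively to minimal elements, and the normalisation at $\of$ is preserved since $\theta$ fixes the origin and $e^{\gamma\cdot0}=1$. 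As \eqref{eq:Poisson2} lists \emph{all} minimal elements on $\Hb(1)$, transporting them back yields \emph{all} minimal elements on $\Hb(\pp)$.

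The one point that must be checked carefully — and the only place where the precise constants matter — is that the Poisson exponent $\tfrac{1+\sqrt{1+8\mu}}{2}$ of \eqref{eq:Poisson2} equals the claimed $\beta=\tfrac12+\tfrac{\sqrt{\ab^2+2\la}}{\pp}$. Substituting the value of $\mu$ gives $1+8\mu=4(\ab^2+2\la)/\pp^2$, whence indeed $\tfrac{1+\sqrt{1+8\mu}}{2}=\beta$. With this identity in hand, applying the inverse transform $g\mapsto e^{-(\ab+\pp/2)z}g(\pp x,\pp z)$ to $P(\cdot,\varpi)^{\beta}=e^{\beta z}$ and to $P(\cdot,\xi)^{\beta}$ produces exactly $e^{\alpha z}$ and $e^{\alpha z}\bigl((\xi^2+1)/((\xi-\pp x)^2+e^{2\pp z})\bigr)^{\beta}$, the surviving height-exponent collapsing to $\beta\pp-\ab-\pp/2=\sqrt{\ab^2+2\la}-\ab=\alpha$. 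I expect the only real (though routine) obstacle to be this bookkeeping of the two exponents $\alpha,\beta$ together with the eigenvalue shift $\mu$; all the conceptual content is already contained in the two conjugation identities above.
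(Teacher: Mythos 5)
Your proof is correct and follows essentially the same route as the paper: the same scaling conjugation $\bigl(\Lap^{\Hb(\pp)}_{\ab}f\bigr)\circ\theta=\pp^2\,\Lap^{\Hb(1)}_{\ab/\pp}(f\circ\theta)$, the same exponential (ground-state) conjugation with weight $e^{(\ab/\pp+1/2)z}$ producing the eigenvalue shift, and then transport of \eqref{eq:Poisson1}--\eqref{eq:Poisson2} back to $\Hb(\pp)$. The only difference is that you spell out the details the paper compresses into ``combining these computations, the statements follow,'' namely the order-isomorphism of the positive cones and the exponent bookkeeping $1+8\mu=4(\ab^2+2\la)/\pp^2$, both of which you carry out correctly.
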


\begin{proof} First of all, it is a straightforward computation that
$$
\bigl(\Lap^{\Hb(\pp)}_{\ab}f\bigr) \! \circ \theta =
\pp^2 \, \Lap^{\Hb(1)}_{\ab/\pp}(f \! \circ \theta)\,.
$$
Therefore $f$ is in $\Hc(\Lap^{\Hb(\pp)}_{\ab},\la)$ if  and only if
$\bar f = f \!\circ \theta$ is in $\Hc(\Lap^{\Hb(1)}_{\ab/\pp},\la/\pp^2)\,$.

For the moment, set $\bar \ab = \ab/\pp$ and
$\bar\la = \la/\pp^2$. Then we compute
$$
\Lap^{\Hb(1)}_{-1/2}\bigl( e^{(\bar\ab+1/2)z}\bar f \bigr)
= e^{(\bar\ab+1/2)z} \Bigl(\Lap^{\Hb(1)}_{\bar\ab}\bar f
		       + \tfrac{4\bar\ab^2-1}{8}\bar f\Bigr)\,.
$$
Therefore $\bar f$ is in $\Hc(\Lap^{\Hb(1)}_{\bar\ab},\bar\la)$ if and only if
$e^{(\bar\ab+1/2)z}\bar f$ is in
$\Hc(\Lap^{\Hb(1)}_{-1/2},\bar\la+\tfrac{4\bar\ab^2-1}{8})$.

Combining these computations, the statements follow.
\end{proof}

Thus, every function $h \in \Hc^+(\Lap^{\Hb(\pp)}_{\ab},\la)$ has a unique
integral representation
\begin{equation}\label{eq:Poisson3}
h(x,z) = \int_{\vartheta\Hb}
P_{\pp,\ab,\la}\bigl( (x,z), \xi \bigr)\, d\nu(\xi)\,,
\end{equation}
where $\nu$ is a (finite, positive) Borel measure on $\vartheta\Hb\,$. (This
includes $\xi = \varpi_{\pp}\,$.)

\begin{rmk}\label{rmk:Liouville}
When $\la =0$, that is, when we consider ordinary harmonic functions,
we see that the constant function $\uno$ is minimal harmonic if and only if 
$\ab \ge 0$. This can be stated also by saying that 
$\Lap^{\Hb(\pp)}_{\ab}$ has the (weak) Liouville property, i.e., all bounded
harmonic functions are constant, precisely when $\ab \ge 0$.
\end{rmk}


Everything that we have said so far in this section is very well known; see
e.g. {\sc Helgason} \cite{He}, or many other sources.

\smallskip

Let us now turn our attention to $\Sol$. The following is immediate.

\begin{lem}\label{lem:lift-minimal}
If the function $h_1$ on $\Hb(\pp)$ is such that $h = h_1 \circ \pi_1$
is minimal in $\Hc^+\bigl(\Lap^{\Sol(\pp,\qq)}_{\ab},\la\bigr)$, then
$h_1$ is also minimal in $\Hc^+\bigl(\Lap^{\Hb(\pp)}_{\ab},\la\bigr)$.

In the same way, if the function $h_2$ on $\Hb(\qq)$ is such that
$h = h_2 \circ \pi_2$ is minimal in
$\Hc^+\bigl(\Lap^{\Sol(\pp,\qq)}_{\ab},\la\bigr)$, then
$h_2$ is also minimal in $\Hc^+\bigl(\Lap^{\Hb(\qq)}_{-\ab},\la\bigr)$.
\end{lem}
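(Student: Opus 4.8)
The plan is to prove the contrapositive-style structural statement by exploiting how minimality interacts with the projection $\pi_1$. First I would recall the definition: $h = h_1 \circ \pi_1$ is minimal in $\Hc^+(\Lap_{\ab}^{\Sol},\la)$ means $h(\of)=1$ and whenever $0 \le g \le h$ with $g \in \Hc^+(\Lap_{\ab}^{\Sol},\la)$, the ratio $g/h$ is constant. I want to deduce the same property for $h_1$ on $\Hb(\pp)$. So suppose $f_1 \in \Hc^+(\Lap_{\ab}^{\Hb(\pp)},\la)$ with $0 \le f_1 \le h_1$ pointwise on $\Hb(\pp)$. The key observation is that by the projection property stated after \eqref{eq:lap1-hyp}, namely $\Lap_{\ab}(f_1 \circ \pi_1) = (\Lap_{\ab}^{\Hb(\pp)} f_1) \circ \pi_1$, the lifted function $f = f_1 \circ \pi_1$ lies in $\Hc^+(\Lap_{\ab}^{\Sol},\la)$ with the same eigenvalue $\la$.

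Next I would check the domination survives the lift: since $f_1 \le h_1$ on $\Hb(\pp)$ and $\pi_1$ is surjective, we get $f = f_1 \circ \pi_1 \le h_1 \circ \pi_1 = h$ pointwise on $\Sol(\pp,\qq)$, and likewise $f \ge 0$. Minimality of $h$ then forces $f/h$ to be a constant $c$ on $\Sol$. Because both $f$ and $h$ factor through $\pi_1$ and $\pi_1$ is onto, this pushes down to $f_1 = c\, h_1$ on $\Hb(\pp)$, i.e. $f_1/h_1$ is constant. This is exactly the minimality condition for $h_1$, except for the normalization $h_1(0)=1$, which follows immediately since $h_1(0,0) = h_1(\pi_1(\of)) = h(\of) = 1$. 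The argument for $h_2$ on $\Hb(\qq)$ is identical, using instead the projection $\pi_2$ and the companion identity for $\Lap_{\ab}^{\Hb(\qq)}$ displayed in \eqref{eq:lap2-hyp}, where the drift sign flips to $-\ab$ because $\pi_2$ reverses the sign of $z$.

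The only genuinely delicate point—and the one I would state carefully rather than wave at—is the passage from ``$f/h$ constant on $\Sol$'' to ``$f_1/h_1$ constant on $\Hb(\pp)$.'' This is clean precisely because $\pi_1$ is surjective onto $\Hb(\pp)$: every point $(x,z) \in \Hb(\pp)$ is hit by some $(x,y,z) \in \Sol$, and there the constant ratio relation reads $f_1(x,z) = c\, h_1(x,z)$. I would also note in passing that $h_1$ must be strictly positive everywhere (hence the division makes sense) by the minimum principle recalled earlier in \S\ref{sect:potential}. No step requires computation with the explicit Poisson kernels; the entire content is the functoriality of the eigenfunction equation under $\pi_1$ and $\pi_2$ together with the order-preserving nature of composition with a surjection. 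This is why the authors can fairly call the lemma ``immediate.''
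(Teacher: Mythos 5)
Your proof is correct and is precisely the argument the paper has in mind when it declares the lemma ``immediate'': lift the dominated eigenfunction $f_1$ via the projection identity $\Lap_{\ab}(f_1\circ\pi_1) = (\Lap^{\Hb(\pp)}_{\ab}f_1)\circ\pi_1$, invoke minimality of $h$ upstairs, and push the constant ratio back down using surjectivity of $\pi_1$ (with the sign flip $\ab \mapsto -\ab$ handled by \eqref{eq:lap2-hyp} for the $\pi_2$ case). Nothing is missing, and your care about strict positivity and the normalization $h_1(0,0)=h(\of)=1$ is exactly the right level of detail.
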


We now need a part of the Martin boundary theory for elliptic operators
on manifolds. The reader is referred to {\sc Ancona}~\cite{An} and
{\sc Taylor}~\cite{Ta} for the necessary backgroud material.
See also \cite[Chapter VI]{GJT}.
In the following propositions, we subsume the necessary material without all proofs.

\begin{pro}\label{pro:heatkernel}
The Markov semigroup $\Hf_t = \Hf_t^{\ab} = \exp(t\Lap_{\ab})$, $t > 0$,
admits a symmetric, bounded kernel $\hb_t(\xf,\zf) = \hb_t^{\ab}(\xf,\zf)$
with respect to the measure $\ms_{\ab}$ of Lemma \ref{lem:reversible}(a),
such that
$$
\Hf_t f(\xf) = \int_{\Sol} \hb_t(\xf,\zf)f(\zf)\,d\ms_{\ab}(\zf)\,.
$$
For each $\zf = (x,y,z) \in \Sol$, the function $\hb_t(\cdot,\zf)$ is
in $C^2(\Sol)$.
Furthermore, its kernel with respect to the volume element $d\zf$
of the $\Sol$-manifold,
$$
\pb_t(\xf,\zf) = \hb_t(\xf,\zf)\,e^{(\ab + \pp - \qq)z}\,,
$$
is stochastic and invariant under the action of the group $\Sol(\pp,\qq)$.
\end{pro}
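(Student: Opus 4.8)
The plan is to establish each of the asserted properties of the semigroup $\Hf_t^{\ab}=\exp(t\Lap_{\ab})$ by transporting them through the known reversibility and invariance structure already set up in the excerpt. First I would observe that Lemma \ref{lem:reversible}(a) tells us $\Lap_{\ab}$ is self-adjoint with respect to $\ms_{\ab}(d\zf)=e^{2\ab z}\,dx\,dy\,dz$, so that $\Lap_{\ab}$ generates a self-adjoint, positivity-preserving Markov semigroup on $L^2(\Sol,\ms_{\ab})$ by the spectral theorem. The existence of a jointly measurable, symmetric heat kernel $\hb_t(\xf,\zf)$ representing $\Hf_t$ against $\ms_{\ab}$ is then standard elliptic/parabolic theory: since $\Lap_{\ab}$ is a uniformly (on compacta) elliptic second-order operator with smooth coefficients, the parabolic operator $\bd_t-\Lap_{\ab}$ is hypoelliptic, and the general existence theorem for heat kernels of such operators on manifolds (see {\sc Ancona}~\cite{An}, {\sc Taylor}~\cite{Ta}) applies. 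Symmetry $\hb_t(\xf,\zf)=\hb_t(\zf,\xf)$ is exactly the self-adjointness from Lemma \ref{lem:reversible}(a); boundedness of the kernel follows from local parabolic Harnack/Nash-type estimates together with the reversibility, and smoothness $\hb_t(\cdot,\zf)\in C^2$ (indeed $C^\infty$) is immediate from hypoellipticity.

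Second I would turn to the stochastic, group-invariant kernel $\pb_t$. The point is that the invariance must be read off from Lemma \ref{lem:lap-inv}: since $\Lap_{\ab}(\tau_{\gf_0}f)=\tau_{\gf_0}(\Lap_{\ab}f)$ for every $\gf_0\in\Sol(\pp,\qq)$, the semigroup commutes with the left-translation action, $\Hf_t\circ\tau_{\gf_0}=\tau_{\gf_0}\circ\Hf_t$. Expressing $\Hf_t$ through $\hb_t$ and unwinding this commutation relation, one finds a quasi-invariance of $\hb_t$ twisted by the Jacobian of the $\ms_{\ab}$-measure under translations; converting from $\ms_{\ab}$ to the Riemannian volume $d\zf=e^{(\qq-\pp)z}\,dx\,dy\,dz$ of Lemma \ref{lem:HaarSol} is precisely what removes the twist. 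Concretely, I would define $\pb_t(\xf,\zf)=\hb_t(\xf,\zf)\,e^{(\ab+\pp-\qq)z}$ so that integration against $d\zf$ reproduces integration of $\hb_t$ against $\ms_{\ab}$, and then verify that in these coordinates the translation-commutation of $\Hf_t$ becomes the clean left-invariance $\pb_t(\gf_0\xf,\gf_0\zf)=\pb_t(\xf,\zf)$. Here one must track the exponents carefully: the factor $e^{(\ab+\pp-\qq)z}$ is chosen exactly to balance the modular/Jacobian discrepancy between $\ms_{\ab}$ and the left Haar measure.

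Third, for stochasticity (conservativeness) $\int_{\Sol}\pb_t(\xf,\zf)\,d\zf=1$, I would argue that $\Lap_{\ab}\uno=0$, so formally $\Hf_t\uno=\uno$; the content is to rule out loss of mass at infinity. Since $\pb_t$ is the transition kernel of the diffusion $\Zf_t$ of \eqref{eq:stochint}, which is a genuine (non-explosive) process for all finite times — its components are given by convergent stochastic integrals with locally bounded coefficients and the vertical coordinate $Z_t=\ab t+W_t$ is globally defined — conservativeness is equivalent to non-explosion, and the explicit representation \eqref{eq:stochint} gives this directly. Alternatively, one may invoke a standard stochastic-completeness criterion (e.g. volume growth bounds) on the $\Sol$-manifold.

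I expect the main obstacle to be the bookkeeping in the second step: getting the exponent in $\pb_t(\xf,\zf)=\hb_t(\xf,\zf)\,e^{(\ab+\pp-\qq)z}$ to come out so that left-invariance holds exactly rather than up to a spurious factor. The interplay between the reference measure $\ms_{\ab}$, the Riemannian volume, and the nonunimodularity of $\Sol(\pp,\qq)$ (the modular function $\Delta(\gf)=e^{(\qq-\pp)\hor(\gf)}$ of Lemma \ref{lem:HaarSol}) means the translation behaviour of $\hb_t$ carries a cocycle in $z$, and one has to check that the chosen density exactly cancels it. Everything else — existence, symmetry, smoothness, boundedness, stochasticity — is either standard elliptic theory or a direct consequence of the reversibility and invariance lemmas already proved, so the genuine verification is this exponent-matching computation.
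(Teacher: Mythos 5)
The paper contains no proof of this proposition: Section \ref{sect:potential} states explicitly that these propositions ``subsume the necessary material without all proofs,'' deferring to Ancona \cite{An}, Taylor \cite{Ta} and \cite[Ch.~VI]{GJT}. Your route --- self-adjointness on $L^2(\ms_{\ab})$ plus local uniform ellipticity/hypoellipticity for existence, symmetry and smoothness of $\hb_t$; Lemma \ref{lem:lap-inv} for the invariance; non-explosion of \eqref{eq:stochint} for stochasticity --- is exactly the standard argument the paper has in mind, so in spirit you are doing what the paper does.

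However, the one computation you postpone, and yourself single out as the crux, fails as you state it. You define $\pb_t(\xf,\zf)=\hb_t(\xf,\zf)\,e^{(\ab+\pp-\qq)z}$ and assert that this is ``so that integration against $d\zf$ reproduces integration of $\hb_t$ against $\ms_{\ab}$.'' It is not: Lemma \ref{lem:reversible}(a) says $\ms_{\ab}(d\zf)=e^{(2\ab+\pp-\qq)z}\,d\zf$, so the kernel of $\Hf_t$ with respect to the volume element is $\hb_t(\xf,\zf)\,e^{(2\ab+\pp-\qq)\hor(\zf)}$; your factor is off by $e^{\ab z}$. The discrepancy is not cosmetic: with the exponent $\ab+\pp-\qq$ one gets $\int_{\Sol}\pb_t(\xf,\zf)\,d\zf=\int_{\Sol}\hb_t(\xf,\zf)\,e^{-\ab\hor(\zf)}\,\ms_{\ab}(d\zf)=\Ex_{\xf}\bigl[e^{-\ab Z_t}\bigr]=e^{-\ab\hor(\xf)-\ab^2t/2}$, which is not $\equiv 1$, and under translation by $\gf_0$ this kernel picks up the spurious factor $e^{-\ab\,\hor(\gf_0)}$; so both stochasticity and invariance fail whenever $\ab\ne 0$. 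The exponent in the proposition is evidently a misprint for $2\ab+\pp-\qq$, and carrying out the exponent-matching you deferred would have revealed and corrected it. With that correction your argument does close: $\int_{\Sol}\pb_t(\xf,\zf)\,d\zf=\Hf_t\uno(\xf)=1$ by conservativeness (clear from \eqref{eq:stochint}), and invariance holds because the transition density of a left-invariant diffusion taken with respect to left Haar measure --- which by Lemma \ref{lem:HaarSol} is precisely $d\zf$ --- is invariant in both arguments. A smaller caution of the same kind: global boundedness of $\hb_t$ cannot follow from local Nash/Harnack bounds, since already on $\R$ the symmetric kernel satisfies $\hb_t(z,z)=(2\pi t)^{-1/2}e^{-2\ab z-\ab^2 t/2}$, unbounded on the diagonal; the stated ``bounded'' must be read locally (or for the invariant kernel), not as a supremum over $\Sol\times\Sol$.
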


\begin{pro}\label{pro:greenkernel}
The associated Green kernel
$$
\gb_{\ab}(\xf,\zf|\la) = \gb(\xf,\zf|\la)
=\int_0^\infty e^{-\la t}\pb_t(\xf,\zf)\,dt \quad
(\xf, \zf \in \Sol\,,\;\xf \ne\zf)
$$
is strictly positive and finite for each $\la \ge \la_{\min}(\Lap_{\ab})$.
\end{pro}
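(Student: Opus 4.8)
The plan is to read off both properties from the heat kernel $\pb_t$ of Proposition~\ref{pro:heatkernel}, splitting the defining integral into the ranges $0<t\le 1$ and $t\ge 1$. Strict positivity is the easy half: $\Lap_{\ab}$ is a second order elliptic operator with smooth coefficients on the connected manifold $\Sol(\pp,\qq)$, so the parabolic minimum principle forces $\pb_t(\xf,\zf)>0$ for every $t>0$ and all $\xf,\zf$; once convergence of the integral is established, $\gb(\xf,\zf\mid\la)$ is the integral of a strictly positive integrand and hence strictly positive. It thus remains to prove finiteness for $\xf\neq\zf$ and $\la\ge\la_{\min}(\Lap_{\ab})$; note that $h(x,y,z)=e^{-\ab z}$ satisfies $\Lap_{\ab}h=-\tfrac{\ab^2}{2}h$, so $\la_{\min}(\Lap_{\ab})=-\ab^2/2$, and it is exactly the critical case $\la=\la_{\min}$ that will be delicate.

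For the short-time range I would invoke the standard off-diagonal Gaussian upper bound for a uniformly elliptic operator, valid on any fixed compact region containing $\xf$ and $\zf$,
\[
\pb_t(\xf,\zf)\ \le\ C\,t^{-3/2}\exp\!\bigl(-\dist_{\Sol}(\xf,\zf)^2/(Ct)\bigr),\qquad 0<t\le 1 .
\]
Since $\xf\neq\zf$, the exponential factor overwhelms the $t^{-3/2}$ singularity and $\int_0^1 e^{-\la t}\pb_t(\xf,\zf)\,dt<\infty$. This is the only place the hypothesis $\xf\neq\zf$ enters; on the diagonal the short-time integral genuinely diverges, as it must for a three-dimensional diffusion.

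For the long-time range I reduce to the on-diagonal kernel at the origin. By the $\Sol$-invariance of $\pb_t$ one has $\pb_t(\xf,\zf)=\pb_t(\of,\xf^{-1}\zf)$ and $\pb_t(\gf,\gf)=\pb_t(\of,\of)$ for all $\gf$. Writing $\pb_t=\hb_t\cdot w$ with the positive, $t$-independent weight $w(\zf)=e^{(\ab+\pp-\qq)\hor(\zf)}$ of Proposition~\ref{pro:heatkernel}, the symmetry of $\hb_t$ together with Cauchy--Schwarz (using $\hb_t(\gf,\gf)=\|\hb_{t/2}(\gf,\cdot)\|_{\ms_{\ab}}^2$ and the semigroup identity) gives $\hb_t(\of,\gf)\le\sqrt{\hb_t(\of,\of)\,\hb_t(\gf,\gf)}$, whence $\pb_t(\of,\gf)\le\sqrt{w(\gf)}\;\pb_t(\of,\of)$. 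Everything therefore rests on the sharp on-diagonal decay
\[
\pb_t(\of,\of)\ \le\ C\,t^{-3/2}\,e^{-\ab^2 t/2}\qquad(t\ge 1),
\]
which, since $-\ab^2/2=\la_{\min}$, makes $e^{-\la t}\pb_t(\of,\of)\le C\,t^{-3/2}$ integrable on $[1,\infty)$ for every $\la\ge\la_{\min}$.

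Proving this on-diagonal bound is the main obstacle, and it is where the geometry is used. From the representation \eqref{eq:stochint} I condition on the vertical motion $Z_s=\ab s+W_s$: the density of $Z_t$ at $0$ equals $\tfrac{1}{\sqrt{2\pi t}}e^{-\ab^2 t/2}$, and given the path the coordinates $X_t,Y_t$ are independent centred Gaussians with variances $V_t(\pp)$ and $V_t(-\qq)$, so that
\[
\pb_t(\of,\of)=\frac{1}{\sqrt{2\pi t}}\,e^{-\ab^2 t/2}\cdot\frac{1}{2\pi}\,
\Ex\Bigl[\bigl(V_t(\pp)\,V_t(-\qq)\bigr)^{-1/2}\Bigr],
\]
the expectation being over the Brownian bridge $(Z_s)_{0\le s\le t}$ from $0$ to $0$ (the conditioning absorbs the drift). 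Because $\pp,\qq>0$ we have $V_t(\pp)\ge|\{s\le t:Z_s>0\}|$ and $V_t(-\qq)\ge|\{s\le t:Z_s<0\}|$, and the occupation time of the positive half-line by a Brownian bridge is uniform on $[0,t]$; hence $\Ex[(V_t(\pp)V_t(-\qq))^{-1/2}]\le\tfrac1t\int_0^1 \frac{du}{\sqrt{u(1-u)}}=\pi/t$, which yields the claimed rate. I emphasise that \emph{both} lateral directions are needed: each contributes one factor $t^{-1/2}$, and dropping either one leaves only $t^{-1}$, so that a single hyperbolic plane is $\la_{\min}$-recurrent and it is precisely the horocyclic product structure that makes $\Sol(\pp,\qq)$ transient at the bottom of its spectrum.
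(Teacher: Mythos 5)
Your core argument is correct, and it takes a genuinely different route from the paper's. The paper treats this proposition as part of the general theory it cites (Ancona, Taylor): for $\la > \la_{\min}$ finiteness is standard resolvent theory, and the only point it actually argues is the critical case $\la=\la_{\min}$, which it handles by criticality theory -- if $\gb(\cdot,\cdot|\la_{\min})$ were infinite, the operator $\Lap_{\ab}-\la_{\min}$ would be \emph{critical}, and then the cone $\Hc^+(\Lap_{\ab},\la_{\min})$ would be a single half-line; but Lemma \ref{lem:transfer} together with \eqref{eq:Poisson1} and \eqref{eq:Poisson2} exhibits a continuum of distinct positive $\la_{\min}$-eigenfunctions, so the operator is subcritical and the critical Green kernel converges. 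You instead prove everything at once by a quantitative estimate: the reduction $\pb_t(\of,\gf)\le\sqrt{w(\gf)}\,\pb_t(\of,\of)$ via symmetry, semigroup property and Cauchy--Schwarz is sound, and your on-diagonal bound $\pb_t(\of,\of)\le C\,t^{-3/2}e^{-\ab^2 t/2}$, obtained by conditioning on the vertical bridge and invoking L\'evy's uniform law for the bridge's occupation time of a half-line, is a correct and rather elegant computation (the disintegration formula, the fact that conditioning on $Z_t=0$ kills the drift, and the bounds $V_t(\pp)\ge|\{s:Z_s>0\}|$, $V_t(-\qq)\ge|\{s:Z_s<0\}|$ all check out). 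Your route is self-contained and quantitative -- it produces exactly the integrable factor $t^{-3/2}$ that finiteness at the spectral bottom requires -- whereas the paper's route is shorter but rests entirely on cited general theory. One caveat: exhibiting the eigenfunction $e^{-\ab z}$ only yields $\la_{\min}\le -\ab^2/2$; the reverse inequality (no positive eigenfunctions below $-\ab^2/2$), hence the identity $\la_{\min}=-\ab^2/2$ that matches your range $[-\ab^2/2,\infty)$ with the claimed range $[\la_{\min},\infty)$, needs the conjugation $\Lap_0(e^{\ab z}f)=e^{\ab z}\bigl(\Lap_{\ab}f+\tfrac{\ab^2}{2}f\bigr)$ plus the amenability of $\Sol$; the paper takes this identification for granted, but as written your deduction is incomplete.

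Your closing remark, however, is wrong and should be deleted: a single hyperbolic plane is \emph{not} ``$\la_{\min}$-recurrent''. On $\Hb(\pp)$ with drift $\ab$ the true on-diagonal decay is $\asymp t^{-3/2}e^{-\ab^2 t/2}$ (classical for the hyperbolic plane, and obtained from the Laplace--Beltrami case by the same conjugation as above), so the Green kernel of $\Lap^{\Hb(\pp)}_{\ab}$ is finite at $\la=\la_{\min}$ as well; equivalently, the continuum of minimal $\la_{\min}$-eigenfunctions $P(\cdot,\xi)^{1/2}$ in \eqref{eq:Poisson2} already shows subcriticality of the hyperbolic plane by the paper's own criterion. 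What your computation actually shows is that the crude bound $e^{2\pp Z_s}\ge\mathbf{1}[Z_s>0]$ is lossy: with only one lateral coordinate it yields $t^{-1}$ instead of the true $t^{-3/2}$, and a non-integrable \emph{upper} bound proves nothing about divergence. (For the same reason your bound on $\Sol$ is far from sharp -- the true decay there has a stretched-exponential correction -- but it is an honest upper bound, which is all the proposition needs.) Since this remark is not load-bearing, the proof itself stands once it is removed.
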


We remark that finiteness at $\la = \la_{\min}(\Lap_{\ab})$ follows
from the fact that the cone of positive eigenfunctions
$\Hc^+\bigl(\Lap^{\Sol(\pp,\qq)}_{\ab},\la_{\min}\bigr)$ does not
collapse to a single half-line, as one can see from Lemma \ref{lem:transfer}
combined with \eqref{eq:Poisson1} and \eqref{eq:Poisson2}.

\begin{pro}\label{pro:harnack}
For each $d > 0$ and $\la \ge \la_{\min}$, the Green kernel satisfies
the Harnack inequality
$$
\frac{\gb(\xf,\zf'|\la)}{\gb(\xf,\zf|\la)} \le C_d(\la) \AND
\frac{\gb(\zf',\xf|\la)}{\gb(\zf,\xf|\la)} \le C_d(\la) \,,
$$
whenever $\dist(\zf,\zf') \le d$ and
$\min \{ \dist(\zf,\xf), \dist(\zf',\xf) \} \ge 10 (d+1)$, where
$C_d(\la) > 1$ is such that $C_d(\la) \to 1$ when $d \to 0$.

Furthermore, every function $h$ in
$\Hc^+\bigl(\Lap^{\Sol(\pp,\qq)}_{\ab},\la\bigr)$ satisfies
$$
\frac{h(\zf')}{h(\zf)} \le C_d(\la) \quad \text{for all}\quad \zf, \zf' \in
\Sol \;\text{ with }\;\dist(\zf,\zf') \le d.
$$
\end{pro}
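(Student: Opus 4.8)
Proposition \ref{pro:harnack} asserts a uniform Harnack inequality for the Green kernel $\gb(\cdot,\cdot|\la)$ and for every positive $\la$-eigenfunction of $\Lap_\ab$ on $\Sol(\pp,\qq)$, with a constant $C_d(\la)$ depending only on the metric distance $d$ and tending to $1$ as $d\to 0$.

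\medskip

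The plan is to reduce everything to the classical elliptic Harnack inequality on a Riemannian manifold and to exploit the transitive isometric action of $\Sol(\pp,\qq)$ on itself. First I would observe that $\Lap_\ab$ is, by \eqref{eq:lap-sol}, a second-order elliptic operator with smooth coefficients on the manifold $\Sol(\pp,\qq)$; after subtracting the eigenvalue (i.e. working with the operator $\Lap_\ab - \la$), any $h \in \Hc^+(\Lap_\ab^{\Sol(\pp,\qq)},\la)$ is a positive solution of $(\Lap_\ab-\la)h=0$, and likewise $\gb(\xf,\cdot|\la)$ and $\gb(\cdot,\xf|\la)$ are positive solutions away from the singularity $\xf$. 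The standard local Harnack inequality (Moser, or the form stated in Taylor~\cite{Ta} and Ancona~\cite{An}) then gives: for any geodesic ball $B$ and any compactly contained ball $B'\Subset B$, there is a constant depending on the coefficients of the operator on $B$ and on the geometry of $B'\subset B$ such that $\sup_{B'} u \le C\,\inf_{B'} u$ for every positive solution $u$ on $B$. The crucial point for a constant that is \emph{uniform in the base point} is the homogeneity: by Lemma \ref{lem:lap-inv}, $\Lap_\ab$ commutes with the left translations $\tau_{\gf_0}$, so if $u$ solves $(\Lap_\ab-\la)u=0$ near a point $\gf$, then $\tau_{\gf_0}u$ solves the same equation near $\gf_0^{-1}\gf$. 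Since the action is by isometries \eqref{eq:preservedist} and transitive, the local Harnack constant on a ball of a fixed radius is the \emph{same} at every point of $\Sol$; it depends only on the radius.

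\medskip

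Concretely, to get the second (function) assertion I would fix $\zf,\zf'$ with $\dist(\zf,\zf')\le d$, choose a geodesic ball $B$ of radius, say, $2d+1$ containing both, and apply the translated local Harnack inequality centered so as to reduce to a fixed neighbourhood of the origin $\of$; this yields $h(\zf')/h(\zf)\le C_d(\la)$ with $C_d(\la)$ the intrinsic local Harnack constant at scale $d$. The fact that $C_d(\la)\to 1$ as $d\to 0$ is the standard refinement of the elliptic Harnack inequality: on shrinking balls the oscillation of a positive solution, normalized by its value, tends to zero, because the operator is locally close to a constant-coefficient operator and the Harnack constant of such operators on a ball degenerates to $1$ as the radius shrinks. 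For the Green-kernel assertions I would apply the same argument to $\zf\mapsto\gb(\xf,\zf|\la)$ and $\zf\mapsto\gb(\zf,\xf|\la)$ (using symmetry of $\hb_t$ from Proposition \ref{pro:heatkernel}); the separation hypothesis $\min\{\dist(\zf,\xf),\dist(\zf',\xf)\}\ge 10(d+1)$ is exactly what guarantees that the ball $B$ on which Harnack is applied avoids the pole $\xf$, so that $\gb(\xf,\cdot|\la)$ is a genuine positive solution there, with no singularity interfering. The finiteness and positivity of $\gb$ for $\la\ge\la_{\min}$ is available from Proposition \ref{pro:greenkernel}.

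\medskip

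The main obstacle is making the local elliptic Harnack inequality genuinely uniform \emph{down to the bottom of the spectrum} $\la=\la_{\min}$, and verifying the delicate claim $C_d(\la)\to 1$ as $d\to 0$ with a single constant valid for all base points. The uniformity in the base point is handled cleanly by the isometric homogeneity argument above, so the real work is the quantitative behaviour of the Harnack constant: one must invoke a version of the inequality (as in Ancona's potential theory for elliptic operators, or Moser's iteration made quantitative) in which the constant is controlled purely by ellipticity bounds and a lower Ricci bound on the fixed ball $B\subset\Sol$ — both of which are uniform because the geometry of $\Sol(\pp,\qq)$ is homogeneous. This is precisely the point for which the authors acknowledge input from A.\ Grigoryan; I would cite the relevant statement from \cite{Ta} (or \cite{An}) rather than reprove Moser iteration, and then carry out only the translation/scaling bookkeeping that transfers the fixed-origin estimate to an arbitrary pair $(\zf,\zf')$.
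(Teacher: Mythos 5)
Your proposal is correct in substance but follows a genuinely different route from the paper. The paper's own (outline) proof first treats the special case where $\Lap_{\ab}$ is the Laplace--Beltrami operator, invoking the Harnack inequalities of Li and Yau \cite{LY} --- applicable because homogeneity of the Riemannian structure gives a lower bound on Ricci curvature --- and then passes to arbitrary drift $\ab$ by conjugation: $\Lap_{\ab}$ differs from the Laplace--Beltrami operator by a multiple of $\partial/\partial z$, so multiplying eigenfunctions by a suitable exponential $e^{cz}$ (as in the proof of Lemma \ref{lem:transfer}) turns $\la$-eigenfunctions of $\Lap_{\ab}$ into eigenfunctions of the Laplace--Beltrami operator with shifted eigenvalue; the conjugation costs only a factor $e^{|c|d}$, which also tends to $1$ as $d \to 0$. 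You instead apply the local elliptic Harnack inequality (Moser/Ancona/Taylor) directly to $\Lap_{\ab}-\la$, and you obtain uniformity of the constant over all base points from the translation invariance of Lemma \ref{lem:lap-inv} together with the isometric transitivity of the group action. Both routes hinge on homogeneity; yours avoids the conjugation step and Li--Yau entirely, while theirs avoids having to worry that the coefficients $e^{2\pp z}$, $e^{-2\qq z}$ of $\Lap_{\ab}$ degenerate in global coordinates (an issue your argument correctly neutralizes by translating back to a neighbourhood of the origin). One caution: your stated reason for $C_d(\la)\to 1$ --- that the Harnack constant of a nearly constant-coefficient operator ``degenerates to $1$ as the radius shrinks'' --- is false as literally phrased, since by dilation invariance of harmonic functions that constant is scale-invariant; what saves your argument is that your ball $B$ has radius $2d+1$ bounded below, so you compare values at points at mutual distance $d\to 0$ of a positive solution defined on a ball of essentially fixed size, and then a gradient estimate (or Harnack combined with interior Schauder bounds) yields $h(\zf')/h(\zf) \le e^{Cd} \to 1$.
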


\begin{proof}[Proof (outline)] 
In the case $\Lap_{\ab}$ is the Laplace-Beltrami operator of $\Sol(\pp,\qq)\,$, 
one can apply well-known Harnack inequalities of {\sc Li and Yau}, see \cite{LY},
because the Riemannian structure is invariant under a group action and thus 
the Ricci curvature is bounded below. 

For arbitrary values of $\ab$, our operator is obtained by adding to the  
Laplace-Beltrami operator a multiple of $\frac{\partial}{\partial z}$, which 
leads just to conjugating
our functions with an exponential in $z$, compare with the proof of 
Lemma \ref{lem:transfer}. Thus, the inequalities hold with any drift
term $\ab$.
\end{proof}

The \emph{Martin kernel} is
\begin{equation}\label{eq:Martin}
\kb_{\ab}(\xf,\zf|\la) = \kb(\xf,\zf|\la)
= \frac{\gb_{\ab}(\xf,\zf|\la)}{\gb_{\ab}(0,\zf|\la)}\,,\quad \zf \ne 0, \xf\,.
\end{equation}

The \emph{Martin compactification} is the smallest compactification 
of the underlying space $\Sol$ (i.e., a Hausdorff space into which
$\Sol$ embeds homeomorphically and densely) such that each function
$\kb_{\ab}(\xf,\cdot|\la)$ has a continuous extension in the second
variable. The \emph{Martin boundary} $\Mart(\la) = \Mart(\Lap_{\ab},\la)$ 
is the ideal boundary added to the space in that compactification. 
The extended kernel is also denoted $\kb_{\ab}(\cdot,\cdot|\la)$

\begin{pro}\label{pro:limit}
Every minimal eigenfunction $h$ in
$\Hc^+\bigl(\Lap^{\Sol(\pp,\qq)}_{\ab},\la\bigr)$, $\la \ge \la_{\min}$,
is of the form
$$
h(\xf) = \kb_{\ab}(\xf,\zeta|\la), \quad\text{where }\; \zeta \in \Mart(\la)\,.
$$
That is, there is a (suitable) sequence $(\zf_n)$ in $\Sol$ with
$\dist(0,\zf_n) \to \infty$, such that
$$
h(\xf) = \lim_{n \to \infty} \kb(\xf,\zf_n|\la)\,.
$$
\end{pro}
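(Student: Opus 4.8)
The plan is to invoke the Poisson--Martin representation theory for the operator $\Lap_{\ab}$ and then to extract the minimal eigenfunctions by an elementary convexity argument. First I would record the general set-up. By Propositions \ref{pro:greenkernel} and \ref{pro:harnack}, the Green kernel $\gb_{\ab}(\cdot,\cdot|\la)$ is finite and strictly positive and satisfies a uniform local Harnack inequality; by the theory of {\sc Ancona}~\cite{An} and {\sc Taylor}~\cite{Ta} (see also \cite[Ch.~VI]{GJT}), this guarantees that the Martin compactification is a metrizable compactification of $\Sol(\pp,\qq)$, that each extended kernel $\kb_{\ab}(\xf,\cdot|\la)$ is continuous, that every $\kb_{\ab}(\cdot,\zeta|\la)$ lies in $\Hc^+(\Lap_{\ab},\la)$ with value $1$ at $\of$, and that one has the integral representation: for every $h \in \Hc^+(\Lap_{\ab},\la)$ with $h(\of)=1$ there is a \emph{unique} Borel probability measure $\mu_h$ carried by the minimal Martin boundary $\Mart_{\min}(\la) = \{\zeta \in \Mart(\la) : \kb_{\ab}(\cdot,\zeta|\la) \text{ is minimal}\}$ such that $h(\xf) = \int_{\Mart_{\min}(\la)} \kb_{\ab}(\xf,\zeta|\la)\,d\mu_h(\zeta)$.

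Granting this, the core of the proof is to show that a minimal $h$ has $\mu_h$ equal to a point mass. Fix a Borel set $A \subseteq \Mart_{\min}(\la)$ with $\mu_h(A) > 0$ and put $f_A(\xf) = \int_A \kb_{\ab}(\xf,\zeta|\la)\,d\mu_h(\zeta)$. Then $f_A$ is a non-zero element of $\Hc^+(\Lap_{\ab},\la)$ with $0 \le f_A \le h$, so by minimality of $h$ the ratio $f_A/h$ is a constant $c_A$; evaluating at the origin and using $\kb_{\ab}(\of,\cdot|\la) \equiv 1$ gives $c_A = f_A(\of) = \mu_h(A)$. Consequently $h = f_A/\mu_h(A)$ is represented by the probability measure $\mu_h|_A/\mu_h(A)$ on $\Mart_{\min}(\la)$, and by uniqueness of the representing measure this equals $\mu_h$; that is, $\mu_h|_A = \mu_h(A)\,\mu_h$. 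Evaluating this identity of measures on the set $A$ yields $\mu_h(A) = \mu_h(A)^2$. Hence $\mu_h(A) \in \{0,1\}$ for every Borel set $A$. Since $\Mart_{\min}(\la)$ is metrizable, a $\{0,1\}$-valued probability measure is a Dirac mass $\de_{\zeta}$ at a single $\zeta \in \Mart_{\min}(\la) \subseteq \Mart(\la)$, so $h(\xf) = \kb_{\ab}(\xf,\zeta|\la)$.

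The displayed limit formula is then immediate from the construction of the Martin boundary: the point $\zeta$ is, by definition, a limit of points $\zf_n \in \Sol$ in the Martin topology, so that $\kb_{\ab}(\xf,\zeta|\la) = \lim_n \kb_{\ab}(\xf,\zf_n|\la)$ for every $\xf$ by continuity of the extended kernel; and since a bounded sequence would subconverge to an interior point of $\Sol$, whose kernel is not a boundary kernel, any sequence defining $\zeta$ must satisfy $\dist(\of,\zf_n) \to \infty$.

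The step I expect to be the genuine obstacle is the one I have imported rather than proved: establishing the existence and \emph{uniqueness} of the minimal integral representation for an elliptic operator on a non-compact manifold. In the discrete Markov-chain setting this is classical, but here it rests on elliptic regularity together with the Harnack inequality of Proposition \ref{pro:harnack} (itself the delicate analytic input, obtained from the Li--Yau estimates \cite{LY} via the lower Ricci bound and a drift conjugation). Once that machinery is in place, the passage from the representation to the identification of minimal functions with single Martin kernels, carried out above, is purely formal.
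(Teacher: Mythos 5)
Your argument is correct, but there is nothing in the paper to compare it against line by line: Proposition \ref{pro:limit} is one of the statements the paper explicitly ``subsumes \dots without all proofs'' from the Martin boundary theory for elliptic operators, citing Ancona \cite{An}, Taylor \cite{Ta} and \cite[Ch.~VI]{GJT}. So your proposal supplies a proof where the paper deliberately gives none, and the proof you give is the standard extremality argument: represent the minimal $h$ by a measure $\mu_h$ on $\Mart_{\min}(\la)$, use minimality together with the normalisation $\kb_{\ab}(\of,\cdot\,|\la)\equiv 1$ to get $\mu_h|_A=\mu_h(A)\,\mu_h$, hence $\mu_h(A)\in\{0,1\}$ for every Borel $A$, hence $\mu_h=\de_\zeta$ and $h=\kb_{\ab}(\cdot,\zeta|\la)$. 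Three remarks on how this sits relative to the paper. First, you invoke Proposition \ref{pro:martin}, which the paper states \emph{after} Proposition \ref{pro:limit}; this is not circular, since in the cited sources the representation theorem (existence and uniqueness) is proved independently of the identification of minimal functions with Martin kernels, but your proof does reverse the paper's ordering of the two quoted facts. Second, the uniqueness half of Proposition \ref{pro:martin}, which you single out as the heavy imported ingredient, can actually be avoided: from $\int_A \kb(\xf,\cdot\,|\la)\,d\mu_h=\mu_h(A)\,h(\xf)$ for \emph{every} Borel $A$ one gets, for each fixed $\xf$, that $\kb(\xf,\zeta|\la)=h(\xf)$ for $\mu_h$-almost every $\zeta$; letting $\xf$ run through a countable dense subset of $\Sol$ and using continuity of $\kb(\cdot,\zeta|\la)$ and of $h$ gives $h=\kb(\cdot,\zeta|\la)$ for $\mu_h$-a.e.\ $\zeta$, so the existence half of the representation suffices (this is the classical route in the discrete setting, cf.\ \cite{Wo-markov}). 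Third, your final step from ``$\{0,1\}$-valued'' to ``Dirac mass'' uses that the Martin compactification is a compact metrizable (hence second countable) space, so that a $\{0,1\}$-valued Borel probability measure must be a point mass; that is true here, by the Harnack inequality of Proposition \ref{pro:harnack} and separability, but it is exactly the kind of point one should state rather than leave implicit, as you essentially did in your set-up.
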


The \emph{minimal Martin boundary} $\Mart_{\min}(\la) = \Mart_{\min}(\Lap_{\ab},\la)$
consists of all $\zeta \in \Mart(\la)$ for which $\kb_{\ab}(\cdot,\zeta|\la)$
is minimal. It is a Borel subset of $\Mart(\la)$.
The \emph{Poisson-Martin representation theorem} says the following.

\begin{pro}\label{pro:martin}
For every function $h \in \Hc^+\bigl(\Lap^{\Sol(\pp,\qq)}_{\ab},\la\bigr)$,
there is a unique Borel measure $\nu^h$ on $\Mart_{\min}(\la)$
such that 
$$
h(\xf) = \int_{\Mart_{\min}(\la)} \kb(\xf, \cdot|\la) \,d\nu^h\quad 
\text{for every } \xf \in \Sol.
$$
\end{pro}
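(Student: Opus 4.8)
The plan is to deduce this from the general Poisson--Martin representation theory, for which Propositions \ref{pro:greenkernel}, \ref{pro:harnack} and \ref{pro:limit} have assembled exactly the inputs one needs. First I would fix $\la \ge \la_{\min}$ and consider the convex cone $\Hc^+ = \Hc^+\bigl(\Lap_{\ab},\la\bigr)$ together with its base
$$
B = \{ h \in \Hc^+ : h(\of) = 1 \}.
$$
The Harnack inequality of Proposition \ref{pro:harnack} bounds the ratios $h(\zf')/h(\zf)$ uniformly over $B$ on each compact set, so by Arzel\`a--Ascoli $B$ is relatively compact in $C(\Sol)$ with the topology of uniform convergence on compacta; interior elliptic regularity shows that any locally uniform limit of solutions of $\Lap_{\ab}h = \la h$ is again such a solution, and the normalization $h(\of)=1$ survives in the limit, so $B$ is compact, convex, and metrizable (as $\Sol$ is $\sigma$-compact and separable).

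The second step is to run Choquet's theorem on $B$: every $h \in B$ is the barycentre of a probability measure supported on the extreme points $\mathrm{ext}(B)$. A short order-theoretic argument identifies $\mathrm{ext}(B)$ with precisely the minimal functions of the statement: if $h$ is extreme and $0 \le f \le h$ is an eigenfunction, writing $h$ as the convex combination of $f/f(\of)$ and $(h-f)/(h-f)(\of)$ forces $f = f(\of)\,h$, and the converse is immediate. By Proposition \ref{pro:limit} each minimal function equals $\kb(\cdot,\zeta|\la)$ for a unique $\zeta \in \Mart_{\min}(\la)$, and the map $\zeta \mapsto \kb(\cdot,\zeta|\la)$ is a continuous injection (continuous because the kernel extends continuously to the Martin boundary, injective by the very construction of that boundary). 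By the Lusin--Souslin theorem it is therefore a Borel isomorphism of $\Mart_{\min}(\la)$ onto $\mathrm{ext}(B)$, and I would push the Choquet measure forward along it to obtain a Borel measure $\nu^h$ on $\Mart_{\min}(\la)$ with $h = \int \kb(\cdot,\zeta|\la)\,d\nu^h(\zeta)$. This settles existence.

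The hard part is uniqueness of $\nu^h$, that is, showing that $B$ is a Choquet \emph{simplex} and not merely a compact convex set. The cleanest route I would take is to $h$-transform the problem into a genuinely probabilistic one: conjugating $\Lap_{\ab}-\la$ by a fixed strictly positive eigenfunction turns the positive $\la$-eigenfunctions into the positive harmonic functions of an honest transient Markov process on $\Sol$ whose minimal Martin boundary coincides with the present one. For such a process the representing measure is forced to be the exit distribution of the limiting boundary random variable under the associated Doob $h$-process started at $\of$, and this distribution is manifestly determined by $h$ alone; equivalently one invokes the classical lattice (Riesz) property of the cone of positive superharmonic functions, which is exactly what makes $B$ a simplex and yields uniqueness of the representing measure on the extreme points. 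Either way, uniqueness is the one substantive point; the remaining measurability assertions are standard, and for the full machinery I would simply cite Ancona~\cite{An} and Taylor~\cite{Ta}.
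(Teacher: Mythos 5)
Your proposal is correct in outline, but note how it relates to the paper: the paper does not prove Proposition \ref{pro:martin} at all. It is presented as imported background material (``we subsume the necessary material without all proofs''), and immediately after the statement the authors write that all of this holds for general manifolds and elliptic operators, citing \cite{Ta} (and \cite{An}). So your route is genuinely different in that you reconstruct the proof of the general representation theorem rather than quote it. Your reconstruction is the standard one and hangs together: the Harnack inequality of Proposition \ref{pro:harnack} together with interior elliptic estimates gives compactness of the normalized base $B$ in the compact--open topology; Choquet's theorem yields a representing measure on the extreme points; your order-theoretic identification of extreme points with minimal functions is right (with the degenerate cases $f(\of) \in \{0,1\}$ disposed of by the minimum principle); and Proposition \ref{pro:limit} plus Lusin--Souslin lets you transport the measure to $\Mart_{\min}(\la)$. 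What your approach buys is self-containedness and a clear view of exactly where the paper's assembled ingredients (Propositions \ref{pro:greenkernel}, \ref{pro:harnack}, \ref{pro:limit}) enter. What it costs: the one substantive point, uniqueness, is not actually proved in your sketch either --- both routes you offer (the lattice/simplex property of the cone, or Dynkin's identification of $\nu^h$ as the exit distribution of the Doob $h$-process, which presupposes a.s.\ convergence of that process to the minimal boundary) are themselves nontrivial theorems, so in the end you lean on the same classical machinery of \cite{An} and \cite{Ta} that the paper cites wholesale. That is a legitimate division of labor rather than a gap, but you should say so explicitly rather than present uniqueness as settled.
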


All this is of course true for more general manifolds and elliptic
operators; see \cite{Ta}.

While we are not able to determine the whole Martin compactification, that is,
the directions of convergence of the Martin kernels, we shall determine
precisely the minimal positive $\la$-eigenfunctions for each $\la \ge \la_{\min}$.

\section{Positive harmonic functions on $\Sol(\pp,\qq)$}\label{sect:main}

We now show that every positive eigenfunction of our Laplacian on $\Sol(\pp,\qq)$ 
splits as a sum of two eigenfunctions that live on the two respective hyperbolic
planes which make up $\Sol$, and we determine precisely all minimal positive
eigenfunctions. The first step is the following.

\begin{thm}\label{thm:minimal}
Let $h \in \Hc^+\bigl(\Lap^{\Sol(\pp,\qq)}_{\ab},\la\bigr)$ be minimal, where 
$\la \ge -\ab^2/2$. 

Then $h(x,y,z) = h_1(x,z)$, where $h_1$ is minimal in 
$\Hc^+\bigl(\Lap^{\Sol(\pp,\qq)}_{\ab},\la\bigr)$, or
$h(x,y,z) = h_2(y,-z)$, where $h_2$ is minimal in 
$\Hc^+\bigl(\Lap^{\Hb(\qq)}_{-\ab},\la\bigr)$.
\end{thm}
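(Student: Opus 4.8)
The plan is to realise the minimal function geometrically as a limit of normalised Green kernels and to read off its dependence on the two lateral coordinates from the asymptotic direction of that limit. Throughout I identify $\Sol(\pp,\qq)$ with $\R^3$ and use the structural fact that drives everything: by \eqref{eq:stochint} and the time--change representation \eqref{eq:XYB}, conditionally on the vertical path $(Z_s)_{s\ge 0}$ the coordinates $X_t$ and $Y_t$ are \emph{independent} centred Gaussians with variances $V_t(\pp)=\int_0^t e^{2\pp Z_s}\,ds$ and $V_t(-\qq)=\int_0^t e^{-2\qq Z_s}\,ds$ (this conditional independence is precisely the content of Knight's theorem in the proof of Theorem \ref{thm:clt}). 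Since a function $h_1\circ\pi_1$ depends only on $(x,z)$ and a function $h_2\circ\pi_2$ only on $(y,-z)$, the statement to be proved is exactly that a minimal $h$ is \emph{independent of $y$} or \emph{independent of $x$}; once that is established, Lemma \ref{lem:lift-minimal} upgrades the base function to a minimal eigenfunction on the corresponding hyperbolic plane, $\Hb(\pp)$ or $\Hb(\qq)$.

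By Proposition \ref{pro:limit} there is a sequence $\zf_n=(x_n,y_n,z_n)$ with $\dist_{\Sol}(\of,\zf_n)\to\infty$ such that $h(\xf)=\lim_n \kb(\xf,\zf_n|\la)$, and after passing to a subsequence I may assume $z_n\to z_\infty\in[-\infty,+\infty]$. The heart of the argument is the case $z_\infty=\pm\infty$. Suppose $z_n\to+\infty$. Expressing the Green kernel through the heat kernel of Proposition \ref{pro:heatkernel} and the conditional Gaussian representation above, the transition density to $\zf_n$ factorises, given the height path, into an $x$-factor controlled by $V(\pp)$ and a $y$-factor controlled by $V(-\qq)$. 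As the height is forced to climb to $z_n\to+\infty$, the accumulated $X$-variance $V(\pp)$ diverges while the $Y$-variance $V(-\qq)$ stays bounded, so the starting $x$-coordinate of the base point $\xf$ is \emph{forgotten} in the ratio $\gb(\xf,\zf_n|\la)/\gb(\of,\zf_n|\la)$, whereas the $y$- and $z$-coordinates are retained. In the limit $\kb(\cdot,\zf_n|\la)$ then depends only on $(y,-z)$; symmetrically, $z_n\to-\infty$ produces a limit depending only on $(x,z)$. The Harnack inequality of Proposition \ref{pro:harnack}, together with the distance estimates of Proposition \ref{pro:metric}, would be used to make these comparisons uniform and to ensure the surviving factor is a genuine positive eigenfunction on the relevant plane, whose explicit shape is then pinned down by Lemma \ref{lem:transfer}.

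It remains to dispose of $z_\infty$ finite, that is $z_n\to z_0$ while (since $\dist_{\Sol}(\of,\zf_n)\to\infty$) $|x_n|\to\infty$ or $|y_n|\to\infty$; these are the sequences tending to an interior point of one of the two disks that make up the boundary at infinity. Here the height stays bounded, so \emph{both} lateral Gaussians remain non-degenerate and neither lateral coordinate is forgotten; I would argue that the resulting limit kernel, if it exists, is a non-trivial superposition of the lift-type kernels produced above and hence is \emph{not} minimal, which contradicts the minimality of $h$ and rules these sequences out. I expect this last step to be the main obstacle: making rigorous the dichotomy ``one coordinate is forgotten, the other remembered'' at the level of Green-kernel ratios, and excluding the bounded-height boundary sequences, is exactly where the conditional-independence structure and the quantitative control from Propositions \ref{pro:metric} and \ref{pro:harnack} must be pushed hardest. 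Granting these, every minimal $h$ is forced to be $h_1\circ\pi_1$ or $h_2\circ\pi_2$, and Lemma \ref{lem:lift-minimal} delivers minimality of the base function, completing the proof.
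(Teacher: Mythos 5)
Your skeleton coincides with the paper's (represent $h$ as $\lim_n \kb(\cdot,\zf_n|\la)$ via Proposition \ref{pro:limit}, split according to the behaviour of $z_n$, and finish with Lemma \ref{lem:lift-minimal}), but the engine you propose for the unbounded-height cases has a genuine gap. You want to show that the ratio $\gb(\xf,\zf_n|\la)/\gb(\of,\zf_n|\la)$ itself ``forgets'' the $x$-coordinate when $z_n\to+\infty$, by a conditional-Gaussian/heat-kernel factorisation. That is exactly the Martin-boundary identification problem, which the paper explicitly states it is \emph{not} able to solve, and your ``$V_t(\pp)$ diverges hence $x$ is forgotten'' step is a heuristic: the Green kernel integrates over all height paths, including those that dip far down before climbing to $z_n$, so neither variance is uniformly controlled, and no heat-kernel estimates sharp enough to justify the factorisation are available. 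Note also that your argument for Cases $z_n\to\pm\infty$ never uses minimality of $h$ --- a warning sign, since translation invariance of \emph{arbitrary} Martin limits is a much stronger claim than what is needed. The paper's actual mechanism is different and much softer: for $\gf_a=(a,0,0)$, if $\inf_n z_n>-\infty$ then $\dist_{\Sol}(\gf_a\zf_n,\zf_n)=\dist_{\Hb(\pp)}\bigl((x_n+a,z_n),(x_n,z_n)\bigr)$ is bounded by some $d_a$, so group invariance of $\gb$ together with the Harnack inequality of Proposition \ref{pro:harnack} gives $\kb(\gf_a\xf,\zf_n|\la)\le C_{d_a}\,\kb(\xf,\zf_n|\la)$; letting $n\to\infty$ yields $\tau_a h\le C_{d_a}h$, \emph{minimality} then forces $\tau_a h/h$ to be constant, and since $\dist_{\Sol}(\gf_a\xf,\xf)\to 0$ as $z\to+\infty$ the property $C_d(\la)\to 1$ ($d\to 0$) pins that constant to $1$. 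This replaces all asymptotic analysis of the Green kernel by one soft comparison plus minimality.

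Your treatment of the bounded-height case ($z_n\to z_0$ finite) is not merely incomplete but aimed at the wrong conclusion. You propose to derive a contradiction by arguing that the limit kernel is a non-trivial superposition, hence non-minimal, and so to ``rule out'' such sequences. But there is no contradiction to be had in general: when $\la=\la_{\min}=-\ab^2/2$, the function $(x,y,z)\mapsto e^{-\ab z}$ is a genuine minimal element of $\Hc^+\bigl(\Lap^{\Sol(\pp,\qq)}_{\ab},\la_{\min}\bigr)$ (Theorem \ref{thm:main}); it depends on neither $x$ nor $y$, and it is perfectly consistent with the statement of the theorem, being simultaneously of the form $h_1(x,z)$ and $h_2(y,-z)$ with $h_1$, $h_2$ minimal on the respective hyperbolic planes. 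The paper handles bounded heights not by exclusion but by applying \emph{both} translation-invariance claims: if $(z_n)$ is bounded above and below, then $h$ is invariant under translations in $x$ and in $y$, hence depends only on $z$, and Lemma \ref{lem:lift-minimal} (together with Remark \ref{rem:exponential}) still delivers the conclusion. So even granting your unproved kernel asymptotics, your Case 3 argument would fail precisely at $\la=\la_{\min}$, where the purely vertical exponential shows the dichotomy you rely on is false.
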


\begin{proof}
Let $h$ be a minimal eigenfunction in $\Hc^+\bigl(\Lap_{\ab},\la\bigr)$.
Then
$h = \lim_{n \to \infty} \kb(\cdot,\zf_n|\la)$.
Write $\zf_n = (x_n,y_n,z_n)$.
\\[4pt]
\emph{Claim.} (a) If $\inf_n z_n > -\infty\,$, then for each
$a \in \R$,
$$
h(x+a,y,z) = h(x,y,z) \quad \text{for all}\quad
(x,y,z) \in \Sol\,.
$$
(b) If $\sup_n z_n < +\infty\,$, then for each
$b \in \R$,
$$
h(x,y+b,z) = h(x,y,z) \quad \text{for all}\quad
(x,y,z) \in \Sol\,.
$$
To prove part (a) of this claim, let $a \in \R$ and consider the group element
$$
\gf_a = (a,0,0) = \left(\begin{smallmatrix} 1 & a & 0         \\[2pt]
				    0 & 1 & 0         \\[2pt]
				    0 & 0 & 1
		  \end{smallmatrix}\right) \in \Sol(\pp,\qq)\,.
$$
We abbreviate $\tau_a = \tau_{\gf_a}$. Let $\xf = (x,y,z) \in \Sol$.
Then, by \eqref{eq:gtranslate},
$\tau_a h(\xf) = h(\gf_a \xf) = h(x+a,y,z)$, and Lemma
\ref{lem:lap-inv} tells us that $\tau_a h$ is in
$\Hc^+\bigl(\Lap_{\ab},\la\bigr)$.
Now by \eqref{eq:preservedist}
$$
\dist(\gf_a\zf_n\,,\zf_n) = \dist_{\Sol}\bigl((x_n+a,y_n,z_n),(x_n,y_n,z_n)
= \dist_{\Hb(\pp)}\bigl((x_n+a,z_n),(x_n,z_n)\bigr)\,.
$$
Elementary properties of the hyperbolic metric imply that
$$
\dist_{\Hb(\pp)}\bigl((x_n+a,z_n),(x_n,z_n)\bigr) =
\dist_{\Hb(\pp)}\bigl((a,z_n),(0,z_n)\bigr)
\le \dist_{\Hb(\pp)}\bigl((a,c),(0,c)\bigr) = d_a\,,
$$
where $c= \inf_n z_n$.
Let $C_{d_a}(\la)$ be the corresponding Harnack constant in Lemma
\ref{pro:harnack}.
Then, using that $\gf(\cdot,\cdot|\la)$ is $\Sol(\pp,\qq)$-invariant,
$$
\kb(\gf_{a}\xf\,,\zf_n|\la) =
\frac{\gb(\gf_{a}\xf\,,\zf_n|\la)}{\gb(\gf_{a}\xf\,,\gf_{a}\zf_n|\la)}\,
\frac{\gb(\gf_{a}\xf\,,\gf_{a}\zf_n|\la)}{\gb(0\,,\zf_n|\la)}
\le C_{d_a} \, \kb(\xf\,,\zf_n|\la)\,.
$$
Letting $n \to \infty$, we obtain
$$
\tau_{a}h(\xf)= h(\gf_{a}\xf) \le C_{d_a} \, h(\xf) \quad
\text{for all} \; \xf \in \Sol\,.
$$
Now minimality of $h$ implies that the function $\tau_{a}h/h$ is constant.
For $\xf = (x,y,z) \in \Sol$,
$$
\dist_{\Sol}(\gf_a \xf,\xf) = \dist_{\Hb(\pp)}\bigl((x+a,z),(x,a)\bigl) \to 0\,,
\quad\text{if}\quad z \to +\infty\,.
$$
Therefore the second statement in Lemma \ref{pro:harnack} implies
that $h(\gf_{a}\xf)/h(\xf) \to 1$ as $z \to +\infty\,$, and we conclude
that $\tau_{a}h/h \equiv 1\,$.
This proves statement (a) of the claim, and statement (b) follows by exchanging
the roles of the $x$- and $y$-coordinates and changing the sign of~$z$.

\smallskip

Now $(z_n)$ must have a subsequence which converges to a limit
in $[-\infty\,,\,+\infty]$. We may assume without loss of generality
that $(z_n)$ itself converges.
\\[4pt]
\emph{Case 1.} $\; z_n \to \infty\,.$ Then we can apply part (a) of the
Claim, and conclude that $h$ depends only on $(y,z)$. By Lemma
\ref{lem:lift-minimal}, there is a function $h_2$ on $\Hb(\qq)$
which is minimal in $\Hc^+\bigl(\Lap^{\Hb(\qq)}_{-\ab},\la\bigr)$,
such that $h(x,y,z) = h_2(y,-z)$ for all $\xf = (x,y,z) \in \Sol$.
\\[4pt]
\emph{Case 2.} $\; z_n \to -\infty\,.$ Then we can apply part (b) of the
Claim, and again by Lemma
\ref{lem:lift-minimal}, there is a function $h_1$ on $\Hb(\pp)$
which is minimal in $\Hc^+\bigl(\Lap^{\Hb(\pp)}_{\ab},\la\bigr)$,
such that $h(x,y,z) = h_1(x,z)$ for all $\xf = (x,y,z) \in \Sol$.
\\[4pt]
\emph{Case 3.} $\; z_n \to z_0 \in \R$. Then we can apply both parts (a) and
(b) of the claim, and there is a function $\wt h$ on $\R$
such that $h(x,y,z) = \wt h(z)$ for all $\xf = (x,y,z) \in \Sol$.
It must be minimal both as a function $(x,z) \mapsto \wt h(z)$
in $\Hc^+\bigl(\Lap^{\Hb(\pp)}_{\ab},\la\bigr)$ and as a function
$(y,z) \mapsto \wt h(-z)$ in $\Hc^+\bigl(\Lap^{\Hb(\qq)}_{-\ab},\la\bigr)$.
Of course, it must also be a minimal element of
$\Hc^+\bigl(\Lap^{\R}_{\ab},\la\bigr)$.
\end{proof}

\begin{rmk}\label{rem:exponential}
If $h \in \Hc^+\bigl(\Lap^{\Sol(\pp,\qq)}_{\ab},\la\bigr)$ is minimal
and depends only on $z$ then it must arise by lifting a minimal element of 
$h \in \Hc^+\bigl(\wt \Lap_{\ab},\la\bigr)$ from $\R$ to $\Sol$. 
That is, we must have 
$h(x,y,z) = e^{\alpha z}$, where $\alpha =\pm \sqrt{\ab^2+2\la}-\ab$.
Furthermore, in this case, the function $(x,z) \mapsto e^{\alpha z}$
must be minimal in $\Hc^+\bigl(\Lap^{\Hb(\pp)}_{\ab},\la\bigr)$,
so that -- by Lemma \ref{lem:transfer} -- we can only have the ``+'' sign,
that is, $\alpha =\alpha(\la,\ab)$.
We shall see below that the corresponding function can really be a minimal 
$\la$-eigenfunction  on $\Sol$ only when $\la = \la_{\min}\,$.
\end{rmk}

\begin{cor}\label{cor:harmonic-sum} 
If $h \in \Hc^+\bigl(\Lap^{\Sol(\pp,\qq)}_{\ab},\la\bigr)$, where 
$\la \ge -\ab^2/2$, then there are nonnegative functions
$h_1 \in \Hc^+\bigl(\Lap^{\Hb(\pp)}_{\ab},\la\bigr)$ and
$h_2 \in \Hc^+\bigl(\Lap^{\Hb(\qq)}_{-\ab},\la\bigr)$ such that
for all $\xf = (x,y,z) \in \Sol(\pp,\qq)$,
$$
h(x,y,z) = h_1(x,z) + h_2(y,-z).
$$
\end{cor}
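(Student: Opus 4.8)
The plan is to feed the Poisson--Martin representation of Proposition \ref{pro:martin} through the dichotomy established in Theorem \ref{thm:minimal}. By Proposition \ref{pro:martin} I would first write
$$
h(\xf) = \int_{\Mart_{\min}(\la)} \kb(\xf,\zeta|\la)\, d\nu^h(\zeta),
$$
with $\nu^h$ a finite Borel measure on the minimal Martin boundary. For each $\zeta \in \Mart_{\min}(\la)$ the kernel $\kb(\cdot,\zeta|\la)$ is a minimal element of $\Hc^+(\Lap_{\ab},\la)$, so Theorem \ref{thm:minimal} forces it to depend either only on $(x,z)$ or only on $(y,-z)$. Accordingly I would partition the boundary: let $\Mart_1$ be the set of $\zeta$ for which $\kb(\cdot,\zeta|\la)$ depends only on $(x,z)$, and put $\Mart_2 = \Mart_{\min}(\la) \setminus \Mart_1$, so that on $\Mart_2$ the kernel depends only on $(y,-z)$ and genuinely involves $y$ (the $z$-only kernels of Case~3 are absorbed into $\Mart_1$).

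Next I would check that $\Mart_1$ is Borel. Since $\zeta \mapsto \kb(\xf,\zeta|\la)$ is continuous on the Martin compactification for each fixed $\xf$, and since ``depends only on $(x,z)$'' is equivalent to the countable family of identities $\kb((x,y,z),\zeta|\la) = \kb((x,y',z),\zeta|\la)$ as $x,y,y',z$ range over the rationals (density plus continuity in $\xf$ upgrade this to all points), the set $\Mart_1$ is a countable intersection of closed conditions, hence Borel; likewise $\Mart_2$. Splitting the integral then yields $h = g_1 + g_2$, where
$$
g_1(\xf) = \int_{\Mart_1} \kb(\xf,\zeta|\la)\, d\nu^h(\zeta) \AND
g_2(\xf) = \int_{\Mart_2} \kb(\xf,\zeta|\la)\, d\nu^h(\zeta).
$$
By the standard Martin theory underlying Proposition \ref{pro:martin}, each $g_i$ is a nonnegative element of $\Hc^+(\Lap_{\ab},\la)$: the integral of $\la$-eigenfunctions against a finite measure is again a $\la$-eigenfunction, the differentiation under the integral sign being justified by the local Harnack control of Proposition \ref{pro:harnack}. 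By construction $g_1$ depends only on $(x,z)$ and $g_2$ only on $(y,-z)$.

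It then remains to descend $g_1$ and $g_2$ to the two hyperbolic planes. Since $g_1$ depends only on $(x,z)$, it is of the form $g_1 = h_1 \circ \pi_1$ for a nonnegative function $h_1$ on $\Hb(\pp)$; from $\Lap_{\ab}(h_1\circ\pi_1) = (\Lap^{\Hb(\pp)}_{\ab} h_1)\circ\pi_1$ together with $\Lap_{\ab} g_1 = \la g_1$ and surjectivity of $\pi_1$, I get $\Lap^{\Hb(\pp)}_{\ab} h_1 = \la h_1$, so $h_1 \in \Hc^+(\Lap^{\Hb(\pp)}_{\ab},\la)$. Symmetrically $g_2 = h_2 \circ \pi_2$ with $h_2 \in \Hc^+(\Lap^{\Hb(\qq)}_{-\ab},\la)$, using that $\pi_2$ carries $z$ to $-z$. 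Then $h(x,y,z) = g_1 + g_2 = h_1(x,z) + h_2(y,-z)$, as claimed. The only genuinely technical points are the measurability of the partition $\{\Mart_1,\Mart_2\}$ and the fact that the split integrals remain eigenfunctions; both are routine consequences of the continuity of the Martin kernel and the Harnack inequalities, so I expect no real obstacle here -- the whole substance of the corollary is already packaged in Theorem \ref{thm:minimal}.
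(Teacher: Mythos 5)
Your proof is correct and takes essentially the same route as the paper: both feed the Poisson--Martin representation of Proposition \ref{pro:martin} through the dichotomy of Theorem \ref{thm:minimal} and split the representing measure on $\Mart_{\min}(\la)$ into the part carried by kernels depending only on $(x,z)$ and the part carried by kernels depending only on $(y,-z)$. The only difference is cosmetic: the paper makes the partition concrete by parametrising $\Mart_{\min}(\la)$ by a subset of $\vartheta \Hb(\pp) \cup \vartheta^* \Hb(\qq)$ via the explicit kernels $P_{\pp,\ab,\la}$ and $P_{\qq,-\ab,\la}$ of Lemma \ref{lem:transfer} (so that $h_1,h_2$ are manifestly positive eigenfunctions on the two planes), whereas you partition abstractly and then supply the Borel-measurability of the partition and the descent through $\pi_1,\pi_2$ by hand.
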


\begin{proof}
We see from Theorem \ref{thm:minimal} that the set of all minimal 
$\la$-eigenfunctions on $\Sol(\pp,\qq)$
is contained in the union of the sets of minimal $\la$-eigenfunctions on 
$\Hb(\pp)$ and $\Hb(\qq)$, with a change of the sign of $z$ for the latter, 
according to the above cases.
Thus, taking into account Remark \ref{rem:exponential}, 
$\Mart_{\min}(\la)$ can be parametrised by a subset of
the disjoint union 
$\vartheta \Hb(\pp) \cup \vartheta^* \Hb(\qq) \cong 
\Bigl(\vartheta \Hb(\pp) \times \{ \varpi_{\qq} \}\Bigr) \cup 
\Bigl(\{ \varpi_{\pp} \} \times \vartheta^* \Hb(\qq)\Bigr) \subset \Sol(\pp,\qq)$,
or in other terms, of the ``8''-shaped outer part of the geometric boundary of $\Sol$ 
(without the interiors of the two disks).

By Proposition \ref{pro:martin}, for every function 
$h \in \Hc^+\bigl(\Lap^{\Sol(\pp,\qq)}_{\ab},\la\bigr)$, there is a Borel
measure $\nu=\nu^h$ on $\Mart_{\min}(\la)$
that yields the integral representation
of $h$. Now let $\nu_1$ be the restriction of $\nu$ to $\vartheta \Hb(\pp)$
and $\nu_2$ the restriction to  $\vartheta^* \Hb(\qq)$. 
Then we get for every $\xf = (x,y,z) \in \Sol$ 
$$
h(\xf) = \int_{\vartheta \Hb(\pp)}\!\! 
P_{\pp,\ab,\la}\bigl((x,z),\xi\bigr)\, d\nu_1(\xi)
+ \int_{\vartheta^* \Hb(\qq)}\!\! 
P_{\qq,-\ab,\la}\bigl((y,-z),\eta\bigr)\, d\nu_2(\eta)
= h_1(x,z) + h_2(y,-z)\,,
$$
as proposed.
\end{proof}

\begin{cor}\label{cor:Liouville} The Laplacian $\Lap^{\Sol(\pp,\qq)}_{\ab}$
has the (weak) Liouville property, i.e., all bounded harmonic functions
on $\Sol$ are constant, if and only if the rate of escape
$\ab$ vanishes.
\end{cor}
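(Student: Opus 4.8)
The plan is to prove the two implications of the equivalence separately, in each case reducing the question on $\Sol$ to the two hyperbolic factors by means of the splitting of Corollary \ref{cor:harmonic-sum} together with the Liouville dichotomy on $\Hb(\pp)$ already recorded in Remark \ref{rmk:Liouville} (weak Liouville holds on $\Hb(\pp)$ with drift $\ab$ precisely when $\ab \ge 0$). Since harmonic functions correspond to $\la = 0$, which satisfies $\la \ge -\ab^2/2$ for every $\ab$, the decomposition result is available throughout.

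For the implication $\ab = 0 \Rightarrow$ weak Liouville, I would start from a bounded harmonic function $h$ on $\Sol$ with $|h| \le M$. Then $M + h$ is a positive harmonic function, so Corollary \ref{cor:harmonic-sum} produces nonnegative $h_1 \in \Hc^+\bigl(\Lap^{\Hb(\pp)}_{0},0\bigr)$ and $h_2 \in \Hc^+\bigl(\Lap^{\Hb(\qq)}_{0},0\bigr)$ with $M + h(x,y,z) = h_1(x,z) + h_2(y,-z)$. The key point is that each summand is automatically bounded: for any point we have $h_1(x,z) \le h_1(x,z) + h_2(y,-z) = M + h(x,y,z) \le 2M$ because $h_2 \ge 0$, and symmetrically $0 \le h_2 \le 2M$. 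Since $\ab = 0$ makes both hyperbolic drifts $\ab$ and $-\ab$ nonnegative, Remark \ref{rmk:Liouville} applies on each factor and forces $h_1$ and $h_2$ to be constant. Hence $M + h$, and therefore $h$, is constant.

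For the converse I would argue by contraposition: assuming $\ab \ne 0$, I exhibit a non-constant bounded harmonic function. Take $\ab > 0$, the case $\ab < 0$ being symmetric under interchanging the two factors. The drift appearing in \eqref{eq:lap2-hyp} is $-\ab < 0$, so by Remark \ref{rmk:Liouville} the operator $\Lap^{\Hb(\qq)}_{-\ab}$ fails the weak Liouville property; choose a non-constant bounded $h_2 \in \Hc\bigl(\Lap^{\Hb(\qq)}_{-\ab}\bigr)$. Lifting along $\pi_2$ and using $\Lap_{\ab}(h_2 \circ \pi_2) = \bigl(\Lap^{\Hb(\qq)}_{-\ab} h_2\bigr)\circ \pi_2 = 0$, the function $h(x,y,z) = h_2(y,-z)$ is bounded, harmonic on $\Sol$, and non-constant since $\pi_2$ is surjective. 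Thus weak Liouville fails on $\Sol$ whenever $\ab \ne 0$, which completes the equivalence.

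I do not expect a genuine obstacle, as the analytic content is already packaged in Corollary \ref{cor:harmonic-sum} and Remark \ref{rmk:Liouville}. The one step requiring care is the boundedness of the individual summands $h_1, h_2$ in the first implication: a priori a decomposition of a bounded function into two nonnegative pieces could have unbounded summands, but here nonnegativity of one piece together with the uniform bound on the sum immediately controls the other, which is exactly what legitimises the factor-wise reduction. One could alternatively read the $\ab = 0$ direction off Proposition \ref{pro:finlim}(iii), where Brownian motion converges deterministically to the glueing point; however, converting that into triviality of \emph{all} bounded harmonic functions would require identifying the geometric limit with the full Poisson boundary, so the decomposition argument is the more economical and self-contained route.
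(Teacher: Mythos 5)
Your proof is correct and takes essentially the same route as the paper: the forward direction combines Corollary \ref{cor:harmonic-sum} with Remark \ref{rmk:Liouville}, and the converse lifts a non-constant bounded harmonic function from the hyperbolic factor whose drift ($-\ab$ or $\ab$) is negative. The details you supply --- adding the constant $M$ to land in the positive cone, and bounding each summand via nonnegativity of the other --- are precisely what the paper's terse proof leaves implicit (and what it spells out later in the proof of Corollary \ref{cor:Poisson}).
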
 

\begin{proof} If $\ab =0$, then all bounded harmonic functions are constant by
Corollary \ref{cor:harmonic-sum} and Remark \ref{rmk:Liouville}.
Conversely, if $\ab \ne 0$, then again by
Remark \ref{rmk:Liouville}, one of  $\Lap^{\Hb(\pp)}_{\ab}$ and 
$\Lap^{\Hb(\qq)}_{-\ab}$ has non-constant bounded 
harmonic functions, and they lift to harmonic functions on $\Sol(\pp,\qq)$.
\end{proof}

The last corollary, which was obtained in a very concrete, case-specific
way, should be compared with the theorem of 
{\sc Karlsson and Ledrappier}~\cite{KaLe}, which says that (under very general
conditions) the weak Liouville property holds if and only if the rate of
escape of Brownian motion is $0$.

When $\ab \ne 0$, we have the following.

\begin{cor}\label{cor:Poisson} \emph{(i)} If $\ab > 0$ then every bounded
harmonic function for $\Lap^{\Sol(\pp,\qq)}_{\ab}$ has the form
$\;(x,y,z) \mapsto h_2(y,-z)\,$, where $h_2$ is a bounded harmonic function
for $\Lap^{\Hb(\qq)}_{-\ab}$.\\[5pt]
\emph{(ii)} If $\ab < 0$ then every bounded
harmonic function for $\Lap^{\Sol(\pp,\qq)}_{\ab}$ has the form
$\;(x,y,z) \mapsto h_1(x,z)\,$, where $h_1$ is a bounded harmonic function
for $\Lap^{\Hb(\pp)}_{\ab}$.
\end{cor}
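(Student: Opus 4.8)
The plan is to reduce everything to the additive decomposition already established in Corollary~\ref{cor:harmonic-sum}, combined with the one-dimensional Liouville dichotomy recorded in Remark~\ref{rmk:Liouville}. By the symmetry between the two factors (interchanging the $x$- and $y$-coordinates, swapping $\pp$ and $\qq$, and flipping the signs of $z$ and of $\ab$) it suffices to treat case (i), so assume $\ab > 0$. Let $h$ be a bounded harmonic function for $\Lap^{\Sol(\pp,\qq)}_{\ab}$. Since $h$ is bounded and constants are harmonic, for a large enough constant $C$ the function $h + C$ is a strictly positive element of $\Hc^+\bigl(\Lap^{\Sol(\pp,\qq)}_{\ab},0\bigr)$; as $0 > -\ab^2/2$, Corollary~\ref{cor:harmonic-sum} applies and yields nonnegative $h_1 \in \Hc^+\bigl(\Lap^{\Hb(\pp)}_{\ab},0\bigr)$ and $h_2 \in \Hc^+\bigl(\Lap^{\Hb(\qq)}_{-\ab},0\bigr)$ with
\[
h(x,y,z) + C = h_1(x,z) + h_2(y,-z)\,.
\]

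The first genuine step I would carry out is to check that each summand is separately bounded. Set $M = \sup|h| + C$, so that $h_1(x,z) + h_2(y,-z) \le M$ for all $(x,y,z) \in \Sol$. Fixing $(x,z)$ arbitrarily and specializing to the point $(x,0,z)$, nonnegativity of $h_2$ gives $h_1(x,z) \le h_1(x,z) + h_2(0,-z) \le M$; together with $h_1 \ge 0$ this shows that $h_1$ is a \emph{bounded} harmonic function on $\Hb(\pp)$ for $\Lap^{\Hb(\pp)}_{\ab}$. (Symmetrically $h_2$ is bounded, but only the bound on $h_1$ is needed.) Since $\ab > 0$, Remark~\ref{rmk:Liouville} tells us that $\Lap^{\Hb(\pp)}_{\ab}$ has the weak Liouville property, so $h_1 \equiv c_1$ is constant.

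It then remains only to reabsorb constants. From the displayed identity we obtain $h(x,y,z) = h_2(y,-z) + (c_1 - C)$; putting $\wt h_2 = h_2 + (c_1 - C)$ gives a bounded harmonic function for $\Lap^{\Hb(\qq)}_{-\ab}$ with $h(x,y,z) = \wt h_2(y,-z)$, as claimed. Case (ii) is the mirror image: when $\ab < 0$ the relevant drift $-\ab$ on $\Hb(\qq)$ is positive, so the same argument forces $h_2$ to be constant and exhibits $h$ as a function of $(x,z)$ only, harmonic for $\Lap^{\Hb(\pp)}_{\ab}$.

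I do not expect a substantive obstacle, since the structural work is already done by Corollary~\ref{cor:harmonic-sum}. The only point demanding a little care is the passage from boundedness of the \emph{sum} to boundedness of each \emph{summand}, which relies essentially on the nonnegativity of $h_1$ and $h_2$, together with the bookkeeping of the additive shift $C$ when translating between $h$ and its positive lift $h+C$.
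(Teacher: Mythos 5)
Your proposal is correct and follows essentially the same route as the paper's own proof: decompose via Corollary~\ref{cor:harmonic-sum} after reducing to a positive function, observe that each nonnegative summand is bounded, and invoke Remark~\ref{rmk:Liouville} to force the summand with favourable drift to be constant, which is then absorbed into the other. The only difference is that you spell out the bookkeeping (the shift by $C$ and the boundedness of each summand) that the paper leaves implicit.
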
 

\begin{proof} Let $h$ be a bounded harmonic function on $\Sol(\pp,\qq)$.
We may assume without loss of generality that it is non-negative. 
We decompose $h(x,y,z) = h_1(x,z) + h_2(y,-z)$ according to Corollary
\ref{cor:harmonic-sum}. Then both $h_1$ and $h_2$ are bounded harmonic.
When $\ab > 0$, Remark \ref{rmk:Liouville} tells us that $h_1$ must be constant, so
that we can ``incorporate'' it into $h_2$. Analogously, when $\ab < 0$, the function
$h_2$ must be constant.
\end{proof}

\begin{thm}\label{thm:main}
The minimal eigenfunctions in $\Hc^+\bigl(\Lap^{\Sol(\pp,\qq)}_{\ab},\la\bigr)$,
$\la \ge \la_{\min}$, are precisely the functions
$$
(x,y,z) \mapsto P_{\pp,\ab,\la}\bigl((x,z),\xi)
\AND (x,y,z) \mapsto P_{\qq,-\ab,\la}\bigl((y,-z),\eta)\,,\quad \xi,\eta \in \R\,,
$$
and in addition, when $\la = \la_{\min}$, the function
$$
(x,y,z) \mapsto e^{-\ab\, z}\,.
$$
\end{thm}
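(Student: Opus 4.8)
The plan is to pin down $\Mart_{\min}(\la)$ exactly by first narrowing the list of candidates and then deciding minimality of each. By Theorem \ref{thm:minimal} together with Lemma \ref{lem:transfer}, every minimal $h\in\Hc^+\bigl(\Lap^{\Sol(\pp,\qq)}_{\ab},\la\bigr)$ is the lift of one of the kernels $P_{\pp,\ab,\la}(\cdot,\xi)$ ($\xi\in\vartheta\Hb(\pp)$) or $P_{\qq,-\ab,\la}(\cdot,\eta)$ ($\eta\in\vartheta\Hb(\qq)$). The two kernels attached to the points at infinity depend only on $z$: writing $s=\sqrt{\ab^2+2\la}$, so that $\la_{\min}=-\ab^2/2$ and $s>0$ exactly when $\la>\la_{\min}$, the $\varpi_{\pp}$-kernel lifts to $e^{\alpha z}$ with $\alpha=s-\ab$, and the $\varpi_{\qq}$-kernel lifts to $e^{-(s+\ab)z}$. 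By Remark \ref{rem:exponential} only the first of these can be minimal on $\Sol$. It therefore remains to show (A) that the kernels with $\xi,\eta\in\R$ are genuinely minimal on $\Sol$, and (B) that $e^{\alpha z}$ is minimal precisely when $\la=\la_{\min}$, in which case $\alpha=-\ab$.

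For (A) I would argue extremality directly. Suppose $P_{\pp,\ab,\la}(\cdot,\xi_0)=f+g$ with $f,g\in\Hc^+(\Lap_{\ab},\la)$ and $\xi_0\in\R$. Applying Corollary \ref{cor:harmonic-sum} to $f$ and $g$ and summing the pieces gives $P_{\pp,\ab,\la}\bigl((x,z),\xi_0\bigr)=F_1(x,z)+F_2(y,-z)$ with $F_1\in\Hc^+(\Lap^{\Hb(\pp)}_{\ab},\la)$ and $F_2\in\Hc^+(\Lap^{\Hb(\qq)}_{-\ab},\la)$. Since the left side is independent of $y$, fixing $z$ forces $F_1$ and $F_2$ to depend on $z$ alone; the $z$-only positive eigenfunctions are the non-negative combinations of $e^{\alpha z}$ and $e^{-(s+\ab)z}$, both of which are also genuine $\Hb(\pp)$-eigenfunctions. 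Now $P_{\pp,\ab,\la}(\cdot,\xi_0)\ge F_2$, and $P_{\pp,\ab,\la}(\cdot,\xi_0)$ is minimal on $\Hb(\pp)$ by Lemma \ref{lem:transfer}; as $F_2$ is $z$-only while the kernel genuinely depends on $x$, minimality on $\Hb(\pp)$ forces $F_2\equiv0$. Then $f_2=g_2\equiv0$, so $f$ and $g$ are lifts of $\Hb(\pp)$-eigenfunctions summing to $P_{\pp,\ab,\la}(\cdot,\xi_0)$, and minimality on $\Hb(\pp)$ makes each a scalar multiple of it. The case $\eta\in\R$ is symmetric.

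For the non-minimality half of (B), when $\la>\la_{\min}$ I would exhibit an explicit continuous superposition. Writing $\beta_2=\beta(\la,-\ab,\qq)=\tfrac12+s/\qq$, the plan is to verify the identity
$$
e^{\alpha z}=\frac1C\int_{\R}P_{\qq,-\ab,\la}\bigl((y,-z),\eta\bigr)\,(\eta^2+1)^{-\beta_2}\,d\eta,\qquad C=\int_{\R}(v^2+1)^{-\beta_2}\,dv,
$$
by substituting the formula of Lemma \ref{lem:transfer}, translating $\eta\mapsto\eta-\qq y$ to remove the $y$-dependence, and rescaling by $e^{-\qq z}$. Splitting $\R$ into two sets of positive measure then writes $e^{\alpha z}$ as a sum of two positive eigenfunctions with genuine $y$-dependence (the cancellation to a $y$-independent function being special to the full line), so $e^{\alpha z}$ is not minimal. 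For the minimality half, at $\la=\la_{\min}$ one has $s=0$, both points at infinity collapse to the single function $e^{-\ab z}$, and $C$ diverges, so the above decomposition is unavailable; here I would repeat the extremality argument of (A). If $e^{-\ab z}=f+g$, the summed pieces $F_1,F_2$ are again $z$-only, and at $\la_{\min}$ the characteristic equation has the double root $-\ab$, so the only positive $z$-only eigenfunction on either plane is $e^{-\ab z}$ itself, which is moreover the minimal $\varpi$-kernel on each factor; domination then forces $f,g$ to be multiples of $e^{-\ab z}$.

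The main obstacle is the explicit integral in (B): everything hinges on the convergence of $C$, which holds exactly when $2\beta_2>1$, that is $s>0$, that is $\la>\la_{\min}$. This threshold is precisely what separates the non-minimal regime from the minimal one, and identifying the weight $(\eta^2+1)^{-\beta_2}$ that annihilates the $y$-dependence is the one genuinely computational step; the remaining minimality statements are soft extremality arguments resting on Corollary \ref{cor:harmonic-sum} and the elementary classification of $z$-only eigenfunctions.
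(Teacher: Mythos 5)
Your proof is correct, and although it shares the paper's skeleton (narrow the candidate list via Theorem \ref{thm:minimal}, Lemma \ref{lem:transfer} and Remark \ref{rem:exponential}, then decide minimality of each candidate), all three decision steps are carried out by genuinely different arguments. For the kernels with $\xi,\eta\in\R$, the paper assumes domination $P_{\pp,\ab,\la}(\cdot,\xi)\ge h$, splits $h=h_1+h_2$ by Corollary \ref{cor:harmonic-sum}, gets $h_1=c\,P_{\pp,\ab,\la}(\cdot,\xi)$ from minimality on $\Hb(\pp)$, and then kills $h_2$ by writing it as $\int P_{\qq,-\ab,\la}(\cdot,\eta)\,d\nu(\eta)$, setting $y=z=0$ and letting $x\to\infty$, where the decay of the kernel forces $\nu=0$; you instead exploit the functional equation $P_{\pp,\ab,\la}\bigl((x,z),\xi_0\bigr)=F_1(x,z)+F_2(y,-z)$ directly, where varying $y$ forces $F_2$ to be $z$-only, and minimality on $\Hb(\pp)$ together with the kernel's genuine $x$-dependence gives $F_2\equiv 0$. (One slip: you assert that $F_1$ \emph{and} $F_2$ are forced to be $z$-only; only $F_2$ is -- indeed $F_1=P-F_2$ still depends on $x$ -- but your argument uses only $F_2$, so nothing breaks.) For non-minimality of $e^{\alpha z}$ when $\la>\la_{\min}$, the paper argues softly via Lemma \ref{lem:lift-minimal} (minimality on $\Sol$ would make $(y,w)\mapsto e^{-\alpha w}$ minimal on $\Hb(\qq)$, contradicting Lemma \ref{lem:transfer}), whereas you exhibit the explicit superposition
\[
e^{\alpha z}=\frac1C\int_{\R}P_{\qq,-\ab,\la}\bigl((y,-z),\eta\bigr)\,(\eta^2+1)^{-\beta_2}\,d\eta\,,
\qquad C=\int_{\R}(v^2+1)^{-\beta_2}\,dv\,,
\]
which does check out: with $s=\sqrt{\ab^2+2\la}$ and $\beta_2=\tfrac12+s/\qq$, the substitutions $u=\eta-\qq y$ and $u=e^{-\qq z}v$ collapse the integral to $C\,e^{-(s+\ab)z}e^{2sz}=C\,e^{\alpha z}$, and $C<\infty$ exactly when $\la>\la_{\min}$. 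For minimality of $e^{-\ab z}$ at $\la_{\min}$, the paper conjugates by $e^{\ab z}$ and invokes the Liouville property (Corollary \ref{cor:Liouville}); you instead use that now \emph{both} summed pieces are forced to be $z$-only, that the characteristic equation $(\gamma+\ab)^2=0$ has a double root (so positive $z$-only eigenfunctions are multiples of $e^{-\ab z}$), and that $e^{-\ab z}$, resp.\ $e^{\ab w}$, is the minimal $\varpi$-kernel on each factor at the bottom eigenvalue, so domination of $f_1,f_2$ by these pieces finishes the argument. What each approach buys: the paper's second and third steps are shorter and softer; yours are more constructive and self-contained -- in particular your integral identity makes visible why the threshold for minimality of the vertical exponential sits exactly at $\la_{\min}$ (divergence of $C$), and your last step avoids the Liouville detour. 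Two points you leave implicit and should state: integrals of the kernels $P_{\qq,-\ab,\la}(\cdot,\eta)$ against finite Borel measures are again positive $\la$-eigenfunctions (needed so that the two halves of your split integral lie in $\Hc^+\bigl(\Lap^{\Sol(\pp,\qq)}_{\ab},\la\bigr)$), and each half genuinely depends on $y$ (e.g.\ at $z=0$ it tends to $0$ as $\qq y\to\mp\infty$), so neither half is proportional to $e^{\alpha z}$; both are routine.
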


\begin{proof}
Combining Theorem \ref{thm:minimal} with Lemma \ref{lem:transfer},
we see that each minimal $\la$-eigenfunction on $\Sol$ must be of the
form
$$
\begin{aligned}
(x,y,z) &\mapsto P_{\pp,\ab,\la}\bigl((x,z),\xi)\,,\quad\text{where}
\quad \xi \in \vartheta \Hb(\pp)\,,\quad \text{or}\\
(x,y,z) &\mapsto P_{\qq,-\ab,\la}\bigl((y,-z),\eta)\,,\quad\text{where}\quad
\eta \in \vartheta \Hb(\qq)\,.
\end{aligned}
$$
We have to show that for $\xi \ne \varpi_{\pp}$ and for 
$\eta \ne \varpi_{\qq}\,$, the respective functions are indeed all minimal. 
Furthermore, we have to show
that for $\xi = \varpi_{\pp}$ and for $\eta = \varpi_{\qq}\,$, the two 
resulting functions are \emph{not} minimal $\la$-eigenfunctions on $\Sol$,
unless $\la = \la_{\min}\,$. In this last case both coincide and are equal
to $e^{-\ab\, z}$.  

\smallskip

So first we show minimality of 
$(x,y,z) \mapsto P_{\pp,\ab,\la}\bigl((x,z),\xi)$ with 
$\xi \in \vartheta^* \Hb(\pp)$.
Suppose that $P_{\pp,\ab,\la}\bigl((x,z),\xi) \ge h(x,y,z)$ for all
$\zf = (x,y,z)$, where $h \in \Hc^+\bigl(\Lap^{\Sol(\pp,\qq)}_{\ab},\la\bigr)$.

We decompose $h(x,y,z) = h_1(x,z) + h_2(y,-z)$ according to Corollary 
\ref{cor:harmonic-sum}. By minimality of 
$P_{\pp,\ab,\la}\bigl((\cdot,\cdot),\xi)$ 
in $\Hc^+\bigl(\Lap^{\Hb(\pp)}_{\ab},\la\bigr)$ (Lemma \ref{lem:transfer}),
we must have $h_1 = c \cdot P_{\pp,\ab,\la}\bigl((\cdot,\cdot),\xi)$, 
where $0 \le c \le 1$. If $c=1$, we are done.
If $c < 1$ then we get 
$$
P_{\pp,\ab,\la}\bigl((x,z),\xi\bigr) \ge \frac{1}{1-c} h_2(y,-z) 
= \int_{\vartheta \Hb(\qq)} P_{\qq,-\ab,\la}\bigl((x,-z),\eta) \, d\nu(\eta)
\quad \text{for all}\; (x,y,z) \in \Sol\,,
$$
where $\nu$ is a Borel measure on $\vartheta \Hb(\qq)$. Setting $y=z=0$,
we get 
$$
P_{\pp,\ab,\la}\bigl((x,0),\xi\bigr) \ge \nu\bigl(\vartheta \Hb(\qq)\bigr)
\quad \text{for all}\; x \in \R\,.
$$
If $x \to \infty$, then we see from the formula for $P_{\qq,-\ab,\la}$
of Lemma \ref{lem:transfer} that the left hand side in the last inequality
tends to $0$. Therefore $\nu\bigl(\vartheta \Hb(\qq)\bigr) = 0$, whence
$h_2 \equiv 0$, contradicting the assumption that $c < 1$.

\smallskip

The proof of minimality of $(x,y,z) \mapsto P_{\qq,-\ab,\la}\bigl((x,-z),\eta)$,
where $\eta \in \vartheta^* \Hb(\qq)$, follows as usual by exchanging the 
roles of the $x$- and $y$-variables.

\smallskip

Next, let $\xi = \varpi_{\pp}$ and $\la > \la_{min}$, 
so that we are considering the function
$$
(x,y,z) \mapsto P_{\pp,\ab,\la}\bigl((x,z),\varpi_{\pp}) =
e^{\alpha(\la,\ab)z}\,.
$$
If it were minimal in $\Hc^+\bigl(\Lap^{\Sol(\pp,\qq)}_{\ab},\la\bigr)$,
then by Lemma \ref{lem:lift-minimal}, also the function 
$(y,z) \mapsto e^{-\alpha(\la,\ab)z}$ would have to be minimal in  
$\Hc^+\bigl(\Lap^{\Hb(\qq)}_{-\ab},\la\bigr)$, which is not the case
by Lemma \ref{lem:transfer}.

Analogously, when $\la > \la_{min}$, 
the function
$$
(x,y,z) \mapsto P_{\qq,-\ab,\la}\bigl((y,-z),\varpi_{\qq})
e^{-\alpha(\la,-\ab)z}
$$
cannot be minimal in $\Hc^+\bigl(\Lap^{\Sol(\pp,\qq)}_{\ab},\la\bigr)$.

\smallskip 

Finally, consider the case $\la = \la_{min}$ and the function 
$(x,y,z) \mapsto e^{-\ab z}$
in $\Hc^+\bigl(\Lap^{\Sol(\pp,\qq)}_{\ab},\la\bigr)$.
We use a well-known trick, conjugating our operator with this exponential:
suppose that $e^{-\ab z} \ge h(x,y,z)$, where 
$h \in \Hc^+\bigl(\Lap^{\Sol(\pp,\qq)}_{\ab},\la\bigr)$.
Then a straightforward computation shows that
the function $\tilde h(x,y,z) = e^{\ab z}h(x,y,z)$ is in 
$\Hc^+\bigl(\Lap^{\Sol(\pp,\qq)}_{0},0\bigr)$, that is, it is bounded harmonic,
and the new rate of escape is $0$. By Corollary \ref{cor:Liouville},
$\tilde h$ is constant. This proves minimality of 
$(x,y,z) \mapsto e^{-\ab z}$
in $\Hc^+\bigl(\Lap^{\Sol(\pp,\qq)}_{\ab},\la\bigr)$.
\end{proof}

Our results tell us that the \emph{Poisson boundary} of Brownian motion with
drift on $\Sol$ is the ``outer'' boundary  
$$
\Bigl(\vartheta^* \Hb(\pp) \times \{\varpi_{\qq}\}\Bigr) \cup
\Bigl(\{\varpi_{\pp}\} \times \vartheta^* \Hb(\qq)\Bigr) \cup
\Bigl\{ (\varpi_{\pp}\,,\varpi_{\qq})\Bigr\}
$$
together with the limit distribution provided by Proposition \ref{pro:finlim}.
Indeed, for $\ab < 0$, it is just the first of these three pieces, because
the limit distributition is supported by that piece. For $\ab > 0$,
it is just the second piece, and for $\ab =0$, it is trivial, i.e., the singleton
of the third piece.
Here, we do not go into details regarding the construction of the Poisson boundary.
(In short, it is the largest probability space that gives rise to an integral
representation of all bounded harmonic functions and at the same time provides
a model for the limit behavior of the process at infinity.) The reader is referred
to the body of work of {\sc Kaimanovich}, e.g. \cite{Kai}. 

Regarding the Martin boundary (which is a metric space, while the Poisson boundary
is a measure space), our results underline the evidence that $\Mart(\la_{\min})$
is the boundary in the geometric compactification that we have described
in \S \ref{sect:potential}, while for $\la > \la_{\min}$ it should be bigger: one first
should consider the \emph{horocyclic compactification} of $\Hb(\pp)$, which 
can be built from the usual one as follows. Replace the boundary point $\varpi_\pp$
by the set $\{\varpi_{\pp}^{\zeta} : \zeta \in [-\infty\,,\,\infty]\}$, which
carries the topology of the extended real line. Furthermore,
modify the topology by saying that in the new compactification, 
$(x,z) \to \varpi_{\pp}^{\zeta}$ if $|x| \to \infty$ and $z \to \zeta$.
Then we expect that the Martin compactification of $\Sol(\pp,\qq)$ for $\la >
\la_{\min}$ is the closure of $\Sol$ in the direct product of the horocyclic
compactifications of the two hyperbolic planes. This evidence comes from the
strong analogy with the $\DL$-graphs (the horocyclic product of two 
homogeneous trees), see \cite{BrWo1}; the rigorous proof still has to be 
carried out.

\end{document}